\documentclass{amsart}
\usepackage{amsfonts,amsmath,latexsym,amssymb,verbatim,amsbsy,times, esint}
\usepackage{amsthm}%, pdfsync}
\usepackage{pstricks}
\usepackage{mathtools}
\usepackage{graphicx}
\usepackage{caption}
\usepackage{mathrsfs}
\usepackage{bm}

\usepackage{tikz-cd}

\usepackage{tikz}
\usetikzlibrary{patterns, arrows.meta, decorations.pathreplacing}
\usepackage{mathrsfs}

\usepackage[top=0.8 in, bottom=0.8in, left=0.8in, right=0.8in]{geometry}

\usepackage[colorlinks=true, pdfstartview=FitV, linkcolor=blue,citecolor=green, urlcolor=blue]{hyperref}

\usepackage{enumitem}

\theoremstyle{plain}
\newtheorem{THEOREM}{Theorem}[section]
\newtheorem{COROL}[THEOREM]{Corollary}
\newtheorem{LEMMA}[THEOREM]{Lemma}
\newtheorem{PROP}[THEOREM]{Proposition}

\theoremstyle{definition}
\newtheorem{DEF}[THEOREM]{Definition}

\theoremstyle{remark}
\newtheorem{REMARK}[THEOREM]{Remark}

\newcommand{\N}{\ensuremath{\mathbb{N}}}   %%% naturals
\newcommand{\Z}{\ensuremath{\mathbb{Z}}}   %%% integers
\newcommand{\R}{\ensuremath{\mathbb{R}}}   %%% reals
\def \RR {\mathbb{R}}

\def \e {\varepsilon}

\def \n {\nabla}

\def \bpi {\boldsymbol{\pi}}

\def \bu {{\bf u}}

\def \cA {\mathcal{A}}

\def \cD {\mathcal{D}}

\def\cF {\mathcal {F}}

\def \cJ {\mathcal{J}}

\def \cM {\mathcal{M}}
\def \cN {\mathcal{N}}

\def \cP {\mathcal{P}}
\def \cQ {\mathcal{Q}}

\def\cprime{$'$}

\def \loc {\mathrm{loc}}

\newcommand{\tnu}{\widetilde{\nu}}

\def \p {\partial}

\newcommand{\sgn}{\mathrm{sgn}}

\DeclareMathOperator{\diam}{diam} %
\DeclareMathOperator{\supp}{supp} %

\def \cP {\mathcal{P}}

\def \Lip {\mathrm{Lip}}

\def \dd   {\mathrm{d}}

\def \dh  {\mathrm{d}h}

\def \dt  {\mathrm{d}t}
\def \dtau  {\mathrm{d}\tau}

\def \dx  {\mathrm{d}x}
\def \dy  {\mathrm{d}y}
\def \dz  {\mathrm{d}z}

\def \dm  {\mathrm{d}m}

\usetikzlibrary{cd}
\usetikzlibrary{snakes}

	\title{Unidirectional Entropic Solutions of the Pressureless Euler Alignment System}

	\author{Joshua O. Adeleke}
	\address{Department of Applied Mathematics,
Illinois Institute of Technology}
\email{jadeleke@hawk.illinoistech.edu}

\author{Trevor M. Leslie}
\address{Department of Applied Mathematics,
Illinois Institute of Technology}
\email{tleslie@illinoistech.edu}

\date{}

\thanks{\textit{Acknowledgment.} JA and TL are supported in part by NSF grant DMS-2408585 (PI: Leslie). Both authors thank Changhui Tan for helpful comments on an earlier draft of the manuscript.}

\makeatletter
\@namedef{subjclassname@2020}{\textup{2020} Mathematics Subject Classification}
\makeatother
\subjclass[2020]{35Q35, 35L65, 35D30, 35B30, 76N10, 82C22}

\keywords{Euler Alignment, Cucker--Smale, collective dynamics, scalar balance law, weak solutions, entropy conditions,  sticky particle dynamics, flocking}
\begin{document}

\maketitle

\vspace{-10 mm}
\begin{abstract}
	We study the pressureless Euler Alignment system with unidirectional velocity $\bu = (u,0, \ldots, 0)$.  By re-casting the system as a family of coupled scalar balance laws---one for each horizontal slice of $\mathbb{R}^d$---we are able to prove existence, uniqueness, and stability within the class of unidirectional solutions, under the assumption of a bounded Lipschitz communication protocol.  The nonlocal coupling between horizontal slices provides the system with a rich structure that is absent from the 1D setting and also constitutes the main technical difficulty of the present work.  We construct our solutions as limits of sticky particle Cucker–Smale dynamics, discretizing first transverse to the flow and then along it. The transverse discretization depends  crucially on the more subtle of our two complementary stability estimates, which relies only on $L^1$-$L^\infty$ control of the flux difference.  This low-regularity estimate is essential since our discretized fluxes cannot in general be expected to converge in (for instance) Lipschitz seminorm.  The estimate itself is inspired by work of Bouchut and Perthame~\cite{BouchutPerthame1998} and adapted here through a careful additional analysis of the nonlocality.  In order to meaningfully compare the dynamics along different horizontal slices, both stability estimates are most naturally formulated in terms of quantities that involve the optimal coupling between the projections of two density profiles onto~$\mathbb{R}^{d-1}$.
	
	We also investigate the long-time behavior of unidirectional solutions.  In addition to treating the standard heavy-tailed regime, we make the simple observation that the unidirectional geometry allows for flocking (with a rate independent of the number of agents) even when the communication protocol vanishes in a cylindrical neighborhood of the axis parallel to the flow.  This demonstrates that direct communication along the direction of motion is not necessary for flocking to occur.
\end{abstract}

\vspace{-2 mm}
\begin{small} 
	\tableofcontents
\end{small} 

\section{Introduction}

We are concerned with the following Euler Alignment system of collective behavior:
\begin{equation}
	\label{e:EA}
	\left\{ 
	\begin{split}
		\partial_t \rho + \nabla \cdot (\rho \mathbf{u}) & = 0 \\
		\partial_t (\rho \bu) + \nabla \cdot (\rho \bu \otimes \bu) 
		& = \int_{\R^d} \phi(\bm{x} - \bm{\xi}) (\mathbf{u}(\bm{\xi}) - \mathbf{u}(\bm{x})) \rho(\bm{x})\rho(\bm{\xi}) \,\dd \bm{\xi}
	\end{split}
	\right.
	\qquad \qquad 
	\bm{x}\in \mathbb{R}^d, \,t>0.
\end{equation}
In this system, $\rho = \rho(\bm{x},t)$ and $\bu = \bu(\bm{x},t)$ denote the macroscopic density and velocity of a continuum of agents, which interact through the nonnegative \textit{communication protocol} $\phi:\mathbb{R}^d\to \mathbb{R}$.  This paper will focus on the case of \textit{unidirectional} dynamics, where all velocities point in the same direction.  Without loss of generality, we write $\bu(\bm{x},t) = (u(\bm{x},t), 0, \ldots, 0)$ in this case, where $u(\bm{x},t)$ is real-valued.  The unidirectional dynamics are then governed by the following system.  (Here and below, we use $\partial_1$ to denote differentiation with respect to the first spatial variable, and we write $\bm{x} = (x,y)\in \mathbb{R}\times \R^{d-1}$.)
\begin{equation}
	\label{e:EAuni}
	\left\{ 
	\begin{split}
		\partial_t \rho + \partial_1 (\rho u) & = 0 \\
		\partial_t (\rho u) + \partial_1 (\rho u^2) & = \int_{\mathbb{R}\times \mathbb{R}^{d-1}} \phi(x-\xi, y - \eta) (u(\xi, \eta) - u(x,y)) \rho(x,y) \rho(\xi, \eta) \,\dd\xi\,\dd\eta,
	\end{split}
	\quad (x,y)\in \mathbb{R}\times \mathbb{R}^{d-1},\,t>0. 
	\right. 
\end{equation}
We are concerned with the existence, uniqueness, and stability of weak solutions of \eqref{e:EAuni}, as well as the long-time behavior of these weak solutions (and of solutions of the related Cucker--Smale system of ODEs, introduced below).  We will make two key assumptions on our communication protocol for the entire manuscript.  First, we will assume that $\phi=\phi(x,y)$ is even in both its arguments:
\begin{equation} 
	\label{e:phieven}
	\phi(x,y) = \phi(-x,y) = \phi(x,-y),
	\qquad \text{ for all } x\in \mathbb{R}, \, y\in \mathbb{R}^{d-1}.
\end{equation}
And second, we will assume that $\phi$ is bounded and Lipschitz in its second argument:
\begin{equation}
	\label{e:phiLip2}
	\sup_{(x,y)\in \mathbb{R}\times \mathbb{R}^{d-1}} \phi(x,y) + \sup_{\substack{x\in \mathbb{R},\, y_1, y_2\in \mathbb{R}^{d-1}\\y_1 \ne y_2}} \frac{|\phi(x,y_1) - \phi(x,y_2)|}{|y_1 - y_2|}  < +\infty.  
\end{equation}
In particular, we will \textit{not} analyze Euler Alignment dynamics equipped with a strongly singular communication protocol---this requires different tools.  We will remark later on about the extent to which assumption \eqref{e:phiLip2} might be able to be weakened.

Finally, we will always normalize $\rho$ to have mass~$1$:
\begin{equation}
	\int_{\mathbb{R}^d} \rho = 1.
\end{equation}

\subsection{Cucker--Smale-type systems and flocking dynamics}

The Euler Alignment system \eqref{e:EA} arises as a hydrodynamic limit of the celebrated Cucker--Smale system of ODE's \cite{CS2007a, CS2007b}:
\begin{equation}
	\label{e:CS}
	\left\{ 
	\begin{split} 
		\dot{\bm{x}}_i & = \bm{v}_i,\\
		\dot{\bm{v}}_i & = \sum_{\substack{j=1 \\ j\ne i}}^N m_j\phi(\bm{x}_i - \bm{x}_j)(\bm{v}_j - \bm{v}_i).
	\end{split}
	\right.
\end{equation}
The Cucker--Smale system as written above consists of $N$ agents, each associated to a triple $(m_i, \bm{x}_i, \bm{v}_i)$ indicating its mass $m_i\ge 0$ (fixed in time), position $\bm{x}_i = \bm{x}_i(t)\in \mathbb{R}^d$, and velocity $\bm{v}_i = \bm{v}_i(t)\in \mathbb{R}^d$.  When the number of agents is very large, the use of \eqref{e:EA} becomes appropriate, and $\rho = \rho(\bm{x},t)$ and $\bu = \bu(\bm{x},t)$ denote the corresponding macroscopic quantities. Often one passes through a kinetic equation (not discussed here), see for example \cite{HT2008, HaLiu2008}, and then passes from the kinetic equation to \eqref{e:EA}, see \cite{ShvydkoyBook,  KangVasseur2015, FigalliKang2019}, and also \cite{KMT2013, KMT2014, KMT2015} for the kinetic-to-hydrodynamic limit that includes a pressure term in the limiting system.  In the framework developed below, we will circumvent the kinetic level and pass directly from~\eqref{e:CS} (equipped with collision rules) to \eqref{e:EA}.

One of the key features of Cucker--Smale-type systems is their compatibility with so-called `flocking dynamics,' where the velocities of all agents align in the time-asymptotic limit, and the diameter of the positions of the agents remains uniformly bounded for all time.  More specifically, we say that \textit{flocking} occurs when
\begin{equation}
	\label{e:flockingdef}
	\sup_{t\ge 0} \mathscr{D}(t) \le \overline{\mathscr{D}}<+\infty,
	\qquad 
	\mathscr{D}(t) = \max_{i,j}|\bm{x}_i(t) - \bm{x}_j(t)|,
\end{equation} 
while \textit{velocity alignment} refers to the situation where 
\begin{equation}
	\label{e:velocityalignmentdef}
	\lim_{t\to \infty} \mathscr{A}(t) = 0,
	\qquad \mathscr{A}(t) = \max_{i,j}|\bm{v}_i(t) - \bm{v}_j(t)|.
\end{equation}
These definitions have natural counterparts in the hydrodynamic setting, which we will discuss later on.  The behavior captured by \eqref{e:flockingdef} and \eqref{e:velocityalignmentdef} mimics---in a simplistic but analytically tractable way---the alignment dynamics of naturally occurring many-agent systems in the real-world, the prototypical example being a flock of birds.  At the level of the equations, the driving mechanism appears in the `alignment forcing' on the right side of equations \eqref{e:EA}$_2$ and \eqref{e:CS}$_2$.  In \eqref{e:CS}, for example, each term in the sum works to bring the velocity of agent~$i$ closer to that of agent~$j$, and the contribution of each term to the sum depends on the difference $\bm{v}_j(t) - \bm{v}_i(t)$ in velocities, the mass $m_j$ of agent~$j$, and the strength $\phi(\bm{x}_i(t)- \bm{x}_j(t))$ of the communication between agents $i$ and $j$.  The emergence of a global structure (in this case, velocity alignment and flocking) from these possibly small-scale, pairwise interactions, is a complex phenomenon that provides a rich field of study.  In particular, understanding exactly which initial configurations and which communication protocols will lead to flocking behavior is a subject of much active research and recent progress.  We make no attempt to give a comprehensive overview of the by now expansive literature on the subject, but we refer the reader to the survey papers \cite{MT2014, CCP2017, Tadmor2021review, ShvydkoySurvey}, the book \cite{ShvydkoyBook}, and references therein, for a more thorough discussion of the state of the art.

\subsection{The main focus: wellposedness for unidirectional solutions}
Naturally, a prerequisite for the emergence of flocking in a solution of~\eqref{e:EA} or~\eqref{e:CS} is the global-in-time existence of that solution.  For the ODE system~\eqref{e:CS}, existence and uniqueness of solutions is either trivial or well-understood under mild assumptions on~$\phi$.  But wellposedness of the Euler Alignment system~\eqref{e:EA} remains open except in some important special cases.  The most complete theory is available when~$d=1$ and additionally~$\phi$ is even and locally integrable.  
In this case, one can predict from the initial data whether the solution will remain regular for all time or lose regularity in finite time \cite{TT2014,CCTT2016,tan2020euler,L2019CTC}.  Furthermore, if the solution loses regularity in finite time, one can still continue it as a weak solution \cite{LeslieTan2023,HaHuangWang2014wk, Galtung2025}, and it is even possible to say a fair amount about the large-time structure of the density profile  \cite{LeslieTan2024, Galtung2025}. In what follows, we will be concerned exclusively with the above-mentioned case where $\phi$ is locally integrable (and in fact satisfies \eqref{e:phiLip2}); however, a relevant parallel line of research concerns the case where $\phi$ is instead strongly singular near the origin; see for example \cite{KT,DKRT,ShvydkoyTadmorI,ShvydkoyTadmorII,ShvydkoyTadmorIII, Leslie2019, ArnaizCastro2019, DanchinMuchaPeszekWroblewski2018}. 

Most of the literature on the 1D case leans heavily on the presence of an additional conserved quantity and also on the ordering property of the real line---properties that both disappear in higher dimensions.  Largely for this reason, wellposedness remains much less well-understood in general than it is on~$\mathbb{R}$ (but see for example  \cite{Shvydkoy2018NearlyAligned, tan2021eulerian, HT2016} for some partial results). There is, however, a class of dynamics for \eqref{e:EA} that is considerably richer than the pure 1D case but still comes with many of the technical advantages of the latter.  We are referring to the so-called \textit{unidirectional} setting already mentioned above, in which $\mathbf{u}(\bm{x},t) =  (u(\bm{x},t),0,\ldots, 0)$ for some scalar-valued function~$u$.  In this setting, described by the system \eqref{e:EAuni}, particle trajectories are constrained to move along straight lines (preserving the ordering property from the 1D setting), and a special conserved quantity is still available.  Therefore, for some purposes, the unidirectional initial value problem can be treated like a family of 1D problems.  On the other hand, these 1D problems are all coupled together through the nonlocal communication, which causes considerable difficulty in understanding the regularity in the directions transverse to the flow and also allows for a fundamentally higher-dimensional richness in the system's emergent structures.   

For bounded, Lipschitz, radial $\phi$, the existence of classical unidirectional solutions for `sub-critical' initial data was first proven in \cite{LearShvydkoy2019}.  (See also \cite{Lear2021unising,LearShvydkoy2021, LiMiaoTanXue2024} for results in the strongly singular protocol regime.) As in the 1D case, solutions starting from `supercritical' initial data may lose regularity in finite time.  However, unlike the 1D case, there is at present no theory for continuing such a solution beyond the blowup time.  The main objective of this paper is to develop such a theory, and more generally, to establish the existence, uniqueness, and stability of  unidirectional weak solutions of \eqref{e:EA}.

\subsection{The scalar balance law formulation}
The key observation that allowed for the development of the 1D weak solution theory in  \cite{LeslieTan2023} was the reduction of the two-equation Euler Alignment system to a single scalar balance law, in the spirit of Brenier and Grenier's reduction of the pressureless Euler equations to a single scalar conservation law \cite{brenier1998sticky}.  A similar reduction is available in the unidirectional setting; however, the result in this case is instead a \textit{family} of coupled scalar balance laws parametrized by $y$, and one must decompose the density profile properly in order to obtain it.  We give a quick, formal derivation below (which mostly parallels the one in \cite{LeslieTan2023}); a rigorous justification will be provided in Section \ref{s:Recovery}.

We start by noting that \eqref{e:EAuni} can be rewritten in the form 
\begin{equation}
	\label{e:EAunipsi}
	\left\{ 
	\begin{split}
		\partial_t \rho + \partial_1 (\rho u) & = 0 \\
		\partial_t (\rho\psi) + \partial_1 (\rho\psi u) & = 0,
	\end{split}
	\right. 
	\qquad (x,y)\in \mathbb{R}\times \mathbb{R}^{d-1},\,t>0,
\end{equation}
where 
\begin{equation}
	\psi = u + \Phi*\rho,    
\end{equation}
\begin{equation}
	\Phi(x,y) = \int_0^x \phi(\xi, y)\, \dd\xi,    
\end{equation}
and $*$ represents convolution in the spatial variables:
\begin{equation}
	(\Phi*\rho)(x,y,t) = \int_{\mathbb{R}^{d-1}} \int_{\mathbb{R}} \Phi(x - \xi, y - \eta) \rho(\xi, \eta,t) \,\dd\xi\,\dd\eta.
\end{equation}

Next, we consider the density profile as a probability measure on $\mathbb{R}^d$ and disintegrate it with respect to horizontal slices.  Letting $p_2:\mathbb{R}\times \mathbb{R}^{d-1}\to \mathbb{R}^{d-1}$ denote projection onto the second argument, i.e., $p_2(x,y) = y$, we define 
\[
\nu = (p_2)_{\sharp} \rho(t)
\]
(note that unidirectionality guarantees that $\nu$ is time-independent), and then 
\[
\rho(\dx,\dy,t) = \rho_y(\dx,t) \,\nu(\dy),
\]
where $\rho_y(t)$ is a probability measure on $\mathbb{R}$ for each $t$ and $\nu$-a.e. $y$.  More specifically, the disintegration means that 
\[
\int_{\mathbb{R}^{d-1}\times \mathbb{R}} f(x,y) \rho(\dx, \dy,t) = 
\int_{\mathbb{R}^{d-1}} \bigg( \int_{\mathbb{R}} f(x,y) \rho_y(\dx,t)\bigg) \nu(\dy),
\]
for any bounded Borel measurable function $f:\mathbb{R}\times \mathbb{R}^{d-1}\to \mathbb{R}$.  We will refer to $\nu$ as the `transverse measure' of the system. 
We may therefore define 
\[
M(x,y,t) = \rho_y((-\infty, x],t),
\qquad 
Q(x,y,t) = \int_{(-\infty,x]} \psi(x,y,t) \rho_y(\dx,t).
\]
Integrating the system \eqref{e:EAunipsi} yields
\begin{equation} 
	\label{e:primitive}
	\left\{ 
	\begin{split}
		\partial_t M + u \partial_1 M & = 0 \\
		\partial_t Q + u \partial_1 Q & = 0.
	\end{split}
	\right. 
\end{equation}
Given initial data $(M^0, Q^0)$ for this system (with $M^0$ nondecreasing), we may choose a function $A:[0,1]\times \mathbb{R}^{d-1}\to \mathbb{R}$ such that $A(M^0(x,y),y) = Q^0(x,y)$.  (This choice is unique if $M^0$ is continuous.) For smooth solutions of \eqref{e:primitive}, this relationship propagates in time: $A(M(x,y,t),y) = Q(x,y,t)$, so that 
\[
\partial_t M + \partial_x (A(M,y)) = -u\partial_1 M + \partial_1 Q = (\psi - u)\partial_1 M = B[\nu,M] \partial_1 M,
\]
where $B[\nu,M]$ is equal to $\Phi*\rho$ but is expressed in terms of $\nu$ and $M$.  Altogether, our system is:
\begin{equation} 
	\label{e:ScalarBalance}
	\left\{ 
	\begin{split}
		& \partial_t M(x,y,t) + \partial_x(A(M(x,y,t),y)) = B[\nu, M] \partial_x M(x,y,t), \\
		& B[\nu, M](x,y,t) = (\Phi*(\nu \partial_1 M))(x,y,t) = \int_{\mathbb{R}^{d-1}} \int_\mathbb{R} \Phi(x-\xi, y-\eta) \,\partial_x M(\dd\xi, \eta,t)\,\nu(\dd\eta).
	\end{split} 
	\right. 
\end{equation}

\subsection{Entropy solutions}
The system \eqref{e:ScalarBalance} admits a natural entropy condition in the sense of Kruzkov. We require
\begin{equation}
	\label{e:entropydistributional}
	\partial_t |M-\alpha| \nu  + \partial_x \big( \sgn(M-\alpha)(A(M,y) - A(\alpha, y))\big) \nu \le B[\nu, M]\,\partial_x|M-\alpha| \nu,
	\qquad \text{ for all } \alpha\in \mathbb{R},
\end{equation}
in the sense of distributions on $\mathbb{R}\times \mathbb{R}^{d-1}\times (0,T)$.  More explicitly, we require that for every $\alpha\in \mathbb{R}$ and every nonnegative $\zeta\in C^\infty_c(\mathbb{R}\times \mathbb{R}^{d-1}\times (0,T))$, the function $M$ should satisfy
\begin{equation}\label{eq:entropy-M}
	\int_0^T\int_{\mathbb{R}^{d-1}}\int_{\mathbb{R}}
	\Big[
	|M-\alpha|\,\p_t\zeta
	+ \sgn(M-\alpha)\big(A(M,y)-A(\alpha,y)\big)\,\p_x\zeta
	+\zeta\,B[\nu, M]\,\p_x|M-\alpha|
	\Big]\dx\,\nu(dy)\,\dt\ge0.
\end{equation}
Note that by taking $\zeta(x,y,t) = \widetilde{\zeta}(x,t)b_\e(y)$, where $b_\e$ is a standard mollifier, and taking $\e\to 0+$, we can reduce \eqref{eq:entropy-M} to the condition that for $\nu$-a.e.  $y$, we have 
\begin{equation}\label{eq:entropy-M-alt}
	\int_0^T\int_\RR
	\Big[
	|M-\alpha|\,\p_t\widetilde{\zeta}
	+ \sgn(M-\alpha)\big(A(M,y)-A(\alpha,y)\big)\,\p_x\widetilde{\zeta}
	+\widetilde{\zeta}\,B[\nu, M]\,\p_x|M-\alpha|
	\Big]\dx\,\dt\ge0,
\end{equation}
for every nonnegative $\widetilde{\zeta}\in C_c^\infty(\RR\times(0,T))$.  In what follows, it will be useful to have access to both \eqref{eq:entropy-M} and \eqref{eq:entropy-M-alt}.

\medskip
We formalize the above in the following definitions.
\begin{DEF} 
	\label{def:SBIC}
	We say that the triple $(M^0, A, \nu)$ of initial data, flux, and transverse measure is \textit{admissible} for the system \eqref{e:ScalarBalance} if the following conditions are satisfied.  
	\begin{itemize}
		\item $\nu$ is a compactly supported probability measure on $\mathbb{R}^{d-1}$.
		\item The function $M^0:\mathbb{R}\times \mathbb{R}^{d-1}\to \R$ is Borel measurable.  Furthermore, there exists $R^0>0$ such that for $\nu$-a.e. $y$, we have that $M^0(x,y)=0$ for $x<-R^0$ and $M^0(x,y)=1$ for $x>R^0$.  Finally, the function $x\mapsto M^0(x,y)$ is nondecreasing on $\mathbb{R}$ for $\nu$-a.e. $y$. 
		\item The function $A:[0,1]\times \mathbb{R}^{d-1}\to \mathbb{R}$ is Borel measurable, and normalized so that $A(0,y) = 0$ for $\nu$-a.e. $y\in \mathbb{R}^{d-1}$.  Furthermore, the Lipschitz seminorm of the map $m\mapsto A(m,y)$ is uniformly bounded in $y$.
	\end{itemize}
\end{DEF} 

\begin{DEF}
	\label{def:entropysoln}
	Fix $T>0$.  We say that a function $M\in C([0,T); L^1_{\loc}(\mathbb{R}\times \mathbb{R}^{d-1}, \dx \otimes \nu))$ is an \textit{entropy solution} of~\eqref{e:ScalarBalance} associated to the admissible triple $(M^0, A, \nu)$ if the following conditions are satisfied. 
	\begin{enumerate}[label = (\alph*)]
		\item There exists $R(T)>0$ such that for all $t\in [0,T)$ and $\nu$-a.e. $y\in \mathbb{R}^{d-1}$, we have that $M(x,y,t)=0$ for $x<-R(T)$ and $M(x,y,t) = 1$ for $x>R(T)$.  Furthermore, the function $x\mapsto M(x,y,t)$ is nondecreasing on $\mathbb{R}$ for $\nu$-a.e. $y\in \mathbb{R}^{d-1}$ and for all $t\in [0,T)$.
		\item $M$ satisfies \eqref{e:entropydistributional} in the sense of distributions on $\mathbb{R}\times \mathbb{R}^{d-1} \times (0,T)$.
		\item We have 
		\begin{equation}
			\label{e:MIC}
			\lim_{t\to 0+} \|M(\cdot, y,t) - M^0(\cdot, y)\|_{L^1(\mathbb{R}\times \mathbb{R}^{d-1}, \dx \otimes \nu)} = 0.    
		\end{equation}
	\end{enumerate}   
\end{DEF}

\begin{REMARK}
	In light of the requirement \eqref{e:MIC}, we will sometimes use $M(x,y,0)$ (implicitly or explicitly) to mean  $M^0(x,y)$.  This will occasionally save us from writing down separate notation for quantities defined in terms of the initial conditions.  
\end{REMARK}

\subsection{Analytical framework and preliminary statement of results}

The analysis of the present work begins essentially where the 1D theory of \cite{LeslieTan2023} leaves off.  Most of the core results of the latter can be thought of as proving the existence, uniqueness, and stability of entropy solutions of \eqref{e:ScalarBalance} in the special case where $\nu$ is the Dirac mass at a single point in $\mathbb{R}^{d-1}$.  In fact, as discussed in Section \ref{ss:Existdisc} below, this theory can be extended without too much trouble to the case where $\nu$ is any fixed, atomic measure.

It is natural, then, to try to construct entropy solutions of \eqref{e:ScalarBalance} as the limit of a sequence of solutions $M^k(x,y,t)$ associated to admissible triples $(M^{0,k}, A^k, \nu_k)_{k=1}^\infty$, where $\nu_k$ is atomic---indeed, this is more or less what we ultimately do in the present work.  However, meaningfully measuring the difference between two entropy solutions $M$, $\widetilde{M}$ associated to distinct transverse measures $\nu$, $\widetilde{\nu}$ (which may not even have the same support in $\mathbb{R}^{d-1}$) requires us to be able to compare values of $M$ and $\widetilde{M}$ associated to different horizontal slices.  To resolve this difficulty, we embed the optimal coupling between two such measures (with respect to the cost $c(y,\widetilde{y}) = |y-\widetilde{y}|$), which we denote by $\bpi(\dy, \dd\widetilde{y})$, across our analytical framework.  Our stability results, for instance, are statements about the quantity $\int_{\mathbb{R}^{d-1}} \|M(\cdot, y, t) - \widetilde{M}(\cdot, \widetilde{y}, t)\|_{L^1(\mathbb{R})} \bpi(\dy, \dd\widetilde{y})$.  We believe this is the most natural way to develop the theory.  Note that this quantity is related to the usual Wasserstein-1 distance between the underlying density profiles via 
\begin{equation} 
	\label{e:Wassersteinrelationship}
	W_1(\rho(t), \widetilde{\rho}(t)) \le  \int_{\mathbb{R}^{d-1}} \|M(\cdot, y, t) - \widetilde{M}(\cdot, z, t)\|_{L^1(\mathbb{R})} \bpi(\dy, \dz) + W_1(\nu, \widetilde{\nu}),
\end{equation} 
where the notation `$W_1$' is used to denote the Wasserstein-1 distance in both $d$ and $d-1$ dimensions (and will also be used below to denote the same in $1$ dimension).  One can see that this inequality holds by realizing that the integral on the right side can be written as 
\[ \int_{\mathbb{R}^{d-1}} \!\!\|M(\cdot, y, t) - \widetilde{M}(\cdot, \widetilde{y}, t)\|_{L^1(\mathbb{R})} \bpi(\dy, \dd\widetilde{y})
= \!\!\int_{\mathbb{R}^{d-1}} \!\!\!W_1(\rho_y(t), \widetilde{\rho}_{\widetilde{y}}(t)) \,\bpi(\dy, \dd\widetilde{y})
=\! \int_{\mathbb{R}^{d-1}}\!\int_\mathbb{R} |x-\widetilde{x}| \,\gamma_{y, \widetilde{y}}(\dx, \dd\widetilde{x},t) \,\bpi(\dy, \dd\widetilde{y}),
\]
where $\gamma_{y, \widetilde{y}}(t)$ is an optimal coupling between $\rho_y(t)$ and $\widetilde{\rho}_{\widetilde{y}}(t)$.  Since $\gamma_{y, \widetilde{y}}(\dx, \dd\widetilde{x},t) \bpi(\dy,\dd\widetilde{y})$ is a coupling between $\rho(t)$ and $\widetilde{\rho}(t)$, we have 
\begin{align*} 
	W_1(\rho(t), \widetilde{\rho}(t))
	& \le \int_{\mathbb{R}^{d-1}} \int_{\mathbb{R}} (|x-\widetilde{x}| + |y - \widetilde{y}|)\, \gamma_{y, \widetilde{y}}(\dx, \dd\widetilde{x},t) \,\bpi(\dy, \dd\widetilde{y}) \\
	& = \int_{\mathbb{R}^{d-1}} \|M(\cdot, y, t) - \widetilde{M}(\cdot, z, t)\|_{L^1(\mathbb{R})} \bpi(\dy, \dz) + W_1(\nu, \widetilde{\nu}),
\end{align*} 
as claimed.  

We are now in a position to summarize our results and approach.

\begin{itemize}[itemsep = 0.56em, leftmargin = 2em]
	\item \textbf{Uniqueness and Stability of Entropy Solutions.}  In Section \ref{s:Stability}, we use the basic Kruzkov doubling-of-the-variables technique (c.f. \cite{Kruzkov1970, Lucier1986}), adapted to the unidirectional framework described above, to establish two different stability estimates on entropy solutions of \eqref{e:ScalarBalance} generated by admissible triples $(M^0, A, \nu)$ and $(\widetilde{M}^0, \widetilde{A}, \widetilde{\nu})$. The first bound, which generalizes a 1-dimensional estimate from \cite{LeslieTan2023}, grows linearly in time and depends on the Lipschitz seminorm (in the first component) of the flux differences.  The second bound  grows exponentially in time but relies only on the $L^\infty$ difference (in the first component) between fluxes, making it a crucial tool in our existence argument later on.  In general, the discretized fluxes from our approximation scheme converge to $A$ in an $L^1$-$L^\infty$ sense (made precise in Section \ref{ss:Existgen}) but might not converge in the stronger sense where $\|\cdot\|_{L^\infty}$ is replaced by the Lipschitz seminorm.  This  renders the linear-in-time stability estimate insufficient for passing to the limit for general initial data.  Our approach to the second stability bound is inspired by the work \cite{BouchutPerthame1998} of Bouchut and Perthame.  The full statement of our uniqueness and stability result is contained in Theorem \ref{th:uniqueness-main}.  
	\item \textbf{Existence of Entropy Solutions.} In Section \ref{s:Existence}, we prove existence of solutions associated to an admissible triple $(M^0, A, \nu)$. Our argument proceeds in two steps.  First, we consider the special case where $\nu$ is atomic and construct the unique entropy solution associated to $(M^0, A, \nu)$ as the limit of a sequence of `sticky particle Cucker--Smale' (SPCS) solutions.  We describe the SPCS dynamics in Section \ref{ss:Existdisc}.  Since this first part of the construction closely parallels that of \cite{LeslieTan2023}, we omit arguments that are essentially unchanged. Next, we discretize an arbitrary admissible triple in the $y$-variable to obtain a sequence $(M^{0,k}, A^k, \nu_k)_{k=1}^\infty$ of admissible approximants.  We generate an associated sequence of entropy solutions $(M^k)_{k=1}^\infty$ and pass to the limit, using our previously established stability estimates.  Our limiting procedure is shown to preserve the entropy conditions, thus finishing the existence proof.

	\item \textbf{Recovery and Approximation of the Euler Alignment dynamics.} In Section \ref{s:Recovery}, we rigorously justify our use of the system \eqref{e:ScalarBalance} to describe the unidirectional Euler Alignment system \eqref{e:EAuni}.  We show how to generate an admissible triple $(M^0, A, \nu)$ from initial conditions $(\rho^0, u^0)$, and conversely how to recover a solution $(\rho, u)$ of \eqref{e:EAuni} from an entropy solution $M$ of \eqref{e:ScalarBalance}.  Theorem \ref{t:entropicselection} gives the precise statement.  We also demonstrate how our weak solutions can be approximated by sticky particle Cucker--Smale dynamics in Theorem \ref{t:SPCSconvgen}, Theorem \ref{t:SPCSconvrateweak}, and Corollary \ref{c:SPCSratestrong}.  The last two of these require additional assumptions on the initial data $(\rho^0, u^0)$ but provide Wasserstein-$1$ rates of convergence for the density profile.
	\item \textbf{Flocking.} In Section \ref{s:Flocking}, we turn to the question of long-time behavior.  We show that, under natural assumptions on the communication protocol $\phi$ and the initial density profile, solutions of the Cucker--Smale ODE's associated to unidirectional initial data must experience flocking and velocity alignment, with a rate that is independent of the number of agents.  The novelty here is that our assumptions allow for $\supp \phi$ to have a cylinder of degeneracy around the $x$-axis.  This has significant implications for our understanding of the  
	fundamental mechanism underlying the emergent behavior: Communication in the direction of motion is not a necessary condition for flocking to occur.   We state flocking theorems for both the Cucker--Smale and Euler Alignment systems, but we write down proofs only for the discrete setting.  A blueprint for how to extend the results to the continuum level is contained in \cite{LeslieTan2023}.
\end{itemize}

\subsection{Additional commentary and related literature}

\subsubsection{Degenerate protocols}
A few words are in order about our flocking result (see Section \ref{s:Flocking} for additional context).  First of all, despite the implications mentioned above, our proof of flocking does not require any special tools.  On the contrary, it is a rather simple adaptation of an existing argument (found, for instance, in \cite{ShvydkoyBook}) that is used to establish flocking for any initial data under the assumption of a radial, heavy-tailed protocol (i.e., $\phi\in L^1_{\loc}(\mathbb{R}^d)\backslash L^1(\mathbb{R}^d)$).  We speculate that the reason it has gone unnoticed until now is because our argument as written really only applies in the context of unidirectional dynamics in two or more spatial dimensions.  It would be very interesting to try to extend the paradigm suggested by our theorem to more general `almost unidirectional' dynamics, c.f. \cite{LearShvydkoy2019}.  We leave the exploration of this problem for future work. 

Our flocking result also shares some similarity to work of Dietert and Shvydkoy \cite{DietertShvydkoy2019}, which used a sophisticated corrector method to prove flocking for arbitrary initial conditions when the communication protocol is radial and degenerate on a ball near the origin.  We emphasize that despite the similarity in spirit, the two results cover essentially disjoint regimes: Ours requires a very special geometry in the initial conditions but is more flexible in the allowed degeneracy of communication; on the other hand, the work of Dietert and Shvydkoy can handle arbitrary initial conditions but requires the region of communication degeneracy to be bounded.

\subsubsection{Limiting configurations}
Similar to many previous results, our flocking theorem allows us to deduce the existence of a limiting density profile $\rho_\infty$ to which $\rho(t)$ converges (in an appropriate reference frame, and in the weak-$*$ sense of measures).  The understanding the structure of this limiting configuration is an interesting but subtle question, which has been studied for 1D weak solutions of the Euler Alignment system \cite{LeslieTan2024, Galtung2025}, 1D and unidirectional classical solutions of the Euler Alignment system \cite{LS2019, LLST2020geometric,Lear2021unising}, and 1D Cucker--Smale systems \cite{HaKimParkZhang2019, HaParkZhang2018}.  Much remains unknown even in the already-studied regimes, but the present work opens the door to the consideration of an additional setting, namely that of  unidirectional weak solutions.

\subsubsection{Comparison with \cite{CarrilloChoiTadmor2026}}
A recent paper by Carrillo, Choi, and Tadmor \cite{CarrilloChoiTadmor2026} (c.f. also \cite{Tadmor2021review, Tadmor2026}) considers the following version of the Euler Alignment system, with the Reynolds stress $\tau$ determined from the Lagrangian dynamics of the underlying kinetic equation (not discussed here):
\begin{equation}
	\label{e:EAtau}
	\left\{ 
	\begin{split}
		\partial_t \rho + \nabla \cdot (\rho \mathbf{u}) & = 0 \\
		\partial_t (\rho \bu) + \nabla \cdot (\rho \bu \otimes \bu  + \tau) 
		& = \int_{\R^d} \phi(\bm{x} - \bm{\xi}) (\mathbf{u}(\bm{\xi}) - \mathbf{u}(\bm{x})) \rho(\bm{x})\rho(\bm{\xi}) \,\dd \bm{\xi},
	\end{split}
	\right.
	\qquad \qquad 
	\bm{x}\in \mathbb{R}^d, \,t>0.
\end{equation}
The authors of \cite{CarrilloChoiTadmor2026} develop a wellposedness theory for \eqref{e:EAtau} and show that, under natural assumptions on $\phi$, the Reynolds stress vanishes in the time-asymptotic limit (though it may be nontrivial for all finite time). Of course, if $\tau \equiv 0$ for all time, then \eqref{e:EA} and \eqref{e:EAtau} are the same.   The wellposedness theory provided by \cite{CarrilloChoiTadmor2026} therefore reduces the question of global-in-time regularity of solutions to \eqref{e:EA} to a question of whether or not $\tau$ is ever nontrivial for the dynamics of the underlying kinetic system.  In the regime covered by our work, on the other hand (which is limited to unidirectional dynamics), the primary interest is in the case where the Lagrangian trajectories associated with the kinetic system would intersect, generating a nonzero Reynolds stress.  Therefore, our results are essentially disjoint from theirs.  However, it would be interesting to compare the solutions of the system \eqref{e:EAtau} provided by \cite{CarrilloChoiTadmor2026} generated from unidirectional data to those of \eqref{e:EAuni} provided by the present work.  

\subsubsection{Weakly singular protocols} The analysis of the second author and Tan in \cite{LeslieTan2023} develops the wellposedness theory for 1D entropic solutions under the assumption of an arbitrary symmetric communication protocol $\phi\in L^1_{\loc}(\mathbb{R})$.  It could be expected that the unidirectional theory might be available for a class of protocols as broad as those satisfying \eqref{e:phieven} and 
\begin{equation}
	\label{e:phiL1locx}
	\sup_{y\in \mathbb{R}^{d-1}} \int_{[-R,R]} \phi(x,y)\,\dx <+\infty,
	\qquad \text{ for any fixed } R>0.    
\end{equation}
Certain aspects of our analysis below (e.g., uniqueness of solutions, flocking for Cucker--Smale dynamics) do indeed still work for such protocols, with no essential changes.  Other aspects (e.g., our Bouchut--Perthame-type stability estimate) have little hope of being adapted successfully to cover the case of \eqref{e:phiL1locx}.  The authors are cautiously optimistic that the framework established in this paper can serve as a basis for the development of a weak solution theory that includes protocols satisfying~\eqref{e:phiL1locx}, and we plan to explore this direction in future work.  Relaxing the assumption \eqref{e:phiLip2}, however, appears to require sufficiently different ideas and techniques that the resulting theory would be more naturally developed in separate work.

\subsubsection{Sticky particle dynamics and the pressureless Euler equations} As has already been mentioned above, the reduction of the 1D Euler Alignment system to a single scalar balance law was a key feature of the analysis of \cite{LeslieTan2023}.  This reduction allowed the authors to adapt the framework of Brenier and Grenier's work \cite{brenier1998sticky} on the pressureless Euler equations.  Both \cite{brenier1998sticky} and \cite{LeslieTan2023} approximate the solutions to the scalar equation using discrete sticky particle dynamics.  Generally speaking, the structural similarities (to a point) between the well-studied pressureless Euler equations and the Euler Alignment system has been a productive direction in recent years.  We highlight in particular the paper of Galtung \cite{Galtung2025}, who exhibited a gradient flow structure for the 1D Euler Alignment system using techniques developed by Natile and Savar\'e \cite{NatileSavare2009} for the pressureless Euler system, and demonstrated that the solutions generated this way were equivalent to the ones from \cite{LeslieTan2023}.  It is beyond the scope of this paper to give a complete overview of the literature on the pressureless Euler system, but a partial list of references is as follows:
\cite{Zeldovich1969sb,Grenier1995,ERykovSinai1996,WangDing1997,WangHuangDing1997,BouchutJames1998, BouchutJames1999, HuangWang2001stickyunique,nguyen2008pressureless,NatileSavare2009,BrenierGangboSavareWestdickenberg2013,NguyenTudorascu2015,CavallettiSedjroWestdickenberg2015,  Hynd2019LagrangianSP, Hynd2020trajectory, Hynd2020probmeasureSP, BianchiniDaneri2023, carrillogaltung2025}.  These references cover many different viewpoints, and it is a subject of continuing interest to explore which of the techniques are applicable for the Euler Alignment equations and related systems.

\subsection{Notation and Conventions}
\label{ss:notation}
Most of the notation we use is standard, but let us collect a few conventions here for the convenience of the reader.  Unless otherwise specified, `$C$' will denote a constant that may change from line to line but is independent of any quantities that may later be taken to zero or infinity.  All functions and measures we consider will be defined on some Euclidean space or a subset thereof, unless explicitly specified otherwise.  We will use $[f]_{\Lip}$ to denote the Lipschitz seminorm of a function $f$, i.e., $[f]_{\Lip} = \sup_{x_1\ne x_2} \frac{|f(x_1) - f(x_2)|}{|x_1 - x_2|}$.  We use the word `measurable' to mean `Borel measurable' unless explicitly noted otherwise.  
We will use $\cM$ to denote the space of signed Radon measures, and $\cP$ will denote the space of probability measures (with $\cP_c$ denoting probability measures with compact support).  We will always equip $\cM$ and $\cP$ with the weak-$*$ topology, defined via convergence with respect to test functions $\zeta\in C_c$ which are continuous and compactly supported.  (We will use the standard notation $\stackrel{*}{\rightharpoonup}$ to denote weak-$*$ convergence.) However, all the probability measures we will consider will belong to a family whose support is contained in some compact set.  For such a family, weak-$*$ convergence is equivalent to weak convergence (i.e., convergence with respect to test functions which are continuous and bounded), and the weak topology is metrizable via the Wasserstein-1 metric, or equivalently the bounded Lipschitz distance.  For probability measures $\mu$ and $\widetilde{\mu}$ on $\mathbb{R}^k$, this is defined as follows.
\begin{equation} 
	W_1(\mu, \widetilde{\mu}) = \inf_{\bpi} \int\!\!\!\int |y-\widetilde{y}|\, \bpi(\dy, \dd\widetilde{y})
	= \sup\bigg\{ \int f(y) \,\mu(\dy) - \int f(y) \,\widetilde{\mu}(\dy) : \sup |f|, [f]_{\Lip} \le 1\bigg\},
\end{equation} 
where the infimum in the first expression is taken over all couplings $\bpi$ whose first marginal is $\mu$ and whose second marginal is $\widetilde{\mu}$, i.e., all $\bpi\in \cP(\mathbb{R}^k\times \mathbb{R}^k)$ such that $\bpi(E_1\times \mathbb{R}^k) = \mu(E_1)$ and $\bpi(\mathbb{R}^k\times E_2) = \widetilde{\mu}(E_2)$ for all Borel sets $E_1, E_2$ of $\mathbb{R}^k$.  We will use $W_1$ to denote the Wasserstein-$1$ distance in any spatial dimension $k$ (though $k=1$, $k=d-1$, and $k=d$ will be the only cases we use).  The value of $k$ will be clear from context. 

Given $\mu\in \cM$ and a measurable function $f$, we denote by $f_{\sharp} \mu$ the \textit{pushforward} measure of $\mu$ by $f$, defined via $f_{\sharp} \mu(E) = \mu(f^{-1}(E))$.  This implies that $\int g(x) (f_\sharp \mu)(\dx) = \int g\circ f(x)\, \mu(\dx)$ for any measurable function $g$ such that $g\circ f\in L^1(\mu)$. 

For a right-continuous, nondecreasing function $f:\mathbb{R}\to [0,1]$, we define its \textit{generalized inverse} via 
\begin{equation}
	\label{e:geninvdef}
	f^{-1}(m) = \inf\{ x\in \mathbb{R} : f(x)\ge m\}.  
\end{equation}
If $f:\mathbb{R}\to [0,1]$ is the cumulative distribution function of a probability measure $\mu$ on $\mathbb{R}$, i.e., $f(x) = \mu((-\infty, x])$, then $\mu$ is the pushforward of Lebesgue measure on $\mathbb{R}$ under $f^{-1}$.  If $\widetilde{\mu}$ is another probability measure on $\mathbb{R}$ with cumulative distribution function $\widetilde{f}$, then 
\begin{equation}
	\label{e:W1id}
	W_1(\mu, \widetilde{\mu}) = \|f - \widetilde{f}\|_{L^1(\mathbb{R})} = \|f^{-1} - \widetilde{f}^{-1}\|_{L^1([0,1])}.  
\end{equation}
Furthermore, if $g$ is bounded and measurable, then 
\begin{equation} 
	\label{e:geninvid}
	\int_{(-\infty,x]} g(\xi) \,\mu(\dd\xi) = \int_0^{f(x)} (g\circ f^{-1})(m)\,\dm,
	\qquad \text{ for all } x\in \mathbb{R}.
\end{equation}

\section{Uniqueness and stability}

\label{s:Stability}

This section is dedicated to the proof of uniqueness and stability of entropy solutions of \eqref{e:ScalarBalance}.  The precise statement we prove is as follows.  Here and below, we denote the gradient of $\phi(x,y)$ with respect to its second argument by $\n_2 \phi$.

\begin{THEOREM}\label{th:uniqueness-main} 
	Assume $\phi$ satisfies \eqref{e:phieven} and \eqref{e:phiLip2}. Let $(M^0, A, \nu)$ and $(\widetilde{M}^0, \widetilde{A}, \widetilde{\nu})$ be two admissible triples. Assume $M$ and $\widetilde{M}$ are entropy solutions of \eqref{e:ScalarBalance} on $[0,T)$ associated to these triples, and that $R(T)$ satisfies the requirements of Definition \ref{def:entropysoln} for both $M$ and $\widetilde{M}$.  Let $\bpi$ be an optimal coupling between $\nu$ and $\widetilde{\nu}$, associated to the cost $c(y,\widetilde{y}) = |y-\widetilde{y}|$.  Then we have the following estimates, for $0\le s \le t < T$. 
	
	First, we have
	\begin{equation}\label{eq:stab-pi}
		\begin{aligned}
			& \int\!\|M(\cdot,y,t)-\widetilde{M}(\cdot,\widetilde{y},t)\|_{L^1}\,\bpi(\dd  y,\dd  \widetilde{y}) \\
			& \le
			\int\!\bigg[\|M(\cdot,y,s)-\widetilde M(\cdot,\widetilde{y},s)\|_{L^1} + (t-s) [A(\cdot,y)-\widetilde{A}(\cdot,\widetilde{y})]_{\Lip} \bigg] \,\bpi(\dd  y,\dd  \widetilde{y})
			+(t-s)\,8 R(T)\,
			\|\n_2\phi\|_{L^\infty}
			\,W_1(\nu,\tnu).
		\end{aligned}
	\end{equation}
	In the special case where $\nu = \widetilde{\nu}$, this estimate (without the last term on the right) can be obtained under the weaker assumption \eqref{e:phiL1locx} instead of the Lipschitz assumption on the second argument.  
	
	Second, denoting  
	\begin{equation}\label{eq:BP-E}
		\mathcal{D}(\tau):=
		\int\!\|M(\cdot,y,\tau)-\widetilde M(\cdot,z,\tau)\|_{L^1}\,\bpi(\dd  y,\dd  z),\qquad \tau\in [0,T).
	\end{equation}
	\begin{equation} 
		\label{e:Ainftydef}
		\cA_\infty:= 
		\int\!\|A(\cdot,y)-\widetilde A(\cdot,\widetilde{y})\|_{L^\infty([0,1])}\,\bpi(\dd  y,\dd  \widetilde{y}),
	\end{equation} 
	we also have the following,  for some absolute constant $C>0$.
	\begin{equation}\label{eq:main-unidir-optimal}
		\begin{aligned}
			\cD(t)
			& \le
			\bigg( \cD(s) 
			+ C \sqrt{ 2R(T) \cA_\infty (t-s)} 
			+ 
			C(t-s)
			\, \big( \cA_\infty + R(T)\|\n_2\phi\|_{L^\infty} W_1(\nu,\widetilde\nu) \big) \bigg)e^{C\|\phi\|_{L^\infty}(t-s)}.
		\end{aligned}
	\end{equation}
\end{THEOREM}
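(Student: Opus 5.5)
We prove both estimates by Kruzkov's doubling of variables, applied along pairs of horizontal slices $(y,\widetilde y)$ and then averaged against the optimal coupling $\bpi$. For $\nu$-a.e. $y$ and $\widetilde\nu$-a.e. $\widetilde y$ we use the slice-wise entropy inequalities \eqref{eq:entropy-M-alt}. In the inequality for $M(\cdot,y,\cdot)$ we take $\alpha=\widetilde M(\widetilde x,\widetilde y,\widetilde t)$, and in the one for $\widetilde M(\cdot,\widetilde y,\cdot)$ we take $\alpha=M(x,y,t)$. The test function is $\zeta(x,t,\widetilde x,\widetilde t)=\omega_\e(x-\widetilde x)\,\omega_\delta(t-\widetilde t)\,\chi(t)$. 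We add the two inequalities, integrate in $(\widetilde x,\widetilde t)$, resp. $(x,t)$, and integrate against $\bpi(\dy,\dd\widetilde y)$. Measurability in $(y,\widetilde y)$ follows from the $\nu\otimes\dx$-measurability of $M$. Sending $\delta\to0$ and choosing $\chi$ to approximate $\mathbf 1_{[s,t]}$ then gives an integral inequality for $\cD_\e(\tau)=\int\!\!\int\!\!\int|M(x,y,\tau)-\widetilde M(\widetilde x,\widetilde y,\tau)|\,\omega_\e(x-\widetilde x)\,\dx\,\dd\widetilde x\,\bpi$. It contains a flux term and a nonlocal source term.

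\textbf{Flux term.} We split $A(\cdot,y)$ as $\widetilde A(\cdot,\widetilde y)+\bigl(A(\cdot,y)-\widetilde A(\cdot,\widetilde y)\bigr)$. The common part cancels exactly in the doubled inequality, as in the classical argument.

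For \eqref{eq:stab-pi} we let $\e\to0$ first. The remaining part contributes
\[
\int \sgn(M-\widetilde M)\,\partial_x\bigl[(A-\widetilde A)(M)\bigr]\,\dx .
\]
Using the chain rule for monotone BV functions and $\mathrm{TV}(M(\cdot,y))\le1$, this is at most $[A(\cdot,y)-\widetilde A(\cdot,\widetilde y)]_{\Lip}$ per slice pair, which gives the $(t-s)[A-\widetilde A]_{\Lip}$ term.

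For \eqref{eq:main-unidir-optimal} we follow Bouchut--Perthame and keep $\e>0$. The remaining flux part is then bounded by $\|\omega_\e'\|_{L^1}\,\|A(\cdot,y)-\widetilde A(\cdot,\widetilde y)\|_{L^\infty}$ times the $x$-length $2R(T)$ of the support, i.e. by $C R(T)/\e$ per slice pair. We also need to pass between $\cD_\e$ and $\cD$. Since each $M(\cdot,y,\tau)$ is monotone with total variation $1$, we have $|\cD_\e-\cD|\le C\e$. Combining, the flux contribution over $[s,t]$ is at most $C\e+C(t-s)R(T)\cA_\infty/\e$. We have not tracked the constants carefully: with the bound above, optimizing $\e$ gives a term of order $\sqrt{R(T)\cA_\infty(t-s)}$, and the precise $2R(T)$ inside the square root in \eqref{eq:main-unidir-optimal} depends on how these constants come out. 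This square-root term is the Bouchut--Perthame mechanism and is the reason only $L^\infty$ control of the flux difference is needed.

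\textbf{Nonlocal source terms.} Integrating by parts in $\xi$ and using that $\Phi$ is odd in its first variable with $\partial_1\Phi=\phi$, we rewrite
\[
B[\nu,M](x,y)=\int\!\!\int \phi(x-\xi,y-\eta)\,\bigl(M(\xi,\eta)-\mathbf 1_{\xi>0}\bigr)\,\dd\xi\,\nu(\dd\eta)+(\text{a term independent of }M).
\]
From this form we get two estimates.
\begin{itemize}
\item $\partial_xB=\phi*\rho$, so $0\le\partial_xB\le\|\phi\|_{L^\infty}$.
\item $|B[\nu,M](x,y)-B[\widetilde\nu,\widetilde M](x,\widetilde y)|\le\|\phi\|_{L^\infty}\,\cD+C R(T)\|\n_2\phi\|_{L^\infty}\,W_1(\nu,\widetilde\nu)$. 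This is obtained by writing the $\eta$-integral against $\bpi(\dd\eta,\dd\widetilde\eta)$ and using the Lipschitz bound \eqref{e:phiLip2} on the $x$-window $[-R,R]$, both for the $|y-\widetilde y|$ mismatch and for the $|\eta-\widetilde\eta|$ mismatch. Integrated against $\bpi(\dy,\dd\widetilde y)$, both contribute multiples of $W_1(\nu,\widetilde\nu)$.
\end{itemize}
The doubled source term is $B\,\partial_x|M-\widetilde M|-\widetilde B\,\partial_{\widetilde x}|M-\widetilde M|$. We write it as $\frac{B+\widetilde B}{2}$ times a total derivative, which is harmless after integration by parts up to $\partial_xB$, plus $(B-\widetilde B)$ times a derivative.

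For \eqref{eq:main-unidir-optimal} we bound all source terms crudely by $C\|\phi\|_{L^\infty}\cD+CR(T)\|\n_2\phi\|_{L^\infty}W_1(\nu,\widetilde\nu)$. Gronwall's inequality then yields the factor $e^{C\|\phi\|_{L^\infty}(t-s)}$.

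For \eqref{eq:stab-pi} no exponential growth is allowed, so we must show that the $\|\phi\|_{L^\infty}\cD$ contributions have a good sign. The plan is to reproduce the 1D mechanism of Leslie--Tan. After $\e\to0$, the source contribution becomes
\[
\int\!\!\int\sgn(M-\widetilde M)\,\bigl(B\,\partial_xM-\widetilde B\,\partial_x\widetilde M\bigr).
\]
We write $B\,\partial_xM-\widetilde B\,\partial_x\widetilde M$ as a derivative of primitives, integrate by parts, and use the evenness \eqref{e:phieven} to symmetrize the double integral in $(x,y)$ and $(\xi,\eta)$; the symmetrized same-slice part should be nonpositive, as in 1D alignment. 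What remains is only the transverse mismatch between $\phi(\cdot,y-\eta)$ and $\phi(\cdot,\widetilde y-\widetilde\eta)$. On the window $[-R(T),R(T)]$, where $M-\widetilde M$ is supported, this is bounded by $8R(T)\|\n_2\phi\|_{L^\infty}W_1(\nu,\widetilde\nu)$. When $\nu=\widetilde\nu$ we take $\bpi$ to be the diagonal coupling, the mismatch vanishes, and only $\sup_y\int_{-R}^R\phi(x,y)\,\dx$ enters, which is why \eqref{e:phiL1locx} suffices in that case.

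I expect the main obstacle to be this sign or cancellation argument for the source term in the linear estimate. The difficulty is that $B\,\partial_xM$ is a nonconservative product of a continuous function with a measure. It has to be handled in the doubled variables, with the coupling $\bpi$ inserted both in the outer $(y,\widetilde y)$ integral and inside $B$ through $(\eta,\widetilde\eta)$, without losing the symmetric structure that makes the same-slice part nonpositive.
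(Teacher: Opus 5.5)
Your overall framework matches the paper's. You double the variables slice by slice and average against $\bpi$, and the common flux cancels. For \eqref{eq:main-unidir-optimal} you keep the spatial scale $\e$ fixed, use $|\cD-\cD_\e|\le C\e$, bound the flux by order $R(T)\cA_\infty/\e$, bound the nonlocal term crudely, apply Gr\"onwall and choose $\e\sim\sqrt{R(T)\cA_\infty(t-s)}$. That second estimate goes through. At fixed $\e$ the $(B-\widetilde B)$ piece picks up an extra $\|\phi\|_{L^\infty}\e$ from $|B(x,y)-B(\widetilde x,y)|$, which is harmless. The paper instead integrates by parts onto $b_\Delta'$.

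The gap is the nonlocal term in the linear estimate \eqref{eq:stab-pi}. This is the real content of that estimate, and your proposal only asserts that the relevant part ``should be nonpositive.''
\begin{itemize}
\item Your starting expression $\int\!\!\int\sgn(M-\widetilde M)(B\,\partial_xM-\widetilde B\,\partial_x\widetilde M)$ is ambiguous on the common jump set of $M(\cdot,y)$ and $\widetilde M(\cdot,\widetilde y)$. This set is generically nonempty for sticky-particle data.
\item Writing the bracket as a derivative of primitives and integrating by parts only moves the derivative onto $\sgn(M-\widetilde M)$. That produces point masses at sign changes with no definite sign.
\item Evenness, the only property you invoke for the symmetrization, cannot give the cancellation. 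Take $\phi\equiv-c<0$, which is even and Lipschitz, and $A=\widetilde A=0$. The dynamics is the dilation $x\mapsto\bar x+e^{ct}(x-\bar x)$, and for two solutions with the same barycenter $\|M(t)-\widetilde M(t)\|_{L^1}$ grows like $e^{ct}$.
\end{itemize}

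Here is what is needed, as the paper does it. Never compute the limit; bound its $\limsup$.
\begin{itemize}
\item In the variables $x_\pm=(x\pm\widetilde x)/2$, the piece $(B+\widetilde B)\,\partial_{x_+}|M-\widetilde M|$ converges to $\tfrac12 I$, where $I=\int\!\!\int(B+\widetilde B)\,\partial_1|M(x,y)-\widetilde M(x,\widetilde y)|\,\dx\,\bpi$.
\item The piece $(B-\widetilde B)\,\partial_{x_-}|M-\widetilde M|$ is only bounded, via the BV chain rule and continuity of $B$ in $x$, by $\tfrac12 II$, where $II=\int\!\!\int(\partial_1M+\partial_1\widetilde M)\,|B-\widetilde B|\,\dx\,\bpi$.
\end{itemize}
The sign of $B-\widetilde B$ is lost here, so the cancellation must use $\phi\ge0$ inside $|B-\widetilde B|$. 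The steps are:
\begin{itemize}
\item Split both $B+\widetilde B$ and $B-\widetilde B$ with $a_1a_2+b_1b_2=\tfrac12[(a_1-b_1)(a_2-b_2)+(a_1+b_1)(a_2+b_2)]$, with $\bpi(\dd\eta,\dd\widetilde\eta)$ also used inside $B$.
\item The parts carrying $\Phi(\cdot,y-\eta)-\Phi(\cdot,\widetilde y-\widetilde\eta)$ give the $8R(T)\|\n_2\phi\|_{L^\infty}W_1(\nu,\widetilde\nu)$ term.
\item In the symmetric part of $|B-\widetilde B|$, use $\bigl|\int\Phi(x-\xi,\cdot)\,\partial_1(M-\widetilde M)\,\dd\xi\bigr|\le\int\Phi(x-\xi,\cdot)\,\partial_1|M-\widetilde M|\,\dd\xi$. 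This holds because $\Phi$ is nondecreasing in its first argument.
\item Only then does the swap $(x,y,\widetilde y)\leftrightarrow(\xi,\eta,\widetilde\eta)$, with $\Phi$ odd in its first and even in its second argument, turn this bound into exactly minus the symmetric part of $I$. The symmetric $\bpi$-averaged factor $\Phi(x-\xi,y-\eta)+\Phi(x-\xi,\widetilde y-\widetilde\eta)$ is what makes the swap work with two different transverse measures.
\end{itemize}

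A smaller point of the same kind concerns your flux term. You cannot let $\e\to0$ first. Integrate by parts at fixed $\e$, using that $m\mapsto\sgn(m-\widetilde m)[(A-\widetilde A)(m)-(A-\widetilde A)(\widetilde m)]$ is $[A(\cdot,y)-\widetilde A(\cdot,\widetilde y)]_{\Lip}$-Lipschitz uniformly in $\widetilde m$, and only then pass to the limit.
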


The remainder of this section is dedicated to the proof of Theorem \ref{th:uniqueness-main}, which we split across three subsections.  The proofs of both \eqref{eq:stab-pi} and \eqref{eq:main-unidir-optimal} rely on a doubling-of-the-variables argument, which we set up in Section \ref{ss:stabsetup} before specializing to \eqref{eq:stab-pi} in Section \ref{ss:stab1} and \eqref{eq:main-unidir-optimal} in Section \ref{ss:stab2}.  Before we begin the proof in earnest, we pause to recall the BV chain rule (c.f. \cite{AmbrosioDalMaso1990}), which we will use repeatedly in our stability estimates (mostly without comment) and also later in Section \ref{s:Recovery}.

\begin{LEMMA}
	\label{l:BVChain}
	Suppose $W\in BV_{\loc}(\mathcal{U})$, where  $\mathcal{U}$ is an open subset of $\mathbb{R}^k$.  Assume also that $f$ is a real-valued Lipschitz function whose domain of definition includes the image of $W$.  Then $f\circ W\in BV_{\loc}(\mathcal{U})$, and $|\partial_i (f\circ W)| \le [f]_{\Lip} |\partial_{i} W|$ for each $i=1,\ldots, k$ in the sense of measures.  Furthermore, there exists a single bounded Borel measurable function $g$ on $\mathcal{U}$, with $|g|\le [f]_{\Lip}$, such that $\partial_i (f\circ W) = g \partial_i W$ for each $i=1,\ldots, k$ on $\mathcal{U}$.
\end{LEMMA}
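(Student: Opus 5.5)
Since the statement is the classical BV chain rule, I would prove it in two stages. First I would get the measure inequality $|\partial_i (f\circ W)|\le [f]_{\Lip}|\partial_i W|$ by mollification. Then I would obtain the common density $g$ from Vol'pert's fine structure of $DW$. I treat the case of scalar $W$, which is how the lemma is used below.

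\textbf{Stage 1: the measure inequality.} Let $\eta_\e$ be a standard mollifier and set $W_\e=\eta_\e*W$ on compactly contained open subsets $\mathcal{U}'\Subset\mathcal{U}$.
\begin{itemize}
\item Since $W_\e$ is smooth and $f$ is Lipschitz, $f\circ W_\e$ is locally Lipschitz. Comparing difference quotients gives $|\partial_i(f\circ W_\e)|\le [f]_{\Lip}|\partial_i W_\e|$ pointwise a.e.
\item We have $\partial_i W_\e=\eta_\e*\partial_i W$, hence $|\partial_iW_\e|\le \eta_\e*|\partial_i W|$, and the right side converges weakly-$*$ to $|\partial_i W|$.
\item Also $f\circ W_\e\to f\circ W$ in $L^1_{\loc}$.
\end{itemize}
Now fix a nonnegative $\psi\in C_c(\mathcal{U}')$ and a test function $\varphi\in C^1_c$ with $|\varphi|\le\psi$. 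Then
\[
\int (f\circ W)\,\partial_i\varphi=\lim_{\e\to0}\int (f\circ W_\e)\,\partial_i\varphi\le \liminf_{\e\to0}[f]_{\Lip}\int \psi\,(\eta_\e*|\partial_iW|)=[f]_{\Lip}\int\psi \,\dd|\partial_i W|.
\]
Taking the supremum over such $\varphi$ shows that $f\circ W\in BV_{\loc}$. It also gives $\int\psi\,\dd|\partial_i(f\circ W)|\le [f]_{\Lip}\int \psi\,\dd|\partial_i W|$, which is the claimed inequality of measures.

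\textbf{Stage 2: a single density $g$.} Stage 1 only yields $\partial_i(f\circ W)=g_i\,\partial_iW$ with $i$-dependent densities $g_i$, obtained from Radon–Nikodym derivatives with respect to $|\partial_i W|$. To get one $g$, I would use Vol'pert's decomposition $DW=D^dW+D^jW$. Here $D^dW$ is the diffuse part (absolutely continuous plus Cantor), and $D^jW=(W^+-W^-)\nu_W\,\mathcal{H}^{k-1}\llcorner J_W$ is the jump part. The chain rule formula I aim to establish is
\[
D(f\circ W)=f'(\widetilde W)\,D^dW+\frac{f(W^+)-f(W^-)}{W^+-W^-}\,D^jW .
\]
Given this, I define $g=f'(\widetilde W)$ off $J_W$, where $\widetilde W$ is the approximate limit and $f'$ is a fixed Borel representative of the a.e. derivative. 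On $J_W$ I set $g$ equal to the difference quotient. Both choices are bounded by $[f]_{\Lip}$, are Borel, and do not depend on $i$.

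The jump part follows by comparing one-sided approximate limits. Blow-up at $\mathcal{H}^{k-1}$-a.e. point of $J_W$ shows that $f\circ W$ jumps from $f(W^-)$ to $f(W^+)$ across the same normal.

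\textbf{Main obstacle: the diffuse part.} I expect this to be the hardest step because $f$ is only Lipschitz, so $f'$ may fail to exist on a Lebesgue-null set $N$. The first thing I would try is the coarea formula for $D^dW$. It implies $|D^dW|(\widetilde W^{-1}(N))=0$ for every Lebesgue-null $N$, so $f'(\widetilde W)$ is well defined $|D^dW|$-a.e.

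To prove the diffuse formula itself, I would argue in three steps.
\begin{enumerate}
\item For $f\in C^1$, it follows from Stage 1's approximation together with strict convergence of $W_\e$.
\item For general Lipschitz $f$, approximate by $f_n=f*\eta_{1/n}$; then $f_n'\to f'$ off a null set.
\item Pass to the limit by dominated convergence with respect to $|D^dW|$, again using the coarea null-set property.
\end{enumerate}
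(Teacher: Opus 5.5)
The paper does not prove this lemma. It only recalls it from \cite{AmbrosioDalMaso1990}, whose theorem covers vector-valued $W$ and Lipschitz $f:\mathbb{R}^m\to\mathbb{R}$. There the real work is to show that $f$, restricted to a suitable affine subspace through $\widetilde W(x)$ determined by the diffuse part of $DW$, is differentiable for $|D^dW|$-a.e.\ $x$; in the scalar case this reduces to your coarea null-set property $|D^dW|(\widetilde W^{-1}(N))=0$.

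Your argument is the classical scalar proof, and it is sound.
\begin{itemize}
\item The mollification step giving $|\partial_i(f\circ W)|\le[f]_{\Lip}|\partial_iW|$ is correct as written.
\item You rightly note that this step only yields $i$-dependent densities.
\item Getting a single $g$ from the Federer--Vol'pert decomposition, blow-up at jump points and the coarea null-set property is the right mechanism. Set $g$ arbitrarily on $S_W\setminus J_W$, where $\widetilde W$ is undefined; this set is $\mathcal{H}^{k-1}$-null, hence $|DW|$-null.
\end{itemize}
Restricting to scalar $W$ costs nothing, since every use in the paper is scalar. This includes the proof of Theorem~\ref{t:entropicselection}, where the single density is exactly what gives one $\psi$ for both $\partial_x$ and $\partial_t$. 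So your route gives a self-contained proof of precisely what is used, while the citation gives vector-valued generality the paper never needs.

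Several steps need a different or fuller justification than you indicate.

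\textbf{Step (1).} Strict convergence of $W_\e$ does not by itself identify the limit of $f'(W_\e)\,DW_\e$. What works is Fubini: $\int\varphi f'(W_\e)\,\dd(\eta_\e*DW)=\int G_\e\,\dd DW$, with $G_\e(x)=\int\varphi(z)f'(W_\e(z))\eta_\e(z-x)\,\dz$. Combine this with the fact that $W_\e\to\widetilde W(x)$ uniformly on $B(x,\e)$ at every $x\notin S_W$. Continuity of $f'$ and dominated convergence then give $G_\e\to\varphi f'(\widetilde W)$ $|DW|$-a.e.\ off $S_W$. The bound $|G_\e(x)|\le[f]_{\Lip}\sup_{B(x,\e)}|\varphi|$ shows that what remains in the limit is a measure carried by $J_W$, so restricting to $\mathcal{U}\setminus S_W$ is legitimate.

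\textbf{A shortcut avoiding the $C^1$ case.} For nondecreasing Lipschitz $f$, superlevel sets of $f\circ W$ are superlevel sets of $W$. The coarea formula then gives $D(f\circ W)=\int f'(t)\,D1_{\{W>t\}}\,\dt$ directly. Together with $\partial^*\{W>t\}\setminus S_W\subset\{\widetilde W=t\}$ for a.e.\ $t$, this yields the diffuse formula. Any Lipschitz $f$ is a difference of two nondecreasing Lipschitz functions.

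\textbf{Step (3).} Weak-$*$ convergence $D(f_n\circ W)\stackrel{*}{\rightharpoonup}D(f\circ W)$ does not control the diffuse parts separately. Pass to the limit in the full formula (diffuse plus jump) instead; its right-hand side converges in total variation by dominated convergence.

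\textbf{Domain of $f$.} Extend $f$ at the outset to a Lipschitz function on all of $\mathbb{R}$ with the same constant, since you use $f'$ and $f*\eta_{1/n}$ beyond the image of $W$.
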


\subsection{Doubling of the variables and preliminary estimates}
\label{ss:stabsetup} 

We begin by following the usual Kruzkov doubling-of-the-variables argument.  We substitute $\alpha=\widetilde M(\widetilde x,\widetilde{y},\widetilde\tau)$ and $\widetilde{\zeta}(x,\tau)=w(x,\tau,\widetilde x,\widetilde\tau)$ into \eqref{eq:entropy-M-alt}: 
\begin{equation}
	\label{e:Mdif1}
	0 \le \int_0^T\int_\RR \Big[
	|M-\widetilde M|\,\p_\tau w + \sgn(M-\widetilde M)\big(A(M,y)-A(\widetilde M,y)\big)\,\p_x w + w\,B[\nu,M]\,\p_x |M-\widetilde M|
	\Big] \, \dx\, \dtau .
\end{equation}

Similarly,
\begin{equation}
	\label{e:Mdif2}
	0 \le \int_0^T\int_\RR \Big[
	|M-\widetilde M|\,\p_{\widetilde\tau} w + \sgn(\widetilde M-M)\big(\widetilde A(\widetilde M,\widetilde{y})-\widetilde A(M,\widetilde{y})\big)\,\p_{\widetilde x} w + w\,B[\widetilde\nu,\widetilde M]\,\p_{\widetilde x} |M-\widetilde M|
	\Big] \, \dd \widetilde x\, \dd \widetilde\tau .
\end{equation}

We integrate \eqref{e:Mdif1} over $\widetilde{x}$ and $\widetilde{\tau}$ and \eqref{e:Mdif2} over $x$ and $\tau$, then add the results together.  We also add and subtract the quantity $\sgn(M-\widetilde{M})(A(M,y) - A(\widetilde{M},y))\partial_{\widetilde{x}} w$, for reasons that will become clear below.  To keep the notation as light as possible, we omit the domain of integration for all spatial integrals.  The domain of integration will always be all of $\mathbb{R}$ for integrals in $x$ and $\widetilde{x}$.  The result of these manipulations is as follows.
\begin{equation}
	\begin{aligned}\label{eq:prior-commonest}
		0  & \le \int_0^T \!\!\int_0^T\!\! \int\!\!\!\int
		|M-\widetilde M|(\p_\tau w+\p_{\widetilde\tau}w)
		\,\dx\,\dd \widetilde x\,\dd \tau\,\dd \widetilde\tau
		\\
		& \quad +\int_0^T \!\!\int_0^T\!\! \int\!\!\!\int
		\sgn(M-\widetilde M)\big(A(M,y)-A(\widetilde M,y)\big)
		(\p_x w+\p_{\widetilde x}w)
		\,\dd  x\,\dd  \widetilde x\,\dd  \tau\,\dd  \widetilde\tau
		\\
		& \quad -\int_0^T \!\!\int_0^T\!\!\int\!\!\!\int
		\sgn(M-\widetilde M)
		\Big(
		(A(M,y)-\widetilde A(M,\widetilde{y}))
		-
		(A(\widetilde M,y)-\widetilde A(\widetilde M,\widetilde{y}))
		\Big)
		\p_{\widetilde x}w
		\,\dd  x\,\dd  \widetilde x\,\dd  \tau\,\dd  \widetilde\tau
		\\
		& \quad +\int_0^T \!\!\int_0^T\!\!\int\!\!\!\int
		w\Big(
		B[\nu,M]\p_x|M-\widetilde M|
		+
		B[\widetilde\nu,\widetilde M]\p_{\widetilde x}|M-\widetilde M|
		\Big)\,\dd  x\,\dd  \widetilde x\,\dd  \tau\,\dd  \widetilde\tau .
	\end{aligned}
\end{equation}

Now, we fix a nonnegative even function $b\in C^\infty_c(\mathbb{R})$ with $\supp(b)\subset (-1,1)$, $\int b = 1$.  Define $b_\e(x) = \frac{1}{\e} b\big( \frac{x}{\e} \big)$ for each $\e>0$, so that  $b_\e\stackrel{*}{\rightharpoonup} \delta$ as $\e\to 0+$.  Next, fix $s,t\in [0,T)$ with $s<t$.  For $0<\lambda<\min\{t-s, T-t\}$, let $h_\lambda:[0,\infty)\to \mathbb{R}$ be a function which is identically zero outside $[s,t+\lambda]$, identically $1$ on $[s+\lambda, t]$, and linear on $[s,s+\lambda]$ and $[t, t+\lambda]$.  As $\lambda\to 0+$, we have $h_\lambda\stackrel{*}{\rightharpoonup} 1_{[s,t]}$  and   $h_\lambda'\stackrel{*}{\rightharpoonup}\delta(\cdot - s) - \delta(\cdot - t)$.  Define 
\[
w_{\Delta, \e, \lambda}(x,\tau,\widetilde{x},\widetilde{\tau}) = b_{\Delta}(x-\widetilde{x}) b_\e(\tau-\widetilde{\tau}) h_\lambda(\tau).
\]
Then 
\[
(\partial_\tau + \partial_{\widetilde{\tau}})w_{\Delta, \e, \lambda} = b_\Delta(x-\widetilde{x}) b_\e(\tau - \widetilde{\tau}) h_\lambda'(\tau),
\]
\[
(\partial_x + \partial_{\widetilde{x}})w_{\Delta, \e, \lambda} = 0.
\]
We substitute $w = w_{\Delta, \e, \lambda}$ in \eqref{eq:prior-commonest} (note that the second integral on the right vanishes completely) and take $\e\to 0+$, then $\lambda\to 0+$.  After an additional integration, \eqref{eq:prior-commonest} becomes 
\begin{equation}\label{e:commonest}
	\mathcal{D}_{\Delta}(t)\le \mathcal{D}_{\Delta}(s)+\int_s^t \mathcal F_{\Delta}(\tau)\,\dd \tau+\int_s^t \mathcal N_{\Delta}(\tau)\,\dd \tau,
\end{equation}
where 
\begin{equation}\label{eq:BP-E-EDelta}
	\mathcal{D}_{\Delta}(t):=
	\int\!\!\!\int\!\!\!\int\!|M(x,y,t)-\widetilde M(\widetilde{x},\widetilde{y},t)| b_{\Delta}(x-\widetilde x)\,\dd  x\,\dd \widetilde{x}\,\bpi(\dd  y,\dd  z),
\end{equation}
\begin{equation}\label{eq:BP-FDelta-def}
	\begin{aligned}[t]
		\mathcal F_{\Delta}(\tau)
		&:=\!\!
		\int\!\!\!\int\!\!\!\int
		\operatorname{sgn}\!\big(M(x,y,\tau)-\widetilde M(\widetilde x,\widetilde{y},\tau)\big)
		\Big(
		(A(M(x,y,\tau),y)-\widetilde A(M(x,y,\tau),\widetilde{y}))
		\\
		&\hspace{53 mm}
		-
		(A(\widetilde M(\widetilde x,\widetilde{y},\tau),y)-\widetilde A(\widetilde M(\widetilde x,\widetilde{y},\tau),\widetilde{y}))
		\Big)
		\,b_{\Delta}'(x-\widetilde x)\,\mathrm{d}x\,\mathrm{d}\widetilde x\;\bpi(\mathrm{d}y,\mathrm{d}\widetilde{y}),
	\end{aligned}
\end{equation}
and 
\begin{equation}
	\label{e:NL}
	\begin{aligned}
		\mathcal{N}_\Delta(\tau) & = \int\!\!\!\int\!\!\!\int
		\Big(
		B[\nu,M](x,y,\tau)\p_x|M-\widetilde M|
		+
		B[\widetilde\nu,\widetilde M](\widetilde x,\widetilde{y},\tau)\p_{\widetilde x}|M-\widetilde M|
		\Big)
		b_\Delta(x-\widetilde{x})
		\,\dd  x\,\dd  \widetilde x\;\bpi(\dd  y,\dd  \widetilde{y}).
	\end{aligned}
\end{equation}

We refer to $\mathcal{F}_\Delta$ (or $\int_s^t \mathcal{F}_\Delta(\tau)\,\dtau$) as the `flux term' and $\mathcal{N}_\Delta$ (or $\int_s^t \mathcal{N}_\Delta(\tau)\,\dtau$) as the `nonlocal term' in what follows.

\medskip
The inequality \eqref{e:commonest} serves as a starting point for the proofs of both \eqref{eq:stab-pi} and \eqref{eq:main-unidir-optimal}.  We now prove each of these in turn.

\subsection{First stability estimate}

\label{ss:stab1} 

In order to deduce \eqref{eq:stab-pi} from \eqref{e:commonest}, we would like to take $\Delta\to 0+$.  However, before we can do so, we need to move the derivative that appears on $b_\Delta$ in the definition of $\cF_\Delta$.  We integrate by parts and use the BV chain rule to estimate the result.  We begin by noting that $\widetilde{M}(\widetilde{x},z,t)$ does not depend on $x$, and furthermore that the function 
\[
m\mapsto \sgn(m - \widetilde{m})\big[(A(m,y) - \widetilde{A}(m,\widetilde{y})) - (A(\widetilde{m},y) - \widetilde{A}(\widetilde{m},\widetilde{y})) \big]
\]
is Lipschitz in $m$, uniformly in $\widetilde{m}$, with Lipschitz constant not exceeding $[A(\cdot, y) - \widetilde{A}(\cdot, \widetilde{y})]_{\Lip}$.  Therefore, 
\begin{equation}\label{eq:BP-flux-by-parts}
	\begin{aligned}
		&\bigg| \int_s^t \int\!\!\!\int
		\sgn(M-\widetilde M)
		\Big(
		(A(M,y)-\widetilde A(M,\widetilde{y}))
		-
		(A(\widetilde M,y)-\widetilde A(\widetilde M,\widetilde{y}))
		\Big)
		b_\Delta'(x-\widetilde{x})
		\,\dd  x\,\dd  \widetilde x\,\dd  \tau \bigg| 
		\\
		&=
		\bigg| \int_s^t \int\!\!\!\int
		\p_x
		\Bigg[
		\sgn(M-\widetilde M)
		\Big(
		(A(M,y)-\widetilde A(M,\widetilde{y}))
		-
		(A(\widetilde M,y)-\widetilde A(\widetilde M,\widetilde{y}))
		\Big)
		\Bigg]
		b_\Delta(x-\widetilde x)
		\,\dd  x\,\dd \widetilde x\,\dd \tau \bigg| \\
		& \le \int_s^t \int\!\!\!\int
		[A(\cdot,y)-\widetilde A(\cdot,\widetilde{y})]_{\Lip}\,
		\p_x M(x,y,\tau)\,
		b_\Delta(x-\widetilde x)
		\,\dd  x\,\dd  \widetilde x\,\dd \tau \\
		& = (t-s)\,[A(\cdot,y)-\widetilde A(\cdot,\widetilde{y})]_{\Lip}.
	\end{aligned}
\end{equation}

We substitute this bound into \eqref{e:commonest} and take $\Delta\to 0+$, to obtain 
\begin{equation}
	\label{e:Kruzkovstep1}
	\begin{aligned}
		& \int \|M(\cdot,y,t)-\widetilde M(\cdot, \widetilde{y},t)\|_{L^1(\mathbb{R})} \, \bpi(\dy, \dd\widetilde{y}) \\
		& \quad \le \int \big( \|M(\cdot,y,s)-\widetilde M(\cdot, \widetilde{y},s)\|_{L^1(\mathbb{R})}
		+ (t-s) [A(\cdot,y)-\widetilde A(\cdot,\widetilde{y})]_{\Lip} \big) \,\bpi(\dy, \dd\widetilde{y})
		+ \limsup_{\Delta\to 0+} \int_s^t \mathcal{N}_\Delta(\tau) \, \dtau.
	\end{aligned}
\end{equation}
It remains to estimate the nonlocal term.  We turn to this presently.  From now on we will suppress dependence on the time $\tau$, since the latter plays no role in the subsequent computations. We introduce the change of variables
\[
x_\pm:=\frac{x\pm \widetilde x}{2},
\qquad 
\p_x=\frac12(\p_{x_+}+\p_{x_-}),
\qquad
\p_{\widetilde x}=\frac12(\p_{x_+}-\p_{x_-}),
\]
so that $\mathcal{N}_\Delta$ becomes
\begin{equation}\label{eq:BP-nonlocal-xpm}
	\begin{aligned}
		\mathcal{N}_\Delta & = \int\!\!\!\int\!\!\!\int
		\Big[
		\big(
		B[\nu,M](x_+ + x_-, y)
		+
		B[\widetilde\nu,\widetilde M](x_+ - x_-,\widetilde{y})
		\big)\p_{x_+}|M-\widetilde M| 
		\\
		&  \quad \qquad \quad + 
		\big(
		B[\nu,M](x_+ + x_-,y)
		-
		B[\widetilde\nu,\widetilde M](x_+ - x_-,\widetilde{y})
		\big)\p_{x_-}|M-\widetilde M|
		\Big] 
		b_\Delta(2x_-)\,\dd  x_+\,\dd  x_-\,\;\bpi(\dd  y,\dd  \widetilde{y}).
	\end{aligned}
\end{equation}

Letting $\Delta\to0+$ yields
\begin{equation}\label{eq:BP-nonlocal-diagonal-xpm}
	\limsup_{\Delta\to0^+}
	\mathcal{N}_\Delta \le
	\frac12 ( I + II ),
\end{equation}
where 
\begin{equation}\label{eq:BP-I-II-def}
	\begin{aligned}
		I
		&:=
		\int\!\!\!\int
		\Big(
		B[\nu,M](x,y)+B[\widetilde\nu,\widetilde M](x,\widetilde{y})
		\Big)\,
		\p_1|M(x,y)-\widetilde M(x,\widetilde{y})|
		\,\dd  x\,\bpi(\dd  y,\dd  \widetilde{y}),
		\\
		II
		&:=
		\int\!\!\!\int
		\Big(
		\p_1M(x,y)+\p_1\widetilde M(x,\widetilde{y})
		\Big)
		\Big|
		B[\nu,M](x,y)-B[\widetilde\nu,\widetilde M](x,\widetilde{y})
		\Big|
		\,\dd  x\,\bpi(\dd  y,\dd  \widetilde{y}).
	\end{aligned}
\end{equation}

\medskip

We estimate $I$, starting by manipulating the part of the integrand in parentheses.  We obtain
\[
\begin{aligned}
	B[\nu,M](x,y)+B[\widetilde\nu,\widetilde M](x,\widetilde{y})
	& =
	\int\!\!\!\int
	\Big[
	\Phi(x-\xi,y-\eta)\,\p_1 M(\xi,\eta)
	+
	\Phi(x-\xi,\widetilde{y}-\widetilde{\eta})\,\p_1\widetilde M(\xi,\widetilde{\eta})
	\Big]
	\,\dd \xi\,\bpi(\dd \eta,\dd \widetilde{\eta})\\
	& =
	\frac12
	\int\!\!\!\int
	\Big(
	\Phi(x-\xi,y-\eta)-\Phi(x-\xi,\widetilde{y}-\widetilde{\eta})
	\Big)
	\Big(
	\p_1 M(\xi,\eta)-\p_1\widetilde M(\xi,\widetilde{\eta})
	\Big)
	\,\dd \xi\,\bpi(\dd \eta,\dd \widetilde{\eta})
	\\
	& \quad
	+
	\frac12
	\int\!\!\!\int
	\Big(
	\Phi(x-\xi,y-\eta)+\Phi(x-\xi,\widetilde{y}-\widetilde{\eta})
	\Big)
	\Big(
	\p_1M(\xi,\eta)+\p_1\widetilde M(\xi,\widetilde{\eta})
	\Big)
	\,\dd \xi\,\bpi(\dd \eta,\dd \widetilde{\eta}),
\end{aligned}
\]
where we have used the elementary identity $a_1a_2+b_1b_2=\frac12\big[(a_1-b_1)(a_2-b_2)+(a_1+b_1)(a_2+b_2)\big]$.

\medskip

Inserting this decomposition into the expression for $I$, we may write 
\[
I = I^{a}+I^{b},
\]
where
\begin{align*}
	I^a
	& = 
	\frac12
	\int\!\!\!\int\!\!\!\int\!\!\!\int \bigg[ 
	\Big(
	\Phi(x-\xi,y-\eta)-\Phi(x-\xi,\widetilde{y}-\widetilde{\eta})
	\Big) \\
	& \qquad \qquad \qquad \qquad \cdot 
	\Big(
	\p_1M(\xi,\eta)-\p_1\widetilde M(\xi,\widetilde{\eta})
	\Big) \p_1|M(x,y)-\widetilde M(x,\widetilde{y})| \bigg] 
	\,\dd  x\,\dd \xi\,\bpi(\dd  y,\dd  \widetilde{y})
	\,\bpi(\dd \eta,\dd \widetilde{\eta})
\end{align*}

and
\begin{align*} 
	I^{b}
	& =
	\frac12
	\int\!\!\!\int\!\!\!
	\int\!\!\!\int
	\bigg[ \Big(
	\Phi(x-\xi,y-\eta)+\Phi(x-\xi,\widetilde{y}-\widetilde{\eta})
	\Big) \\
	& \qquad \qquad \qquad \qquad \cdot 
	\big(
	\p_1 M(\xi,\eta)+\p_1\widetilde M(\xi,\widetilde{\eta})
	\big) \p_1|M(x,y)-\widetilde M(x,\widetilde{y})| \bigg] \,\dx \,\dd \xi\,\bpi(\dd \eta,\dd \widetilde{\eta})
	\,\bpi(\dd  y,\dd  \widetilde{y}).
\end{align*}

The quantity $I^b$ will ultimately cancel with a corresponding term from $II$; on the other hand, $I^a$ admits a very natural estimate:
\[
\begin{aligned}
	|I^{a}|
	& \le  \int\!\!\! \int \bigg[ R(T) 
	\|\n_2 \phi \|_{L^\infty} \big| (y - \eta) - (\widetilde{y} - \widetilde{\eta}) \big| \\
	& \qquad\qquad  \cdot \bigg( \int 
	\big(\p_1 M(\xi,\eta)+\p_1\widetilde M(\xi,\widetilde{\eta})\big) \,\dd \xi \bigg) \bigg( \int \big( \partial_1 M(x,y) + \partial_1\widetilde M(x,\widetilde{y}) \big) \,\dx \bigg) \bigg] \,\bpi(\dd \eta,\dd \widetilde{\eta})\,\bpi(\dd  y,\dd  \widetilde{y}) \\
	& \le 8 R(T) \|\n_2 \phi\|_{L^\infty} W_1(\nu, \widetilde{\nu})
\end{aligned}
\]

We next estimate $II$.  Reasoning as we did for $I$, we begin by writing 
\[
\begin{aligned}
	B[\nu,M](x,y)-B[\widetilde\nu,\widetilde M](x,\widetilde{y})
	& =
	\int\!\!\!\int
	\Big[
	\Phi(x-\xi,y-\eta)\,\p_1 M(\xi,\eta)
	-
	\Phi(x-\xi,\widetilde{y}-\widetilde{\eta})\,\p_1\widetilde M(\xi,\widetilde{\eta})
	\Big]
	\,\dd \xi\,\bpi(\dd \eta,\dd \widetilde{\eta})
	\\
	& = \frac12
	\int\!\!\!\int
	\big(
	\Phi(x-\xi,y-\eta)-\Phi(x-\xi,\widetilde{y}-\widetilde{\eta})
	\big)
	\big(
	\p_1 M(\xi,\eta)+\p_1\widetilde M(\xi,\widetilde{\eta})
	\big)
	\,\dd \xi\,\bpi(\dd \eta,\dd \widetilde{\eta})
	\\
	&\quad
	+\frac12
	\int\!\!\!\int
	\big(
	\Phi(x-\xi,y-\eta)+\Phi(x-\xi,\widetilde{y}-\widetilde{\eta})
	\big)
	\big(
	\p_1M(\xi,\eta)-\p_1\widetilde M(\xi,\widetilde{\eta})
	\big)
	\,\dd \xi\,\bpi(\dd \eta,\dd \widetilde{\eta}),
\end{aligned}
\]
We may thus write
\[
II = II^{a}+II^{b},
\]
where $|II^a|$ obeys exactly the same estimate that $|I^a|$ does, and $II^b \le  -I^b$.  To establish this second fact, we note that since $\Phi$ is nondecreasing in its first argument, we have 
\[
\int
\Phi(x-\xi,\cdot)
\big(
\p_1 M(\xi,\eta)-\p_1 \widetilde M(\xi,\widetilde{\eta})
\big)
\,\dd \xi
\le 
\int
\Phi(x-\xi,\cdot)
\p_1\big|M(\xi,\eta)-\widetilde M(\xi,\widetilde{\eta})\big|
\,\dd \xi.
\]
Using this estimate in the second line below, we see that 
\begin{align*} 
	II^{b} & =
	\frac12
	\int\!\!\!\int\!\!\!\int\!\!\!\int \bigg[ 
	\big(
	\Phi(x-\xi,y-\eta)+\Phi(x-\xi,\widetilde{y}-\widetilde{\eta})
	\big)\\
	& \qquad \qquad \qquad \qquad \cdot \big(
	\p_1M(x,y)+\p_1\widetilde M(x,\widetilde{y})
	\big)
	\p_1\big(M(\xi,\eta)-\widetilde M(\xi,\widetilde{\eta})\big) \bigg] 
	\,\dd  x\,\dd \xi\,\bpi(\dd \eta,\dd \widetilde{\eta})\,\bpi(\dd  y,\dd  \widetilde{y}) \\
	& \le 
	\frac12
	\int\!\!\!\int\!\!\!\int\!\!\!\int \bigg[ 
	\big(
	\Phi(x-\xi,y-\eta)+\Phi(x-\xi,\widetilde{y}-\widetilde{\eta})
	\big)\\
	& \qquad \qquad \qquad \qquad \cdot \big(
	\p_1M(x,y)+\p_1\widetilde M(x,\widetilde{y})
	\big)
	\p_1\big|M(\xi,\eta)-\widetilde M(\xi,\widetilde{\eta})\big| \bigg] 
	\,\dd  x\,\dd \xi\,\bpi(\dd \eta,\dd \widetilde{\eta})\,\bpi(\dd  y,\dd  \widetilde{y}).
\end{align*} 
After  interchanging $(x,y,\widetilde{y})\leftrightarrow(\xi,\eta,\widetilde{\eta})$, and recalling that $\Phi$ is odd in its first argument and even in its second, we see that the quantity in the last line above is exactly $-I^b$, justifying our claim.

Putting together all the estimates above, we see that  
\[
\limsup_{\Delta\to0^+}
\mathcal{N}_\Delta \le
\frac12 ( |I^a| + |II^a| + I^b + II^b )
\le
8 R(T)\,\|\n_2\phi\|_{L^\infty}\,W_1(\nu,\widetilde\nu).
\]
Together with \eqref{e:Kruzkovstep1}, this estimate completes the proof of \eqref{eq:stab-pi}.

\medskip

\subsection{Second stability estimate}
\label{ss:stab2}

As we prove our second stability estimate, we will perform our analysis with fixed $\Delta>0$ rather than taking $\Delta\to 0+$.  We will ultimately be estimating the same quantity as before, namely $\cD(t)$ as defined in \eqref{eq:BP-E}. However, we will first work with the related quantity $\mathcal{D}_\Delta(t)$, which is defined in \eqref{eq:BP-E-EDelta} and appears on the left side of \eqref{e:commonest}.
The discrepancy between $\mathcal{D}(t)$ and $\mathcal{D}_\Delta(t)$ is of order~$\Delta$ (see Lemma \ref{l:E-EDelta} below), so we may use our estimates on $\mathcal{D}_\Delta$ to write down an inequality for $\mathcal{D}(t)$ in terms of quantities involving $\Delta$ and $\frac{1}{\Delta}$.  We will ultimately make a choice of $\Delta$ to balance these two contributions and obtain \eqref{eq:main-unidir-optimal}; however, we will write our argument with arbitrary $\Delta>0$ in order to keep its role transparent.  The main idea of this  strategy is inspired by work of Bouchut and Perthame \cite{BouchutPerthame1998}.

We collect a few basic estimates in the following Lemma before turning to the main thrust of our proof.  (These can also essentially be found in \cite{BouchutPerthame1998}.)

\begin{LEMMA}
	\label{l:E-EDelta}
	The following estimates hold:
	\begin{equation}\label{eq:BP-E-EDelta-bound}
		|\mathcal{D}(\tau)-\mathcal{D}_\Delta(\tau)| \le \Delta
		;
	\end{equation}
	\begin{equation}\label{eq:BP-bDelta-prime-bound}
		\Bigg|
		\int\!\!\!\int
		|M(x,y,\tau)-\widetilde M(\widetilde x,\widetilde{y},\tau)|\,b_{\Delta}'(x-\widetilde x)\,\dx\,\dd \widetilde x
		\Bigg|
		\le C.
	\end{equation}
	
\end{LEMMA}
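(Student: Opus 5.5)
For \eqref{eq:BP-E-EDelta-bound}, I will fix $(y,\widetilde y)$ and $\tau$ and work slice by slice, integrating against $\bpi$ only at the end; $\bpi$ is a probability measure, so a slicewise bound of $\Delta$ is enough. Since $\int b_\Delta = 1$, the $\bpi$-integrand of $\cD(\tau)$ equals $\int\!\!\int |M(x,y)-\widetilde M(x,\widetilde y)|\,b_\Delta(x-\widetilde x)\,\dx\,\dd\widetilde x$. By the reverse triangle inequality the slicewise difference is then at most
\[
\int\!\!\int |\widetilde M(x,\widetilde y)-\widetilde M(\widetilde x,\widetilde y)|\,b_\Delta(x-\widetilde x)\,\dx\,\dd\widetilde x .
\]
Substitute $h = x-\widetilde x$. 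The monotonicity of $\widetilde M(\cdot,\widetilde y)$ and its values $0$ and $1$ at $\mp\infty$ (Definition \ref{def:entropysoln}(a)) give
\[
\int_{\mathbb{R}}|\widetilde M(\widetilde x+h,\widetilde y)-\widetilde M(\widetilde x,\widetilde y)|\,\dd\widetilde x = |h|.
\]
This is the $L^1$ translation identity for a distribution function of a probability measure, and it follows from Fubini applied to $\rho_{\widetilde y}((\widetilde x,\widetilde x+h])$. Since $\supp b_\Delta\subset(-\Delta,\Delta)$, integrating $|h|\,b_\Delta(h)$ gives at most $\Delta$. Integrating over $\bpi$ yields \eqref{eq:BP-E-EDelta-bound}.

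For \eqref{eq:BP-bDelta-prime-bound}, again at fixed slices, I will avoid differentiating $|M-\widetilde M|$, since that would only produce total variation bounded by $2$, which also works but needs the BV chain rule. The simplest route is to use $\int b'_\Delta = 0$. For each fixed $\widetilde x$, subtract $|\widetilde M(\widetilde x)-\widetilde M(\widetilde x)|=0$; more usefully, write $|M(x)-\widetilde M(\widetilde x)|$ and integrate by parts in $x$, i.e. move the derivative off $b'_\Delta(x-\widetilde x)=\partial_x b_\Delta(x-\widetilde x)$. By Lemma \ref{l:BVChain}, $|\partial_x|M(x,y)-\widetilde M(\widetilde x,\widetilde y)||\le \partial_x M(x,y)$ as measures, and $\partial_x M$ has total mass $1$. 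Hence the $x$-integral is bounded by $\int b_\Delta(x-\widetilde x)\,\partial_xM(\dd x,y)$. Integrating in $\widetilde x$ then gives exactly $\int\partial_x M(\dd x,y)=1$. So the constant can be taken to be $C=1$, uniformly in $\Delta$, the slices, and $\tau$. The boundary terms vanish because $b_\Delta$ is compactly supported and $M$ is bounded.

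The only delicate point is justifying the integration by parts with the BV function $x\mapsto|M(x,y)-\widetilde M(\widetilde x,\widetilde y)|$ for $\nu$-a.e.\ $y$. This follows from monotonicity of $M(\cdot,y)$, which makes it locally BV, together with Lemma \ref{l:BVChain} applied with the Lipschitz map $m\mapsto|m-\widetilde m|$. Measurability in $(y,\widetilde y)$ for the final $\bpi$-integration is routine.
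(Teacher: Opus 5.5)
Your proof is correct. For \eqref{eq:BP-E-EDelta-bound} it is the paper's argument: rewrite the integrand of $\cD$ using $\int b_\Delta=1$, apply the reverse triangle inequality, substitute $h=x-\widetilde x$, and use the translation estimate \eqref{e:Mdhest}.

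For \eqref{eq:BP-bDelta-prime-bound} you take a different route. The paper never differentiates $|M-\widetilde M|$. It uses $\int b_\Delta'(x-\widetilde x)\,\dx=0$ to subtract the $x$-independent quantity $|M(\widetilde x,y,\tau)-\widetilde M(\widetilde x,\widetilde y,\tau)|$. The reverse triangle inequality then leaves only $\int\!\!\int|M(x,y,\tau)-M(\widetilde x,y,\tau)|\,|b_\Delta'(x-\widetilde x)|\,\dx\,\dd\widetilde x$, which \eqref{e:Mdhest} bounds by $\int|h|\,|b_\Delta'(h)|\,\dh=\int|s|\,|b'(s)|\,\dd s$.

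The quantity you propose to subtract, $|\widetilde M(\widetilde x)-\widetilde M(\widetilde x)|=0$, does nothing. That sentence, and the announcement that you will avoid differentiating $|M-\widetilde M|$, should be deleted; the argument you actually carry out stands on its own. You integrate by parts in $x$ and use that the variation measure of $x\mapsto|M(x,y,\tau)-\widetilde m|$ is dominated by $\partial_x M(\cdot,y,\tau)$, which has mass $1$.

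The trade-off is as follows. The paper's version is purely elementary and reuses the single tool \eqref{e:Mdhest} from the first estimate. Yours needs the BV chain rule, which is harmless here since in one variable with $M(\cdot,y,\tau)$ monotone it is elementary. In return you get the explicit constant $C=1$, independent of the mollifier profile. This is never worse than the paper's constant, because $\int|s|\,|b'(s)|\,\dd s\ge\bigl|\int s\,b'(s)\,\dd s\bigr|=\int b=1$.

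Both arguments bound the inner $x$-integral pointwise in $\widetilde x$, with $\widetilde x$-integral uniformly bounded. That is the form in which the estimate is reused for $\cN_\Delta^{(c)}$, so your version would serve there equally well.
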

\begin{proof} 
	We use repeatedly the fact that if $F:\R\to \R$ is the cumulative distribution function of a probability measure, then 
	\begin{equation}
		\label{e:Mdhest}
		\int_\R \big( F(x+h) - F(x)\big)\, \dx \le |h|. 
	\end{equation}
	To prove \eqref{eq:BP-E-EDelta-bound}, we estimate as follows:
	\begin{align*}
		|\mathcal{D}(\tau)-\mathcal{D}_\Delta(\tau)|
		& = \bigg| \int\!\!\!\int\!\!\!\int \big( |M(x,y,\tau) - \widetilde{M}(x,y,\tau)| - |M(x,y,\tau) - \widetilde{M}(\widetilde{x},\widetilde{y},\tau)|\big) b_\Delta(x-\widetilde{x}) \,\dx \, \dd \widetilde{x} \,\bpi(\dy, \dd\widetilde{y}) \bigg| \\
		& \le
		\int\!\!\!\int\!\!\!\int
		|\widetilde M(x,y,\tau)-\widetilde M(\widetilde x,\widetilde{y},\tau)|
		\,b_\Delta(x-\widetilde x)\,\dx\,\dd \widetilde x\;\bpi(\dd  y,\dd  \widetilde{y}) \\
		& = \int \!\!\!\int \bigg( \int \big| \widetilde{M}(\widetilde{x}+h,y,\tau) - \widetilde{M}(\widetilde{x},\widetilde{y},\tau)\big| \,\dd \widetilde{x} \bigg) b_\Delta(h)\,\dh \,\bpi(\dy, \dd\widetilde{y}) \\
		& \le
		\int_\RR |h|\,b_\Delta(h)\,\dd  h \le \Delta.
	\end{align*}
	
	We turn to the proof of \eqref{eq:BP-bDelta-prime-bound}.  We use the fact that $\int b_\Delta'(x-\widetilde{x})\,\dx = 0$ in order to pass to the second line below.
	\begin{align*} 
		& \Bigg|
		\int\!\!\!\int
		|M(x,y,\tau)-\widetilde M(\widetilde x,\widetilde{y},\tau)|\,b_{\Delta}'(x-\widetilde x)\,\dx\,\dd \widetilde x
		\Bigg|\\
		& \qquad = \Bigg|
		\int\!\!\!\int
		\big[ |M(x,y,\tau)-\widetilde M(\widetilde x,\widetilde{y},\tau)|- |M(\widetilde x,y,\tau)-\widetilde M(\widetilde x,\widetilde{y},\tau)|\big] \,b_{\Delta}'(x-\widetilde x)\,\dx\,\dd \widetilde x
		\Bigg|\\
		& \qquad \le
		\int\!\!\!\int
		|M(x,y,\tau)-M(\widetilde x,y,\tau)|\,|b'_\Delta(x-\widetilde x)|
		\,\dx\,\dd \widetilde x \\
		& \qquad \le \int |h| |b_\Delta'(h)| \,\dh = C.
	\end{align*} 
\end{proof}

We now begin the proof of \eqref{eq:main-unidir-optimal} in earnest, starting with the following simple estimate of the flux term:  
\begin{equation} 
	\label{e:FDeltaest}
	\begin{split} 
		|\cF_\Delta(\tau)| 
		& \le 
		\bigg( \int 2 \|A(\cdot, y) - \widetilde{A}(\cdot, \widetilde{y})\|_{L^\infty} \bpi(\dy, \dd\widetilde{y}) \bigg) \bigg( \int_{-R(T)-\Delta}^{R(T)+\Delta} \int |b_\Delta'(x-\widetilde{x})| \, \dx \, \dd  \widetilde{x} \bigg) \le C\bigg( \frac{ R(T)}{\Delta} + 1\bigg) \cA_\infty.
	\end{split}
\end{equation} 

Almost all of the remainder of our proof is dedicated to estimating the nonlocal term $\cN_\Delta$. Integrating by parts in \eqref{e:NL}, we may write 
\begin{equation}\label{eq:BP-NDelta-split}
	\mathcal N_{\Delta}
	=
	\mathcal N_{\Delta}^{(a)}+\mathcal N_{\Delta}^{(b)}
	+ N_{\Delta}^{(c)},
\end{equation}
where (noting that $\partial_x B[\nu,M] = \phi*(\nu \partial_1 M)$)
\begin{align*}
	\mathcal N_{\Delta}^{(a)}
	& =
	-\int\!\!\!\int\!\!\!\int
	\Big(
	(\phi*(\nu\,\p_1M))(x,y)
	+
	(\phi*(\widetilde\nu\,\p_1\widetilde M))(\widetilde x,\widetilde{y})
	\Big)
	|M(x,y)-\widetilde M(\widetilde x,\widetilde{y})|\,b_{\Delta}(x-\widetilde{x})
	\,\dx\,\dd \widetilde x\;\bpi(\dd  y,\dd  \widetilde{y}),
	\\
	\mathcal N_{\Delta}^{(b)}
	& =-\int\!\!\!\int\!\!\!\int
	\big(
	B[\nu,M](x,y)-B[\nu,M](\widetilde x,y)
	\big)
	|M(x,y)-\widetilde M(\widetilde x,\widetilde{y})|\,b'_{\Delta}(x-\widetilde{x})
	\,\dx\,\dd \widetilde x\;\bpi(\dd  y,\dd  \widetilde{y}), \\
	\mathcal N_{\Delta}^{(c)}
	& =-\int\!\!\!\int
	\big(
	B[\nu,M](\widetilde{x},y)-B[\widetilde\nu,\widetilde M](\widetilde x,\widetilde{y})
	\big) \bigg( \int 
	|M(x,y)-\widetilde M(\widetilde x,\widetilde{y})|   \,b'_{\Delta}(x-\widetilde{x})
	\,\dx\bigg) \,\dd \widetilde x\;\bpi(\dd  y,\dd  \widetilde{y}).
\end{align*} 

The first of these can be estimated easily: 
\begin{equation}\label{eq:BP-NDelta-a-est}
	|\mathcal N_{\Delta}^{(a)}(\tau)|
	\le
	2\|\phi\|_{L^\infty}\,\mathcal{D}_{\Delta}(\tau).
\end{equation}
The second requires a bit of manipulation but is still straightforward:
\begin{align*}
	|\cN_\Delta^{(b)}(\tau)| & \le \int\!\!\!\int \big( \|\phi\|_{L^\infty} |h|\big) \bigg( \int \big[ |M(x+h,y,\tau)  - M(x,y,\tau)| + |M(x,y,\tau) -  \widetilde{M}(x,\widetilde{y},\tau)|\, \dx \bigg)  |b_\Delta'(h)| \, \dh \, \bpi(\dy, \dd\widetilde{y}) \\
	& \le \|\phi\|_{L^\infty}\int\!\!\!\int \bigg( |h| + \|M(\cdot, y,\tau) - \widetilde{M}(\cdot, \widetilde{y},\tau)\|_{L^1} \bigg) |h b_\Delta'(h)| \,\dh \,\bpi(\dy, \dd\widetilde{y}) \\
	& \le C \|\phi\|_{L^\infty} \big( \Delta + \mathcal{D}(\tau) \big) 
\end{align*}

To estimate $\cN_\Delta^{(c)}$,we note first of all that 
\begin{align*}
	\big|B[\nu, M](\widetilde{x},y,\tau) - B[\widetilde{\nu}, \widetilde{M}](\widetilde{x},\widetilde{y},\tau) \big| 
	& \le \bigg| \int\!\!\!\int \big[ \Phi(\widetilde{x}-\xi, y-\eta) \partial_1 M(\xi, \eta,\tau) - \Phi(\widetilde{x}-\xi, \widetilde{y}-\widetilde{\eta}) \partial_1 \widetilde{M}(\xi, \widetilde{\eta},\tau)\big] \,\dd \xi\,\bpi(\dd \eta, \dd \widetilde{\eta})\bigg| \\
	& \le 2 R(T) \|\n_2 \phi\|_{L^\infty} \big( |y-\widetilde{y}| + W_1(\nu, \widetilde{\nu}) \big) 
	+ \|\phi\|_{L^\infty} \mathcal{D}(\tau).
\end{align*}
We also note that the integral over $x$ in $\cN_\Delta^{(c)}$ can be treated using the same trick as in the proof of \eqref{e:Mdhest}:
\begin{align*} 
	\bigg| \int 
	|M(x,y)-\widetilde M(\widetilde x,\widetilde{y})|   \,b'_{\Delta}(x-\widetilde{x})
	\,\dx\bigg| 
	& = \bigg| \int 
	\big[ |M(x,y)-\widetilde M(\widetilde x,\widetilde{y})| - |M(\widetilde{x},y) - \widetilde{M}(\widetilde{x},\widetilde{y})|\big] \,b'_{\Delta}(x-\widetilde{x})
	\,\dx\bigg| \\
	& \le \int |M(x,y) - M(\widetilde{x},y)||b_\Delta'(x-\widetilde{x})| \,\dx.
\end{align*} 
The integral over $\widetilde{x}$ of this last quantity is uniformly bounded.  Therefore, we may write 
\begin{align*}
	|\cN_\Delta^{(c)}(\tau)|
	& \le C R(T) \|\n_2 \phi\|_{L^\infty} W_1(\nu, \widetilde{\nu}) + C \|\phi\|_{L^\infty} \mathcal{D}(\tau).
\end{align*}

Combining the estimates for $\cN_\Delta^{(a)}$, $\cN_\Delta^{(b)}$, and $\cN_\Delta^{(c)}$, and using \eqref{eq:BP-E-EDelta-bound}, we get
\begin{equation}\label{eq:BP-NDelta-final-est}
	|\mathcal N_{\Delta}(\tau)|
	\le
	C\|\phi\|_{L^\infty}\big(\mathcal{D}_\Delta(\tau) + \Delta \big) 
	+ C R(T)\|\n_2\phi\|_{L^\infty} W_1(\nu,\widetilde\nu).
\end{equation}

We are now in a position to finish the argument.  Substituting the estimates \eqref{e:FDeltaest} and \eqref{eq:BP-NDelta-final-est} into \eqref{e:commonest} yields
\begin{equation}\label{eq:BP-EDelta-integral}
	\begin{aligned}
		\mathcal{D}_{\Delta}(t)
		&\le
		\mathcal{D}_{\Delta}(s)
		+
		C (t-s) \bigg[ 
		\bigg( \!\frac{R(T)}{\Delta}\!+ 1\!\bigg) \cA_\infty
		+ \|\phi\|_{L^\infty} \Delta + R(T) \|\n_2 \phi\|_{L^\infty} W_1(\nu, \widetilde{\nu}) \bigg] 
		\!+ C\|\phi\|_{L^\infty}\! \int_s^t \!\mathcal{D}_{\Delta}(\tau) \,\dtau,
	\end{aligned}
\end{equation}

Applying Gr\"onwall's inequality gives
\[
\begin{aligned}
	\mathcal{D}_{\Delta}(t)
	&\le
	\mathcal{D}_{\Delta}(s)e^{C\|\phi\|_{L^\infty}(t-s)}
	+
	\bigg[ \bigg( \frac{R(T)}{\Delta} +1 \bigg) \cA_\infty
	+\|\phi\|_{L^\infty}\,\Delta
	+R(T)\|\n_2\phi\|_{L^\infty}\,W_1(\nu,\widetilde\nu)
	\bigg]
	\frac{e^{C\|\phi\|_{L^\infty}(t-s)}-1}{\|\phi\|_{L^\infty}}\\
	& \le e^{C\|\phi\|_{L^\infty}(t-s)}\bigg( \mathcal{D}_{\Delta}(s)
	+
	C(t-s)\bigg[ \bigg( \frac{R(T)}{\Delta} +1 \bigg) \cA_\infty
	+R(T)\|\n_2\phi\|_{L^\infty}\,W_1(\nu,\widetilde\nu)
	\bigg]\bigg) + ( e^{C\|\phi\|_{L^\infty}(t-s)}-1)\Delta
\end{aligned}
\]

Using \eqref{eq:BP-E-EDelta-bound}, we obtain
\begin{equation}\label{eq:BP-E-before-opt}
	\begin{aligned}
		\mathcal{D}(t)
		&\le e^{C\|\phi\|_{L^\infty}(t-s)} \bigg( 
		\mathcal{D}(s)
		+ 2 \Delta 
		+ C(t-s) \bigg[ \bigg( \frac{R(T)}{\Delta} + 1\bigg)  \,\mathcal A_\infty + R(T)\|\n_2\phi\|_{L^\infty} W_1(\nu,\widetilde\nu) \big)\bigg] \bigg).
	\end{aligned}
\end{equation}

We obtain \eqref{eq:main-unidir-optimal} after choosing $\Delta = \sqrt{R(T)\cA_\infty (t-s)}$ and adjusting the constant $C$ appropriately.

\section{Existence of entropy solutions}

\label{s:Existence}

We now turn to the matter of existence.  Our main result toward this end can be stated quite simply:
\begin{THEOREM}
	\label{t:existence} 
	Assume $\phi$ satisfies \eqref{e:phieven} and \eqref{e:phiLip2}. Let $(M^0, A, \nu)$ be an admissible triple and fix $T>0$.  There exists a unique entropy solution $M\in C([0,T); L^1_{\loc}(\mathbb{R}\times \mathbb{R}^{d-1}, \dx\otimes \nu))$ of \eqref{e:ScalarBalance} on $[0,T)$ associated to $(M^0, A, \nu)$.      
\end{THEOREM}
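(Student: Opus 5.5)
Uniqueness is immediate from Theorem \ref{th:uniqueness-main}. If $M,\widetilde{M}$ are two entropy solutions for the same triple, take $\bpi=(\mathrm{id},\mathrm{id})_\sharp\nu$, so that $W_1(\nu,\nu)=0$ and the flux differences vanish. Applying \eqref{eq:stab-pi} with $s\to 0+$ and using \eqref{e:MIC} gives $\cD(t)=0$, hence $M=\widetilde{M}$ in $L^1(\dx\otimes\nu)$ for every $t$.

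Existence will be proved in the two steps announced in the introduction.

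\textbf{Step 1: atomic $\nu$.} Suppose $\nu=\sum_{j=1}^J \nu_j\delta_{y_j}$. Then \eqref{e:ScalarBalance} is a finite system of 1D balance laws coupled only through $B$. I would adapt the construction of \cite{LeslieTan2023}:
\begin{itemize}
\item discretize each slice's initial CDF $M^0(\cdot,y_j)$ and its flux $A(\cdot,y_j)$ into finitely many particles;
\item run sticky particle Cucker--Smale dynamics, where particles interact across slices through $\phi$ but collide only within their own slice;
\item check that the discrete $M^N$ satisfy the entropy inequalities up to errors vanishing as $N\to\infty$.
\end{itemize}
Compactness comes from monotonicity in $x$, uniform support bounds (velocities stay bounded by the maximum principle for alignment), and $L^1$ time equicontinuity. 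Helly's theorem, applied slice by slice, gives a limit. Passing to the limit in the nonlocal term $B[\nu,M]\partial_x|M-\alpha|$ is handled as in \cite{LeslieTan2023}, since $\Phi$ is Lipschitz in $x$.

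\textbf{Step 2: general $\nu$.}
\begin{itemize}
\item Partition a cube containing $\supp\nu$ into cells $Q_i^k$ of side $2^{-k}$ with centers $y_i^k$, and set $\nu_k=\sum_i\nu(Q_i^k)\delta_{y_i^k}$. Then $W_1(\nu_k,\nu)\le C2^{-k}$.
\item Define $M^{0,k}(\cdot,y_i^k)$ and $A^k(\cdot,y_i^k)$ as $\nu$-averages of $M^0(\cdot,y)$ and $A(\cdot,y)$ over $Q_i^k$. Averages of nondecreasing CDFs are nondecreasing CDFs, and the Lipschitz bounds are preserved, so these triples are admissible with a common $R^0$.
\item Use the natural coupling $\bpi_k$ sending $y\in Q_i^k$ to $y_i^k$. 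By Lebesgue differentiation, or density of continuous functions in $L^1(\nu;L^1(\R))$ and $L^1(\nu;L^\infty([0,1]))$,
\[
\int\|M^0(\cdot,y)-M^{0,k}(\cdot,\widetilde{y})\|_{L^1}\,\bpi_k\to0,\qquad \int\|A(\cdot,y)-A^k(\cdot,\widetilde{y})\|_{L^\infty}\,\bpi_k\to0.
\]
Measurability in $y$ is the reason $L^\infty$ (not Lipschitz) convergence is all one can hope for here.
\item Let $M^k$ be the atomic solutions from Step 1.
\end{itemize}

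To compare $M^k$ and $M^l$, compose couplings through $\nu$. Define $\bpi_{kl}$ by gluing $\bpi_k$ and $\bpi_l$ along $\nu$. This coupling is not optimal, so I would first check that the proof of \eqref{eq:main-unidir-optimal} never used optimality, only a coupling cost bounding $W_1$; the $W_1$ terms then get replaced by $\int|y-\widetilde{y}|\,\dd\bpi_{kl}\le C2^{-\min(k,l)}$. Estimate \eqref{eq:main-unidir-optimal} then shows that the lifted functions $\overline{M}^k(x,y,t):=M^k(x,y_i^k,t)$ for $y\in Q_i^k$ form a Cauchy sequence in $C([0,T);L^1(\dx\otimes\nu))$. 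Call the limit $M$. Monotonicity and the support bound $R(T)$ pass to the limit; this needs a uniform-in-$k$ $R(T)$, which follows from bounded velocities $\psi-\Phi*\rho$.

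\textbf{Main obstacle: passing the entropy inequality \eqref{eq:entropy-M} to the limit.} The $M^k$ satisfy \eqref{eq:entropy-M} with $\nu_k$ and test functions evaluated at $y_i^k$, so I would rewrite those inequalities as integrals against $\nu$ of the lifted functions. The test function error is $O(2^{-k}\|\nabla_y\zeta\|_\infty)$, and the flux term converges by the $L^1$–$L^\infty$ convergence of $A^k$. The delicate term is $\zeta B[\nu_k,M^k]\partial_x|M^k-\alpha|$, a product of a measure in $x$ with $B$. Since $B$ is Lipschitz in $x$ (bounded $\phi$), I would integrate by parts into
\[
-\int|M^k-\alpha|\,\partial_x(\zeta B).
\]
Here $\partial_xB=\phi*(\nu\partial_1M)$, and $B[\nu_k,M^k]$ converges to $B[\nu,M]$ uniformly on compact sets in $(x,y)$ via \eqref{e:phiLip2} and the $L^1$ convergence. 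The term $\partial_xB^k$ converges only weakly in $x$. Since $\phi$ is merely bounded in $x$, I would either smooth $\phi$ in $x$ via a stability argument that only uses $\|\phi\|_{L^\infty}$, or handle $\int|M^k-\alpha|\,\zeta\,\phi*(\nu_k\partial_1M^k)$ as a bilinear pairing of strongly convergent $|M^k-\alpha|$ with the mollified measure $\phi*\cdot$. The latter converges pointwise in $x$ because $\int\phi(x-\xi,\cdot)\,\dd M^k(\xi)$ only requires convergence of $M^k$ in $L^1$ for a.e. $x$, after integrating by parts in $\xi$ when $\phi$ has bounded variation in $x$. A general bounded $\phi$ requires an approximation argument for $\phi$ itself, again controlled by the stability estimate. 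This is where I expect the most technical work.
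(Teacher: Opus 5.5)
Your outline follows the paper's proof: uniqueness from \eqref{eq:stab-pi} with the diagonal coupling, the atomic case via SPCS as in \cite{LeslieTan2023}, dyadic averaging with $\nu_k=(r_k)_\sharp\nu$, the Cauchy property from \eqref{eq:main-unidir-optimal}, and passage to the limit in \eqref{eq:entropy-M}.

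One small correction: the glued coupling $(r_k,r_l)_\sharp\nu$ is in fact optimal. For $l\ge k$ the cubes are nested, so $r_k\circ r_l=r_k$, and the coupling equals $(r_k,\mathrm{id})_\sharp\nu_l$. Moreover, $r_k$ sends each point of $\supp\nu_l$ to a nearest point of $\supp\nu_k$, because the cubes are the Voronoi cells of their centers. Hence $\int|w-r_k(w)|\,\nu_l(\dd w)\le\int|w-z|\,\gamma(\dd w,\dd z)$ for every coupling $\gamma$. Your fallback is also valid: optimality enters the proof of \eqref{eq:main-unidir-optimal} only through $\int|y-\widetilde y|\,\bpi(\dy,\dd\widetilde y)=W_1(\nu,\widetilde\nu)$. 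So nothing breaks here.

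The real gap is the step you flag as the main obstacle. You do not close it. The remedies you list need either extra structure ($\phi$ of bounded variation in $x$) or stability under perturbations of $\phi$, which Theorem \ref{th:uniqueness-main} does not provide, since it compares solutions with the same $\phi$. The difficulty comes entirely from integrating by parts the $k$-dependent coefficient. Instead, keep the derivative on $|\widehat M^k-\alpha|$ and differentiate only the fixed limit. With $\widehat M^k(x,y,t)=M^k(x,r_k(y),t)$ and $\widehat B^k(x,y,t)=B[\nu_k,M^k](x,r_k(y),t)$, write
\[
\zeta\widehat B^k\,\partial_x|\widehat M^k-\alpha|-\zeta B[\nu,M]\,\partial_x|M-\alpha|
=\zeta\big(\widehat B^k-B[\nu,M]\big)\,\partial_x|\widehat M^k-\alpha|
+\zeta B[\nu,M]\,\partial_x\big(|\widehat M^k-\alpha|-|M-\alpha|\big),
\]
after replacing $\widehat\zeta^k$ by $\zeta$ at cost $O(2^{-k})$, as you indicate.

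For the first term, note that $\big|\partial_x|\widehat M^k-\alpha|\big|\le\partial_x\widehat M^k$ has mass at most $1$ on each slice. So this term integrates to at most $T\|\zeta\|_{L^\infty}\sup|\widehat B^k-B[\nu,M]|$, which tends to zero by the uniform convergence you already claimed. Taking the supremum over all $x$ is legitimate because both functions are Lipschitz in $x$.

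For the second term, integrate by parts. Only $\partial_x(\zeta B[\nu,M])$ appears, and $|\partial_xB[\nu,M]|=|\phi*(\nu\,\partial_1M)|\le\|\phi\|_{L^\infty}$. The term is therefore bounded by $C_\zeta(\|B[\nu,M]\|_{L^\infty}+\|\phi\|_{L^\infty})\sup_t\|\widehat M^k(t)-M(t)\|_{L^1(\dx\otimes\nu)}\to0$.

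This is the paper's argument. It needs no weak convergence of $\phi*(\nu_k\partial_1M^k)$ and no approximation of $\phi$. It also applies verbatim to the atomic case, which you treated as routine in Step 1.

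Your bilinear-pairing idea can also be rescued without assuming BV. The lift of $\phi*(\nu_k\partial_1M^k)$ is bounded in $L^\infty$ and converges weak-$*$, because $\xi\mapsto\int h(x)\phi(x-\xi,\cdot)\,\dx$ is uniformly continuous for $h\in L^1$. Pointwise convergence is never needed.
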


The rest of this section is dedicated to the proof of Theorem \ref{t:existence}.  Our analysis, however, gives more than just existence---it furnishes an approximation scheme.  We construct our entropy solution $M$ as the limit (in a sense to be described below) of entropy solutions $M^k$ that are discrete in $y$, in the sense that $M^k$ is associated to an admissible triple $(M^{0,k}, A^k, \nu_k)$ such that $\nu_k$ is atomic.  These solutions $M^k$ are constructed in turn as limits of entropy solutions $M^k_N$ which are discrete in both $x$ and~$y$, and which are completely determined by the evolution of what we refer to as the \textit{sticky particle Cucker--Smale} dynamics (SPCS).  The SPCS dynamics require agents to evolve according to the Cucker--Smale ODEs on any time interval during which no collisions occur.  Collisions are required to be completely inelastic: the agents involved stick together for all later times, with a post-collisional velocity determined by conservation of momentum.  

In Section \ref{ss:Existdisc} below, we construct the solutions $M_N^k$ from  SPCS dynamics and obtain $M^k$ as a limit as $N\to \infty$.  (We drop the superscript $k$ in this section to avoid a notational overload.)  We do not need the full strength of the assumption on $\phi$ for our analysis of the discrete-in-$y$ case to go through; only \eqref{e:phiL1locx} is needed at this point. Then, in Section \ref{ss:Existgen}, we take the limit as $k\to \infty$ to obtain~$M$.  Theorem~\ref{thm:existence-main} and the discussion preceding it provide a more precise description of the approximation scheme used to construct the solutions $M^k$, as well as the sense in which they approximate $M$.  We detail rates of convergence in Section \ref{ss:rate} under additional assumptions on $M^0$ and $A$; see also Sections \ref{ss:approximation} and \ref{ss:rate2} for details at the level of the Euler Alignment system.  

In summary, our solutions are constructed with two levels of discretization: one in the directions transverse to the flow, to reduce the dynamics to finitely many slices, and one in the direction of the flow.  The `finite-slice' analysis follows \cite{LeslieTan2023} rather closely (though we provide enough details to make the current work self-contained); the discretization transverse to the flow is new and is our main focus for this section.  With a bit of care, we can approximate our general solutions directly by SPCS dynamics by taking both the number of slices and the number of agents per slice to infinity simultaneously; see sections \ref{ss:rate}, \ref{ss:approximation}, and \ref{ss:rate2} below.

\subsection{Entropy solutions associated to atomic $\nu$}
\label{ss:Existdisc}

We now construct the unique entropy solution associated to any admissible triple $(M^0, A, \nu)$ such that $\nu$ is atomic.  Our analysis closely follows that of \cite{LeslieTan2023} due to the second author and Tan; the approach of the latter was in turn inspired by work of Brenier and Grenier \cite{brenier1998sticky} on the pressureless Euler equations.

\subsubsection{Entropy conditions for piecewise constant solutions}

We begin by recasting the Kruzkov entropy condition \eqref{e:entropydistributional} in a form that will be easier to verify for the piecewise constant solutions we generate using SPCS dynamics.  Equation \eqref{e:RH} and inequality \eqref{e:Oleinik} below together constitute the simplified criteria and will be referred to as `Rankine--Hugoniot' and `Oleinik' conditions, respectively, on our piecewise constant solutions.

Suppose the measure $\nu$ is atomic; that is, assume it takes the form $\nu = \sum_{\ell=1}^L \mu_\ell \delta(\cdot - y_\ell)$.  Suppose additionally that for each $\ell\in \{1, \ldots, L\}$, the function $M(x,y_\ell,t)$ (assumed to be nondecreasing in $x$, as usual) is piecewise constant, with discontinuity curves that are piecewise $C^1$.  Such a function must take the form 
\begin{equation}
	M(x,y_\ell, t) = \sum_{i=1}^{I_\ell} m_{i,\ell} H(x - x_{i,\ell}(t)),
\end{equation}
where $H:\mathbb{R}\to \mathbb{R}$ is the Heaviside function ($H(x) = 1_{[0,\infty)}(x)$) and each $x_{i,\ell}:[0,\infty)\to \mathbb{R}$ is a continuous, piecewise $C^1$ function of $t$.

Write $\eta_\alpha(m) = |m-\alpha|$ and $q_\alpha(m, y_\ell) = \sgn(m-\alpha)(A(m,y_\ell) - A(\alpha, y_\ell))$. One can show that the  Kruzkov entropy condition \eqref{e:entropydistributional} reduces to the requirement that for each $\ell\in \{1, \ldots, L\}$ and each $i\in \{1, \ldots, I_\ell\}$, we have 
\begin{equation}
	\label{e:entropypiecewise}
	\big( \dot{x}_{i,\ell}(t) + B[\nu,M](x_{i,\ell}(t), y_\ell, t) \big) [[\eta_\alpha(M(\cdot, y_\ell, t))]] \ge [[q_\alpha(M(\cdot, y_\ell, t), y_\ell)]],  
\end{equation}
for any time $t$ at which $x_{i,\ell}$ is differentiable, and any $\alpha\in \mathbb{R}$.  Here $[[f]]$ denotes the jump in $f$ across $x_{i,\ell}$, i.e., $[[f]] = f(x_{i,\ell}(t)+) - f(x_{i,\ell}(t)-)$.

Denote $m_\pm = M(x_{i,\ell}(t)\pm, y_\ell, t)$.  If we take $\alpha \le m_-$ then  \eqref{e:entropypiecewise} takes the form
\begin{equation}
	\label{e:entropypiecewise1}
	\big( \dot{x}_{i,\ell}(t) + B[\nu,M](x_{i,\ell}(t), y_\ell, t) \big) [[M(\cdot, y_\ell, t))]] \ge [[A(M(\cdot, y_\ell, t), y_\ell)]],  
\end{equation}
whereas taking $\alpha\ge m_+$ gives the opposite inequality.  Combining both cases gives us 
\begin{equation}
	\label{e:RH}
	\dot{x}_{i,\ell}(t) + B[\nu,M](x_{i,\ell}(t), y_\ell, t) = \frac{[[A(M(\cdot, y_\ell, t), y_\ell)]]}{[[M(\cdot, y_\ell, t)]]}.  
\end{equation}
The final case, namely $m_-<\alpha<m_+$, yields \begin{equation}
	\label{e:entropypiecewise2}
	\big( \dot{x}_{i,\ell}(t) + B[\nu,M](x_{i,\ell}(t), y_\ell, t) \big) \big( [[M(\cdot, y_\ell, t))]] - 2(\alpha - m_-)\big) \ge [[A(M(\cdot, y_\ell, t), y_\ell)]] - 2(A(\alpha, y_\ell) - A(m_-, y_\ell)).  
\end{equation} 
Substituting \eqref{e:RH} into \eqref{e:entropypiecewise2} and rearranging gives
\begin{equation}
	\label{e:Oleinik}
	\frac{[[A(M(\cdot, y_\ell, t), y_\ell)]]}{[[M(\cdot, y_\ell, t)]]} \le \frac{A(\alpha, y_\ell) - A(m_-, y_\ell)}{\alpha - m_-},
	\qquad m_-<\alpha<m_+.
\end{equation}
We will refer to \eqref{e:RH} as the `Rankine--Hugoniot' condition and \eqref{e:Oleinik} as the `Oleinik entropy condition' for these kinds of solutions.  Together, \eqref{e:RH} and \eqref{e:Oleinik} are equivalent to \eqref{e:entropydistributional} for piecewise constant solutions of \eqref{e:ScalarBalance}.

The piecewise constant solutions that we have in mind are the ones where the trajectories $x_{i,\ell}$ satisfy the \textit{sticky particle Cucker--Smale dynamics}, which we introduce presently (in the unidirectional case only).

\subsubsection{Sticky particle Cucker--Smale (SPCS) dynamics}

In this section, we work with a collection of agents with positions $(x_{i,\ell}(t), y_\ell)\in \mathbb{R}\times \mathbb{R}^{d-1}$, horizontal velocities $\dot{x}_{i,\ell}(t) = v_{i,\ell}(t)\in \mathbb{R}$, and masses $\mu_\ell m_{i,\ell}$, where $\ell\in \{1, \ldots, L\}$ indexes the `horizontal slice' on which each agent lives and $i\in \{1, \ldots, I_\ell\}$ indexes its position relative to other agents on the same slice.  We assume that 
\[
x_{1,\ell}(t)\le x_{2,\ell}(t) \le \cdots \le x_{I_\ell, \ell}(t),
\qquad t\ge 0,\, \ell\in \{1, \ldots, L\}.
\]
We also make the normalizing assumption that $\sum_{\ell=1}^L \mu_\ell = 1$, and $\sum_{i=1}^{I_\ell} m_{i,\ell} = 1$ for each $\ell$.  At the level of the scalar balance law, these choices correspond to working with the transverse measure $\nu = \sum_{\ell=1}^L \mu_\ell \delta(\cdot - y_\ell)$ and piecewise constant function $M(x,y_\ell, t) = \sum_{i=1}^{I_\ell} m_{i,\ell} H(x - x_{i,\ell}(t))$ in the previous subsection.  Note also that, under this notation, we have 
\begin{equation}
	\label{e:BnuMdiscrete}
	B[\nu,M](x_{i,\ell}(t), y_\ell, t) = \sum_{p=1}^L \sum_{j=1}^{I_p} \mu_p m_{j,p} \Phi(x_{i,\ell}(t) - x_{j,p}(t), y_\ell - y_p).
\end{equation}
We also make the convention that velocities are right-continuous: 
\[
v_{i,\ell}(t+) = v_{i,\ell}(t),
\qquad \ell\in \{1, \ldots, L\}, \, i\in \{1, \ldots, I_\ell\}.
\]
For each $\ell\in \{1, \ldots, L\}$ and each $i\in \{1, \ldots, I_\ell\}$, we denote 
\[
\mathcal{J}_{i,\ell}(t) = \{j: x_{j,\ell}(t) = x_{i,\ell}(t)\},
\qquad 
i^*(t,\ell) = \max J_{i,\ell}(t),
\qquad 
i_*(t,\ell) = \min J_{i,\ell}(t),
\]
and we say that time $t$ is a \textit{collision time} if the cardinality of some $\cJ_{i,\ell}$ increases at time $t$.  Otherwise it is a \textit{noncollisional time}.  A time interval is \textit{noncollisional} if it contains no collision times.

Finally, when we specify initial conditions $(x^0_{i,\ell},v^0_{i,\ell})$, we set $x_{i,\ell}(0) = x^0_{i,\ell}$ and $v_{i,\ell}(0-) = v_{i,\ell}^0$.  The quantity $v_{i,\ell}(0)$ will be determined from the rules below.  

With the above notation and conventions, we are ready to specify the characteristics of the sticky particle Cucker--Smale dynamics, which are determined from the following three rules.

\begin{enumerate}[label = (\alph*)]
	\item (Stickiness of collisions) For any time $t\ge 0$ (but in particular at collision times) we have 
	\begin{equation} 
		\cJ_{i,\ell}(s)\subseteq \cJ_{i,\ell}(t),
		\qquad \text{ whenever } 0\le s \le t.
	\end{equation} 
	\item (Conservation of momentum across collisions) We have 
	\begin{equation}
		\label{e:collisioncons}
		\sum_{j\in \cJ_{i,\ell}(t)} m_{j,\ell} v_{j,\ell}(t) = \sum_{j\in \cJ_{i,\ell}(t)} m_{j,\ell} v_{j,\ell}(t-),
		\qquad t\ge 0.
	\end{equation}
	\item (Cucker--Smale dynamics at noncollisional times) On any noncollisional time interval, $(x_{i,\ell},v_{i,\ell})$ evolve according to the Cucker--Smale ODEs: 
	\begin{equation}
		\label{e:CSuni}
		\left\{ 
		\begin{split}
			\dot{x}_{i,\ell} & = v_{i,\ell} \\
			\dot{v}_{i,\ell} & = \sum_{p = 1}^L \sum_{j=1}^{I_p} \mu_p m_{j,p} \phi(x_{i,\ell} - x_{j,p}, y_\ell - y_p)(v_{j,p} - v_{i,\ell}) 
		\end{split}
		\right.     
	\end{equation}
\end{enumerate}
Note that rule (a) guarantees that only finitely many collisions may occur, so the dynamics follow \eqref{e:CSuni} except for at these finitely many collision times.  We will also use the following relationship, which results from combining (a) and (b). 
\begin{equation}
	\label{e:collisioncons2}
	v_{i,\ell}(t) = \frac{\sum_{j\in \cJ_{i,\ell}(t)} m_{j,\ell} v_{j,\ell}(t)}{\sum_{j\in \cJ_{i,\ell}(t)} m_{j,\ell}} = \frac{\sum_{j\in \cJ_{i,\ell}(t)} m_{j,\ell} v_{j,\ell}(t-)}{\sum_{j\in \cJ_{i,\ell}(t)} m_{j,\ell}},
	\qquad t\ge 0.
\end{equation}

\subsubsection{Basic properties of SPCS}

Let us briefly review a few properties of the SPCS dynamics.  We state these facts without proof; the details are all analogous to the arguments in \cite{LeslieTan2023}.

\begin{PROP}[Maximum Principle]
	With the notation above, if the unidirectional sticky particle Cucker--Smale system is equipped with the masses $\mu_\ell m_{i,\ell}$ and initial conditions $(x_{i,\ell}^0,y_\ell, v_{i,\ell}^0)$, $1\le \ell \le L$, $1\le i \le I_\ell$, then 
	\begin{equation}
		\min_{j,p} v_{j,p}(s) \le v_{i,\ell}(t)\le \max_{j,p} v_{j,p}(s),
		\qquad 0\le s \le t,
	\end{equation}
	and 
	\begin{equation}
		|x_{i,\ell}(t)| \le \max_{j,p} |x_{j,p}^0| + t \max_{j', p'} |v_{j', p'}^0|,
		\qquad t\ge 0.
	\end{equation}
\end{PROP}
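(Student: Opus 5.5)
The plan is to argue separately on each noncollisional interval and then check that the collision rules never spoil the bounds. Since rule (a) allows only finitely many collisions, there are finitely many collision times $0\le t_1<t_2<\cdots<t_K$. It therefore suffices to show two things. First, on each open interval between consecutive collision times, the quantities $\max_{j,p} v_{j,p}(t)$ and $\min_{j,p} v_{j,p}(t)$ are nonincreasing and nondecreasing, respectively. Second, at each collision time (including possibly $t=0$, where $v(0)$ is determined from $v(0-)$) the maximum does not increase and the minimum does not decrease. Chaining these monotonicity statements across the finitely many intervals gives the first inequality for all $0\le s\le t$.

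For the noncollisional intervals, the function $V(t)=\max_{j,p} v_{j,p}(t)$ is a maximum of finitely many $C^1$ functions, hence Lipschitz. At almost every $t$ its derivative equals $\dot v_{i,\ell}(t)$ for some index $(i,\ell)$ attaining the maximum. By \eqref{e:CSuni},
\[
\dot v_{i,\ell}=\sum_{p,j}\mu_p m_{j,p}\,\phi(x_{i,\ell}-x_{j,p},y_\ell-y_p)\,(v_{j,p}-v_{i,\ell})\le 0,
\]
since $\phi\ge0$ and $v_{j,p}\le v_{i,\ell}$. So $V$ is nonincreasing, and the symmetric argument shows the minimum is nondecreasing. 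One technical point needs care. Agents that are stuck together have equal velocities and identical positions, so they obey the same ODE and remain a single effective particle. The ODE system on the interval is therefore well posed, and $\phi$ bounded guarantees a global solution on that interval.

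At a collision time, \eqref{e:collisioncons2} says each post-collision velocity $v_{i,\ell}(t)$ is a mass-weighted convex combination of the pre-collision velocities $v_{j,\ell}(t-)$ with $j\in\cJ_{i,\ell}(t)$. It therefore lies between $\min_{j,p} v_{j,p}(t-)$ and $\max_{j,p} v_{j,p}(t-)$. Combined with continuity of the velocities from the left within each interval, this shows the extremes cannot jump outward. Because velocities are right-continuous and left limits exist, the interval monotonicity extends to the closed intervals.

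For the position bound, each $x_{i,\ell}$ is continuous and piecewise $C^1$ with $\dot x_{i,\ell}=v_{i,\ell}$. Then
\[
|x_{i,\ell}(t)|\le |x^0_{i,\ell}|+\int_0^t|v_{i,\ell}(\tau)|\,\dtau .
\]
Applying the first part with $s=0$, and noting that the bounds at $s=0$ are themselves controlled by the $v^0=v(0-)$ values through the collision step at time $0$, we get $|v_{i,\ell}(\tau)|\le\max_{j',p'}|v^0_{j',p'}|$, which yields the claim.

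The only real obstacle is bookkeeping rather than analysis. One must confirm that the dynamics are well defined with finitely many collisions and handle the $t=0$ convention, so that $v(0)$ is compared with $v^0=v(0-)$. Beyond that, the argument is the same maximum principle as in \cite{LeslieTan2023}, with the double index $(i,\ell)$ in place of a single index.
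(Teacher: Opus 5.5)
Your argument is correct and is the standard one the paper defers to from \cite{LeslieTan2023}: the extreme velocities are monotone on noncollisional intervals by the Cucker--Smale ODE, and they cannot move outward at the finitely many collision times because of the convex-combination rule \eqref{e:collisioncons2}; the position bound then follows by integration. One small correction: under \eqref{e:phiLip2} $\phi$ need not be continuous in $x$, so on noncollisional intervals the velocities are only Lipschitz rather than $C^1$ (your a.e.\ argument goes through unchanged), and your well-posedness remark is unnecessary, since the proposition takes the SPCS trajectories as given.
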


The remaining properties concern the following quantity.
\begin{equation}
	\psi_{i,\ell}(t) = v_{i,\ell}(t) + \sum_{p=1}^L \sum_{j=1}^{I_p} \mu_p m_{j,p} \Phi(x_{i,\ell}(t) - x_{j,p}(t), y_\ell - y_p),
	\qquad 1\le i\le I_\ell, \, 1\le \ell \le L, \, t\ge 0.
\end{equation}
We also define 
\[
\psi_{i,\ell}^0 = v_{i,\ell}^0 + \sum_{p=1}^L \sum_{j=1}^{I_p} \mu_p m_{j,p} \Phi(x_{i,\ell}^0 - x_{j,p}^0, y_\ell - y_p).
\]

\begin{PROP}[Properties of $\psi$] The quantities $\psi_{i,\ell}$ have the following conservation property:
	\begin{equation}
		\label{e:collisionconspsi}
		\psi_{i,\ell}(t) = \frac{\sum_{j\in \cJ_{i,\ell}(t)} m_{j,\ell} \psi_{j,\ell}(t)}{\sum_{j\in \cJ_{i,\ell}(t)} m_{j,\ell}} = \frac{\sum_{j\in \cJ_{i,\ell}(t)} m_{j,\ell} \psi_{j,\ell}(s)}{\sum_{j\in \cJ_{i,\ell}(t)} m_{j,\ell}},
		\qquad 0\le s\le t.
	\end{equation}
	Furthermore, we have the following ordering property:
	\begin{equation}
		\psi_{i_*(s,\ell),\ell}(s-)\ge \psi_{i_*(s,\ell)+1,\ell}(s-) \ge \cdots \ge \psi_{i^*(s,\ell),\ell}(s-),
		\qquad s\ge 0.
	\end{equation}
	Consequently, for $k\in \cJ_{i,\ell}(t)$, the functions
	\[
	s\mapsto \frac{\sum_{j=i_*(t,\ell)}^k m_{j,\ell} \psi_{j,\ell}(s)}{\sum_{j=i_*(t,\ell)}^k m_{j,\ell}},
	\qquad 
	s\mapsto \frac{\sum_{j=k}^{i^*(t,\ell)} m_{j,\ell} \psi_{j,\ell}(s)}{\sum_{j=k}^{i_*(t,\ell)} m_{j,\ell}},
	\]
	are nonincreasing and nondecreasing, respectively, on the interval $s\in [0,t]$.  Therefore, we have 
	\begin{equation}
		\label{e:psibary}
		\frac{\sum_{j=i_*(t,\ell)}^k m_{j,\ell} \psi_{j,\ell}(s)}{\sum_{j=i_*(t,\ell)}^k m_{j,\ell}} 
		\ge \psi_{i,\ell}(t) 
		\ge \frac{\sum_{j=k}^{i^*(t,\ell)} m_{j,\ell} \psi_{j,\ell}(s)}{\sum_{j=k}^{i_*(t,\ell)} m_{j,\ell}},
		\qquad k\in \cJ_{i,\ell}(t),\; 0\le s \le t.
	\end{equation}
\end{PROP}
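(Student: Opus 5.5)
The plan is to work on noncollisional intervals and at collision times separately, exactly as in the 1D theory of \cite{LeslieTan2023}, exploiting the fact that the slice index $\ell$ is frozen along the dynamics.

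\textbf{Step 1: $\psi$ is constant between collisions.} On a noncollisional interval, I differentiate $\psi_{i,\ell}$ using \eqref{e:CSuni} and $\partial_x\Phi(x,y)=\phi(x,y)$:
\[
\dot\psi_{i,\ell}=\dot v_{i,\ell}+\sum_{p,j}\mu_p m_{j,p}\,\phi(x_{i,\ell}-x_{j,p},y_\ell-y_p)\,(v_{i,\ell}-v_{j,p}) = 0 .
\]
So $\psi_{i,\ell}$ is constant there. Clustered agents share positions and hence share the $\Phi$-sum. Consequently $\psi_{j,\ell}(t)-\psi_{i,\ell}(t)=v_{j,\ell}(t)-v_{i,\ell}(t)$ whenever $j\in\cJ_{i,\ell}(t)$.

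\textbf{Step 2: collision times and the conservation property \eqref{e:collisionconspsi}.} At a collision time $t$, the positions are continuous, so the $\Phi$-term is continuous in $t$. Then \eqref{e:collisioncons2} for $v$ transfers verbatim to $\psi$. This gives
\[
\psi_{i,\ell}(t)=\frac{\sum_{j\in\cJ_{i,\ell}(t)} m_{j,\ell}\psi_{j,\ell}(t-)}{\sum_{j\in\cJ_{i,\ell}(t)} m_{j,\ell}} .
\]
To get the statement with arbitrary $s\le t$, I induct over the finitely many collision times in $[s,t]$. Stickiness (rule (a)) means the clusters at earlier times partition $\cJ_{i,\ell}(t)$. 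Averages of averages are therefore mass-weighted averages over $\cJ_{i,\ell}(t)$ of the values $\psi_{j,\ell}(s)$. Step 1 handles the intervals between collisions.

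\textbf{Step 3: ordering at a collision.} Just before $s$, agents $i_*(s,\ell),\dots,i^*(s,\ell)$ satisfy $x_{i_*}(s-)<\dots$ in clusters and converge to the same point at time $s$. Here is the main subtlety. Each cluster just before $s$ already has a common $\psi$, and consecutive clusters with $x_a<x_b$ meeting at $s$ must satisfy $v_a(s-)\ge v_b(s-)$. If needed I take left limits, using piecewise $C^1$ trajectories and a mean value argument: if $v_a<v_b$ on a left neighborhood, the gap would not close. Since the two clusters are at the same point at time $s$, their $\Phi$-sums agree in the limit $s-$. Hence $\psi_a(s-)-\psi_b(s-)=v_a(s-)-v_b(s-)\ge0$, which gives the ordering.

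\textbf{Step 4: monotonicity of partial averages.} For $k\in\cJ_{i,\ell}(t)$, I consider the partial average over $j=i_*(t,\ell),\dots,k$ as a function of $s$. It is constant on noncollisional intervals by Step 1. At a collision time $s'$, the indices $\{i_*(t,\ell),\dots,k\}$ form a union of whole merging clusters, plus possibly an initial segment of one cluster containing $k$. Whole clusters preserve their weighted sums. For the partial cluster, merging replaces values sorted in decreasing order (Step 3) by their full average. The average of an initial segment of a decreasing sequence dominates the full average, so this weakly decreases the segment sum. Hence the partial average is nonincreasing. The symmetric argument gives that the right-hand averages are nondecreasing.

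At $s=t$ the set $\{i_*,\dots,k\}$ lies in a single cluster, so both averages equal $\psi_{i,\ell}(t)$. Evaluating the monotone functions at $s$ and at $t$ then yields \eqref{e:psibary}. The expected obstacle is the careful justification of Step 3: the left-limit behavior at simultaneous multi-cluster collisions.
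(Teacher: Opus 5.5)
Your proposal is correct and follows essentially the standard route that the paper defers to \cite{LeslieTan2023}: $\psi$ is constant between collisions, it is mass-averaged at collisions (via continuity of the $\Phi$-sum), the pre-collisional ordering comes from the closing-gap argument, and the Brenier--Grenier-type barycentric monotonicity gives \eqref{e:psibary}.

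The one caveat is that your Step 3 silently requires $s>0$. At $s=0$, rules (a)--(c) do not constrain the values $\psi_{j,\ell}(0-)=\psi^0_{j,\ell}$ of agents sharing an initial position: two coincident agents with $v^0_{1,\ell}<v^0_{2,\ell}$ are stuck together from time $0$ on and violate the stated ordering. So the case $s=0$ must be imposed as a labeling convention on the initial data. Your Step 4 does not need it, since it only uses collision times in $(0,t]$. However, it is exactly what lets one replace $\psi_{j,\ell}(0)$ by $\psi^0_{j,\ell}$ in \eqref{e:psibary}, as the proof of Theorem \ref{t:SPCSentropy} does.
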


\subsubsection{Entropy solutions generated by SPCS dynamics}

Equipped with the tools developed above, we can now complete our verification that unidirectional SPCS dynamics generate entropy solutions of \eqref{e:ScalarBalance} and make the relationship completely precise. 
\begin{THEOREM}
	\label{t:SPCSentropy}
	Assume $\phi$ satisfies \eqref{e:phieven} and \eqref{e:phiL1locx}.  Fix a collection of initial positions $(x^0_{i,\ell}, y_\ell)\in \mathbb{R}\times \mathbb{R}^{d-1}$, initial horizontal velocities $v^0_{i,\ell}$, and masses $\mu_\ell m_{i,\ell}$, with $\sum_{\ell = 1}^L \mu_\ell = 1$, $\sum_{i=1}^{I_\ell} m_{i,\ell} = 1$, $i\in \{1, \ldots, I_\ell\}$, $\ell\in \{1, \ldots, L\}$.  Assume $(x_{i,\ell}(t), v_{i,\ell}(t))$ satisfy the sticky particle Cucker--Smale dynamics associated to this initial data.  Define $\psi_{i,\ell}(t)$ and $\psi_{i,\ell}^0$ as in the previous subsection and define $\theta_{i,\ell} = \sum_{j=1}^i m_{j,\ell}$, $\theta_{0,\ell} = 0$. Then 
	\begin{equation}
		\label{e:MSPCS}
		M(x,y_\ell,t) = \sum_{i=1}^{I_\ell} m_{i,\ell} H(x - x_{i,\ell}(t))
	\end{equation}
	defines the unique entropy solution of \eqref{e:ScalarBalance} associated to the admissible triple $(M^0, A, \nu)$, where 
	\begin{equation} 
		\label{e:nuSPCS}
		\nu = \sum_{\ell=1}^L \mu_\ell \delta(\cdot - y_\ell),
	\end{equation} 
	\begin{equation}
		\label{e:M0SPCS}
		M^0(x,y_\ell) = \sum_{i=1}^{I_\ell} m_{i,\ell} H(x - x^0_{i,\ell}),    
	\end{equation}
	and $A(\cdot, y_\ell)$ is the piecewise linear function determined by
	\begin{equation}
		\label{e:ASPCS}
		A(\theta_{i,\ell},y_\ell) = \sum_{j=1}^i m_{j,\ell} \psi^0_{j,\ell},
		\qquad \qquad
		\partial_m A(\theta, y_\ell) = \psi^0_{i,\ell} \quad \text{ for } \quad \theta\in (\theta_{i-1, \ell}, \theta_{i,\ell}).    
	\end{equation}
\end{THEOREM}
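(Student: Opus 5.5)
Uniqueness is immediate from Theorem \ref{th:uniqueness-main}: taking $\widetilde{\nu}=\nu$, $\widetilde A = A$, $\widetilde M^0 = M^0$ and $\bpi$ the diagonal coupling, \eqref{eq:stab-pi} gives $\cD(t)\le \cD(0)=0$. The remark after \eqref{eq:stab-pi} says this case needs only \eqref{e:phiL1locx}. So the task is to show that \eqref{e:MSPCS} is an entropy solution for the triple \eqref{e:nuSPCS}--\eqref{e:ASPCS}.

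\textbf{Admissibility and basic properties.} The triple is admissible: $A(0,y_\ell)=0$ and $[A(\cdot,y_\ell)]_{\Lip}\le \max|\psi^0_{i,\ell}|<\infty$. For the solution itself, the Maximum Principle gives the uniform support bound in part (a) of Definition \ref{def:entropysoln}, and monotonicity is built in. Time continuity in $L^1$ and the initial condition \eqref{e:MIC} follow because each $x_{i,\ell}$ is Lipschitz in $t$, so $\|M(\cdot,y_\ell,t)-M(\cdot,y_\ell,s)\|_{L^1}\le \max|v^0|\,|t-s|$. The trajectories are piecewise $C^1$, with finitely many collision times.

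\textbf{Reduction to jump conditions.} By the reduction preceding Theorem \ref{t:SPCSentropy}, it suffices to verify, at each noncollisional time and each jump location $x_{i,\ell}(t)$, the Rankine--Hugoniot condition \eqref{e:RH} and the Oleinik condition \eqref{e:Oleinik}. Collision times form a finite set and do not affect the distributional inequality, since $M$ is continuous in $L^1$ in time. Fix a cluster $\cJ=\cJ_{i,\ell}(t)$ with indices $i_*,\dots,i^*$. Across it, $m_-=\theta_{i_*-1,\ell}$ and $m_+=\theta_{i^*,\ell}$. Since $A$ is piecewise linear with slopes $\psi^0_{j,\ell}$,
\[
\frac{[[A]]}{[[M]]} = \frac{\sum_{j\in\cJ} m_{j,\ell}\psi^0_{j,\ell}}{\sum_{j\in\cJ}m_{j,\ell}} .
\]
By the conservation property \eqref{e:collisionconspsi} with $s=0$, this equals $\psi_{i,\ell}(t)$. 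All members of the cluster share the position $x_{i,\ell}(t)$ and the velocity $v_{i,\ell}(t)=\dot x_{i,\ell}(t)$ by \eqref{e:collisioncons2}. Hence $\psi_{i,\ell}(t)=\dot x_{i,\ell}(t)+\sum \mu_p m_{j,p}\Phi(\cdots)$, which by \eqref{e:BnuMdiscrete} is $\dot x_{i,\ell}+B[\nu,M](x_{i,\ell},y_\ell,t)$.

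One subtlety needs checking here. $\Phi(0,0)=0$ and $\Phi$ is continuous in its first argument, so the self-interaction and same-location terms vanish. This means the value of $B$ at the jump is unambiguous, and the $B\,\partial_x|M-\alpha|$ term, a product with a measure, is well defined because $B$ is continuous in $x$ under \eqref{e:phiL1locx}. This gives \eqref{e:RH}.

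\textbf{Oleinik condition (main step).} For $m_-<\alpha<m_+$, the chord slope $(A(\alpha,y_\ell)-A(m_-,y_\ell))/(\alpha-m_-)$ is a mass-weighted average of $\psi^0$ over the partial range. Since $A$ is piecewise linear, the right-hand side of \eqref{e:Oleinik} is minimized over $\alpha$ at the breakpoints $\alpha=\theta_{k,\ell}$, $k\in\cJ$. It is therefore enough to show
\[
\frac{\sum_{j=i_*}^k m_{j,\ell}\psi^0_{j,\ell}}{\sum_{j=i_*}^k m_{j,\ell}}\ \ge\ \psi_{i,\ell}(t).
\]
Between breakpoints the chord slope is a weighted average of its values at the neighbouring breakpoints, so it lies above the minimum over breakpoints. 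The displayed inequality is exactly the left inequality in \eqref{e:psibary} with $s=0$.

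This is where the real content lies: the ordering and monotonicity properties of $\psi$ inside a cluster, stated without proof and relying on the sticky dynamics. I would invoke them as given. If a check is wanted, the proof is by induction over collision times. Before a collision, the left portion of a merging cluster has larger $\psi$, because it is moving faster toward the right portion while its nonlocal $\Phi$ terms coincide. Merging then only averages $\psi$ values, which preserves the prefix-average inequality.
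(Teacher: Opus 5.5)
Your proposal is correct and follows essentially the same route as the paper. You reduce to \eqref{e:RH} and \eqref{e:Oleinik} on each cluster, obtain \eqref{e:RH} from \eqref{e:collisionconspsi} and \eqref{e:BnuMdiscrete}, and obtain \eqref{e:Oleinik} by checking the chord slopes of the piecewise linear $A$ at its breakpoints against \eqref{e:psibary}.

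Your additional points only make explicit what the paper leaves implicit: uniqueness from \eqref{eq:stab-pi} with the diagonal coupling, $L^1$ time continuity, continuity of $B$ in $x$, and keeping the breakpoint $\theta_{i_*,\ell}$, which the paper's index set $\cJ_{i,\ell}(t)\setminus\{i_*(t,\ell),i^*(t,\ell)\}$ drops. Like the paper, you tacitly identify $\psi_{j,\ell}(0)$ with $\psi^0_{j,\ell}$, which is harmless when the initial positions on each slice are distinct.
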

\begin{proof}
	It suffices to check \eqref{e:RH} and \eqref{e:Oleinik}, with 
	\[
	m_- = M(x_{i,\ell}(t)-, y_\ell,t) = \theta_{i_*(t,\ell)-1,\ell},
	\qquad 
	m_+ = M(x_{i,\ell}(t)+, y_\ell,t) = \theta_{i^*(t,\ell),\ell}
	\]    
	For the first verification, we note that
	\begin{align*} 
		[[A(M(\cdot, y_\ell, t), y_\ell)]] & = A(\theta_{i^*(t,\ell),\ell}, y_\ell) - A(\theta_{i_*(t,\ell)-1,\ell}, y_\ell) 
		= \sum_{j\in \cJ_{i,\ell}(t)} m_{j,\ell} \psi^0_{j,\ell} \\
		& = \bigg( \sum_{j\in \cJ_{i,\ell}(t)} m_{j,\ell}\bigg) \psi_{i,\ell}(t) \\
		& = [[M(\cdot, y_\ell, t)]]\cdot \big( \dot{x}_{i,\ell}(t) + B[\nu,M](x_{i,\ell}(t), y_\ell, t) \big).
	\end{align*} 
	In passing to the second line, we have used \eqref{e:collisionconspsi}; in passing to the third, we used \eqref{e:BnuMdiscrete}.  To check the Oleinik condition, we argue similarly, using \eqref{e:psibary}. 
	Note that piecewise linearity of $A(\cdot, y_\ell)$ means that we only need to check \eqref{e:Oleinik} at the breakpoints $\theta_{k,\ell}$, with $k\in \cJ_{i,\ell}(t)\backslash \{i_*(t,\ell), i^*(t,\ell)\}$.  To this end, we have 
	\[
	\frac{A(\theta_{k,\ell}, y_\ell)- A(\theta_{i_*(t,\ell)-1,\ell}, y_\ell)}{\theta_{k,\ell}- \theta_{i_*(t,\ell)-1,\ell}}
	= \frac{\sum_{j=i_*(t,\ell)}^k m_{j,\ell} \psi^0_{j,\ell}}{\sum_{j=i_*(t,\ell)}^k m_{j,\ell}} 
	\ge \psi_{i,\ell}(t) = \dot{x}_{i,\ell}(t) + B[\nu,M](x_{i,\ell}(t), y_\ell, t). 
	\]
\end{proof}
With Theorem \ref{t:SPCSentropy} in hand, we set the following terminology: 
\begin{DEF}
	\label{def:SPCSentropysoln}
	An \textit{SPCS entropy solution} of \eqref{e:ScalarBalance} is an entropy solution $M$ associated to an admissible triple $(M^0, A, \nu)$, such that $M$ takes the form \eqref{e:MSPCS}, $\nu$ takes the form \eqref{e:nuSPCS}, $M^0$ takes the form \eqref{e:M0SPCS}, and $A$ takes the form \eqref{e:ASPCS}.    
\end{DEF}

\subsubsection{Entropy solutions supported on finitely many slices}

Finally, we may state the main result of this subsection, which establishes existence of entropy solutions when the associated transverse measure $\nu$ is atomic.  It also clarifies the sense in which such a solution can be approximated by SPCS dynamics.  

\begin{THEOREM}
	\label{t:sliceexist}
	Fix $T>0$.  Let $(M^0, A, \nu)$ be an admissible triple such that that $\nu$ is atomic: $\nu = \sum_{\ell=1}^L \mu_\ell \delta(\cdot - y_\ell)$.  Then there exists a unique entropy solution $M$ on $[0,T)$ associated to this triple, and $M(\cdot, y_\ell, \cdot)\in BV(\mathbb{R}\times (0,T))$ for each $\ell\in \{1, \ldots, L\}$. The number $R(T)$ in the definition of entropy solution can be taken as 
	\begin{equation}
		R(T) = R^0 + T \big(\max_\ell [A(\cdot, y_\ell)]_{\Lip} + \max_\ell \Phi(2R^0, y_\ell)\big),
	\end{equation} where $R^0>0$ is chosen large enough so that $\{x:M^0(x, y_\ell)\notin \{0,1\}\}\subset [-R^0,R^0]$ for each $\ell\in \{1, \ldots, L\}$.  Moreover, there exists a sequence $(M_N)_{N=1}^\infty$ of SPCS entropy solutions (with associated admissible triples $(M^0_N, A_N, \nu)$) such that 
	\begin{equation}
		\label{e:MNtoMdisc}
		(M_N - M)(\cdot, y_\ell, \cdot)\to 0 \quad \text{ in } \; C([0,T);L^1(\mathbb{R})),\qquad \text{ for all } \ell\in \{1, \ldots, L\}.
	\end{equation}
	and 
	\begin{equation}
		\label{e:ptMNtoptMdisc}
		\partial_t M_N(\cdot, y_\ell, t) \stackrel{*}{\rightharpoonup} \partial_t M(\cdot, y_\ell, t) \qquad \text{ in } \cM(\mathbb{R}),
		\qquad \text{ for all } \ell\in \{1, \ldots, L\},\;t\in (0,T).
	\end{equation}
	Finally, if $\partial_m A(\cdot, y_\ell)$ is H\"older continuous of order $\epsilon\in (0,1]$, for each $\ell$, then $(M_N^0, A_N)$ can be chosen so that \eqref{e:MNtoMdisc} holds with a rate:
	\begin{equation}
		\label{e:MNtoMdiscrate}
		\sup_{t\in [0,T)} \sum_{\ell=1}^{L} \mu_\ell \|M_N(\cdot, y_\ell, t) - M(\cdot, y_\ell, t)\|_{L^1(\mathbb{R})} \le C (1+T) N^{-\epsilon},
	\end{equation}
	
\end{THEOREM}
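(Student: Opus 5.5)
Uniqueness follows at once from Theorem \ref{th:uniqueness-main}. Taking $\nu=\widetilde\nu$, the same flux and the same data, with $\bpi$ the diagonal coupling, the estimate \eqref{eq:stab-pi} with $s=0$ gives $\cD(t)\le 0$. Only \eqref{e:phiL1locx} is needed for this case, as remarked after the theorem. For existence, the plan is to follow \cite{LeslieTan2023}: approximate by SPCS entropy solutions (Theorem \ref{t:SPCSentropy}) and pass to the limit using compactness plus the stability estimate.

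\textbf{Step 1 (discretization).} On each slice $y_\ell$, take the pseudo-inverse $(M^0)^{-1}(\cdot,y_\ell)$. Split $[0,1]$ into $N$ pieces of mass $1/N$; if $M^0$ has atoms, refine so that jumps of $M^0$ occur at breakpoints. Place particles at the quantiles, so that $\|M^0_N-M^0\|_{L^1}\le 2R^0/N$ by \eqref{e:W1id}. Choose $\psi^0_{i,\ell}=N\big(A(\theta_{i,\ell},y_\ell)-A(\theta_{i-1,\ell},y_\ell)\big)$, so that $A_N$ is the piecewise linear interpolant of $A$. Then set $v^0_{i,\ell}=\psi^0_{i,\ell}-\sum\mu_p m_{j,p}\Phi(\cdot)$.

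By Theorem \ref{t:SPCSentropy}, the SPCS dynamics give entropy solutions $M_N$. The maximum principle gives $|v|\le \max[A]_{\Lip}+\max_\ell\Phi(2R^0,y_\ell)$, since $\psi$ is a weighted average of $\psi^0$ and $\Phi$ is monotone. This yields the stated $R(T)$.

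\textbf{Step 2 (compactness).} Each $M_N(\cdot,y_\ell,\cdot)$ is monotone in $x$ with values in $[0,1]$ and is supported in $[-R(T),R(T)]$. Moreover, $\partial_t M_N=-v\,\partial_xM_N$ has total variation bounded by the velocity bound. This gives uniform BV bounds in $(x,t)$ and a uniform $L^1$ time-Lipschitz bound $\|M_N(t)-M_N(s)\|_{L^1}\le C|t-s|$. By Helly and Arzel\`a--Ascoli, a subsequence converges in $C([0,T);L^1)$ on each of the finitely many slices.

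\textbf{Step 3 (limit is entropic).} The flux $A_N(M_N,y_\ell)\to A(M,y_\ell)$ in $L^1$, because $A_N\to A$ uniformly and is uniformly Lipschitz. The nonlocal term $B[\nu,M_N]\,\partial_x|M_N-\alpha|$ requires more care. I would write $\partial_x|M_N-\alpha|=\partial_x\big((M_N-\alpha)^+ + (\alpha-M_N)^+\big)$ and integrate by parts onto $\zeta B$. This produces $\partial_xB[\nu,M_N]=\phi*(\nu\partial_1 M_N)$, which needs only \eqref{e:phiL1locx} to converge. Note that $B[\nu,M_N]\to B$ uniformly, since $\Phi$ is continuous in $x$ and $W_1$ convergence on each slice holds.

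This is the main obstacle. The term $\phi*(\nu\partial_1M_N)$, tested against $|M_N-\alpha|$, involves a product of a weakly converging measure and a discontinuous function. I would handle it by symmetrization: $\Phi$ is odd in $x$, and for $\phi$ merely in $L^1_{\loc}$ I would approximate $\phi$ by continuous kernels and estimate the error uniformly via \eqref{e:Mdhest}. The entropy inequality passes to the limit this way, and uniqueness then upgrades subsequential convergence to full convergence. The weak-$*$ convergence \eqref{e:ptMNtoptMdisc} follows from the $L^1$ convergence together with the uniform mass bound on $\partial_tM_N$.

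\textbf{Step 4 (rate).} If $\partial_mA$ is $\epsilon$-Hölder, then $[A_N-A]_{\Lip}\le CN^{-\epsilon}$, since the slope on each interval is an average of $\partial_mA$. Applying \eqref{eq:stab-pi} with $\nu=\widetilde\nu$ and the diagonal coupling gives
\[
\sum_\ell\mu_\ell\|M_N-M\|_{L^1}\le 2R^0/N+CTN^{-\epsilon},
\]
which is \eqref{e:MNtoMdiscrate}.
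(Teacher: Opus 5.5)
Your overall route (quantile discretization, SPCS entropy solutions from Theorem \ref{t:SPCSentropy}, Helly/Arzel\`a--Ascoli compactness, passage to the limit in the entropy inequality, and uniqueness and the rate from \eqref{eq:stab-pi} with the diagonal coupling) is the one the paper delegates to \cite{LeslieTan2023}. However, your discretization fails for a general admissible flux.

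\textbf{Atoms of $M^0$ with a general $A$.} Suppose $M^0(\cdot,y_\ell)$ has an atom whose mass interval $[m_-,m_+]$ is longer than $1/N$. Then you place several particles at the same initial point, and SPCS rule (a) with $s=0$ glues them together forever. The proof of Theorem \ref{t:SPCSentropy} rests on \eqref{e:psibary}, i.e.\ on the ordering property at collision times. At positive collision times this is automatic. At $s=0$ it is a condition on the data, and for your glued cluster it is exactly the Oleinik condition \eqref{e:Oleinik}: $A_N(\cdot,y_\ell)$ must lie on or above its chord over $[m_-,m_+]$.

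Making $m_\pm$ breakpoints secures this when $A(\cdot,y_\ell)$ itself lies above its chord there, e.g.\ when it is affine on atoms, as for fluxes built via \eqref{e:defA}. But Definition \ref{def:SBIC} allows any Lipschitz $A$. For example, take $L=1$, $\phi\equiv 0$, $M^0=H$, $A(m)=m^2/2$. All $N$ particles start at $0$ with $\psi^0_{i}=(2i-1)/(2N)$, they stick with velocity $1/2$, and $M_N=H(x-t/2)$ for every $N$. The entropy solution is instead the rarefaction $\min\{\max\{x/t,0\},1\}$. So $M_N$ is not entropic and your limit is wrong.

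The repair is cheap. Give the particles distinct initial positions, e.g.\ spread those at each heavy atom, in index order, over an interval of length $N^{-2}$. This costs $O(N^{-2})$ in $L^1$, leaves $A_N$ unchanged, and makes every collision occur at a positive time. Steps 2--4, including the rate, then go through.

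\textbf{Fixed-time convergence of $\partial_t M_N$.} The convergence \eqref{e:ptMNtoptMdisc} is asserted at each fixed $t$, and $L^1$ convergence plus a uniform mass bound cannot give it. For instance, $H(x-N^{-1}\sin(Nt))\to H(x)$ in $C([0,T);L^1)$, with time derivative $-\cos(Nt)\,\delta_{N^{-1}\sin(Nt)}$ of mass at most $1$, yet this derivative does not converge to $0$ at a generic fixed $t$. You have to use the equation at the fixed time, as the paper does at the end of the proof of Theorem \ref{thm:existence-main}. By \eqref{e:RH} and right-continuity of the velocities, $\partial_tM_N(\cdot,y_\ell,t)=-\partial_x\big(A_N(M_N(\cdot,y_\ell,t),y_\ell)\big)+B[\nu,M_N](\cdot,y_\ell,t)\,\partial_xM_N(\cdot,y_\ell,t)$ for every $t$. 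The right side converges weak-$*$ for each $t$, and this is also how $\partial_tM(\cdot,y_\ell,t)$ should be read.

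\textbf{The nonlocal term.} The term you flag as the main obstacle is not one. We have $\zeta B[\nu,M_N]\to\zeta B[\nu,M]$ uniformly. Also $\partial_x|M_N-\alpha|\stackrel{*}{\rightharpoonup}\partial_x|M-\alpha|$ at each $t$, by distributional convergence, total variation at most $1$, and common compact support. So no symmetrization or kernel approximation is needed.
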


The full proof of this theorem involves three steps: (1) Discretize the initial condition $M^0$ in $x$ and $A$ in $m$, then generate SPCS entropy solutions $M_N$ associated to the discretized initial data;  (2) Extract a limit $M$ as $N\to \infty$ that belongs to the required regularity class and satisfies the required convergences \eqref{e:MNtoMdisc} and \eqref{e:ptMNtoptMdisc}; and (3) Show that the limiting process preserves the Kruzkov entropy condition \eqref{e:entropydistributional}.  Now that we have set up the SPCS framework, all of these steps in the $L$-slice configuration under consideration follow exactly as in the 1D case.  Rather than duplicating the argument here, we refer the reader to Section 5.1 of \cite{LeslieTan2023} for more details.   In any case, the reader will see an argument of almost the same structure below when we prove the existence of a unique entropy solution generated from an arbitrary admissible triple.  We will construct these solutions via  approximation by a sequence of solutions which are discrete in $y$.  

Before moving on to the general case, we state one more Lemma (adapted from Proposition 6.6 of  \cite{LeslieTan2023}) that will be useful in Section \ref{ss:rate2} below.
\begin{LEMMA}
	\label{l:ICspecialrate}
	Let $\nu = \sum_{\ell = 1}^L \mu_\ell \delta(\cdot - y_\ell)$ be an atomic measure on $\mathbb{R}^{d-1}$, and assume that $M^0$ satisfies the requirements in Definition \ref{def:SBIC} for an admissible triple, with some associated $R^0$ such that $\{x:M^0(x,y_\ell)\notin \{0,1\}\} \subset [-R^0, R^0]$ for each $\ell\in \{1, \ldots, L\}$.  Let $X^0(\cdot, y_\ell)$ denote the generalized inverse of $M^0(\cdot, y_\ell)$ for each $\ell$:
	\[
	X^0(m,y_\ell) = (M^0(\cdot, y_\ell))^{-1}(m) = \inf\{x\in \mathbb{R} : M^0(x,y_\ell)\ge m\}.
	\]
	Then given $N\in \mathbb{N}$, there exists $(m_{i,\ell,N}, x^0_{i,\ell,N})_{i=1}^N$ such that for 
	\begin{equation}
		M^0_N(x,y_\ell) = \sum_{i=1}^N m_{i,\ell, N} H(x - x^0_{i,\ell,N}), 
		\qquad 
		X^0_N(\cdot, y_\ell) = (M^0_N(\cdot, y_\ell))^{-1},
	\end{equation}
	we have for each $\ell\in \{1, \ldots, L\}$ that 
	\begin{equation}
		\label{e:ICspecialrate}
		\|M^0(\cdot, y_\ell) - M_N^0(\cdot, y_\ell)\|_{L^1(\mathbb{R})} \le \frac{2R^0}{N},
		\qquad 
		\|X^0(\cdot, y_\ell) - X_N^0(\cdot, y_\ell)\|_{L^\infty(0,1]} \le \frac{2R^0}{N}. 
	\end{equation}
	
\end{LEMMA}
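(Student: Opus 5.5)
The plan is to work one slice at a time, since the statement decouples in $\ell$. For a fixed $\ell$, write $X^0 = X^0(\cdot,y_\ell)$. This is a nondecreasing, left-continuous function on $(0,1]$ with values in $[-R^0,R^0]$; the values can be taken in this interval because $M^0(x,y_\ell)=0$ for $x<-R^0$ and $=1$ for $x>R^0$. The idea is to discretize the \emph{range} of $X^0$ uniformly, rather than its domain. That choice is what gives a uniform $L^\infty$ bound on the inverse, with no regularity assumed on $M^0$.

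\textbf{Construction.} Partition $[-R^0,R^0]$ into $N$ intervals $J_i$ of length $2R^0/N$, for instance $J_i = (a_{i-1},a_i]$ with $a_i = -R^0 + 2R^0 i/N$, taking $J_1$ closed on the left. Set $m_{i,\ell,N} = \rho_\ell^0(J_i)$, where $\rho^0_\ell$ is the probability measure with distribution function $M^0(\cdot,y_\ell)$. Set $x^0_{i,\ell,N} = a_i$, the right endpoint of $J_i$. Some $m_{i,\ell,N}$ may vanish. That is harmless for the statement, and if strictly positive masses are needed later one can drop empty cells and relabel, which only reduces the number of atoms.

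\textbf{$L^\infty$ bound on inverses.} The key point is that $M^0_N(a_i) = M^0(a_i)$ at every grid point. Hence for each $m\in(0,1]$ both $X^0(m)$ and $X^0_N(m)$ lie in the same cell $\overline{J_i}$, namely the one with $M^0(a_{i-1})<m\le M^0(a_i)$. Indeed, $X^0_N(m) = a_i$ for that $i$, and $X^0(m)\in(a_{i-1},a_i]$ by the definition of the generalized inverse. Therefore $|X^0(m)-X^0_N(m)|\le 2R^0/N$.

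\textbf{$L^1$ bound on distribution functions.} This follows from the identity \eqref{e:W1id}:
\[
\|M^0-M^0_N\|_{L^1(\mathbb{R})} = \|X^0-X^0_N\|_{L^1([0,1])} \le \|X^0-X^0_N\|_{L^\infty(0,1]} \le \frac{2R^0}{N}.
\]
Alternatively, one can argue directly: $M^0$ and $M^0_N$ agree at the grid points, both are monotone, and they coincide outside $[-R^0,R^0]$. So on each $J_i$ the difference is at most $M^0(a_i)-M^0(a_{i-1})$, and summing over $i$ gives at most $2R^0/N$.

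\textbf{Main obstacle.} The only delicate point is the endpoint and atom conventions: left- versus right-continuity of $M^0$, the generalized inverse at $m$ equal to a value $M^0(a_i)$, and atoms of $\rho^0_\ell$ sitting exactly at grid points. I expect this to be handled by choosing half-open cells consistently with the right-continuity of distribution functions, as above. Since $M^0$ is only specified $\nu$-a.e., which here means on each slice, one should first replace $M^0(\cdot,y_\ell)$ by its right-continuous representative; this changes nothing in $L^1$.
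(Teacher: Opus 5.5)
Your proof is correct. The paper gives no argument of its own here; it defers to Proposition 6.6 of \cite{LeslieTan2023}. Your uniform partition of $[-R^0,R^0]$ with atoms at the right cell endpoints, so that $M^0_N=M^0$ at every grid point, is the natural construction for the $L^\infty$ bound on the inverses, and the $L^1$ bound then follows from \eqref{e:W1id} as you say.

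One cosmetic fix: if $\rho^0_\ell$ charges $\{-R^0\}$, then for $0<m\le M^0(a_0)$ no index satisfies $M^0(a_{i-1})<m\le M^0(a_i)$. In that case index the first cell by $0<m\le M^0(a_1)$. There you still have $X^0(m)\in[a_0,a_1]$ and $X^0_N(m)=a_1$, so the bound is unaffected.
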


\subsection{The general case}
\label{ss:Existgen}

We are ready to tackle the proof of existence of entropy solutions associated to an arbitrary admissible triple $(M^0, A, \nu)$, which we fix for the remainder of this section.  We proceed in the following three steps.  
\begin{enumerate}[label = \arabic*.]
	\item We discretize the triple $(M^0, A, \nu)$ to obtain a sequence $(M^{0,k}, A^k, \nu_k)$, where each $\nu_k$ is atomic.  More precisely, $\nu_k = (r_k)_{\sharp} \nu$, where $r_k:\mathbb{R}^{d-1}\to \mathbb{R}^{d-1}$ projects $\supp \nu$ onto a finite subset of the standard dyadic grid.  
	
	\item Letting $M^k$ denote the unique entropy solution on $[0,T)$ associated to the triple $(M^{0,k}, A^k, \nu_k)$,
	we use \eqref{eq:main-unidir-optimal} to show that the sequence $(M^k(\cdot, r_k(\cdot), \cdot))_{k=1}^\infty$ is Cauchy in the Banach space $C([0,T); L^1(\mathbb{R}\times \mathbb{R}^{d-1};\, \dx \otimes \nu))$ and thus converges to a limit $M$ in this space.  
	\item We take limits as $k\to \infty$ and show that $M$ satisfies the Kruzkov entropy conditions \eqref{e:entropydistributional}, finishing the proof.
\end{enumerate}    

We begin by describing our discretization scheme, which will allow us to state Theorem \ref{thm:existence-main} below.  Then we carry out the rest of the agenda detailed above.  

\subsubsection{Discretization scheme and statement of the theorem}
Let $\cQ_k$ denote the collection of all cubes of the form $2^{-k}\mathbf{z} + (0, 2^{-k}]^{d-1}$, where $\mathbf{z}\in \Z^{d-1}$. Each cube in $\cQ_k$ is half-open, has edges parallel to the coordinate axes in $\mathbb{R}^{d-1}$, and has side length $2^{-k}$. For fixed $k$, all the cubes in $\cQ_k$ are disjoint and have union all of $\mathbb{R}^{d-1}$.  Let $\{Q_{k,\ell}\}_{\ell=1}^{L_k}$ be an enumeration of those cubes in $\cQ_k$ which have strictly positive $\nu$-measure, and let $y_{k,\ell}$ denote the center of $Q_{k,\ell}$.  Define $r_k:\mathbb{R}^{d-1}\to \mathbb{R}^{d-1}$ by setting $r_k(y) = y_{k,\ell}$ whenever $y\in Q_{k,\ell}$.  
Define the admissible triple $(M^{0,k}, A^k, \nu_k)$ via
\begin{equation} 
	\label{e:nuk}
	\nu_k:=(r_k)_\#\nu=\sum_{\ell=1}^{L_k}\nu(Q_{k,\ell})\,\delta_{y_{k,\ell}},
\end{equation}
\begin{equation}\label{eq:M0k-def-existence}
	M^{0,k}(x,y_{k,\ell})
	:=
	\frac{1}{\nu(Q_{k,\ell})}
	\int_{Q_{k,\ell}} M^0(x,\eta)\,\nu(\dd \eta),
\end{equation}
\begin{equation}\label{eq:Ak-def-existence}
	A^k(m,y_{k,\ell})
	:=
	\frac{1}{\nu(Q_{k,\ell})}
	\int_{Q_{k,\ell}} A(m,\eta)\,\nu(\dd \eta).
\end{equation}

\begin{THEOREM}
	\label{thm:existence-main}
	Assume $\phi$ satisfies \eqref{e:phieven} and  \eqref{e:phiLip2};  let $(M^0, A, \nu)$ be an admissible triple.  For each $k\in~\mathbb{N}$, define $(M^{0,k},A^k,\nu_k)$ as in \eqref{e:nuk}, \eqref{eq:M0k-def-existence}, \eqref{eq:Ak-def-existence}, and let $M^k$ denote the unique entropy solution associated to $(M^{0,k}, A^k, \nu_k)$.  Then $(M^k(\cdot, r_k(\cdot), \cdot))_{k=1}^\infty$ converges in $C([0,T);L^1(\mathbb{R}\times \mathbb{R}^{d-1},\dx\otimes \nu))$ to a function $M$,
	\begin{equation} 
		\label{e:MTkconv}
		\sup_{t\in [0,T)}\int_{\mathbb{R}^{d-1}} \bigl\| M^{k}(\cdot,r_k(y),t) - M(\cdot,y,t) \bigr\|_{L^1(\mathbb{R})} \,\nu(\dd y) \to 0,
	\end{equation}
	and $M$ is the unique entropy solution associated to the admissible triple $(M^0, A, \nu)$.  Furthermore,  
	for all $t\in [0,T)$, we have
	\begin{equation}
		\label{e:partialtMkconv}
		\nu_k \partial_t M^k(\cdot, \cdot, t) \stackrel{*}{\rightharpoonup} \nu \partial_t M(\cdot, \cdot, t),\qquad \text{ in } \cM(\mathbb{R}\times \mathbb{R}^{d-1}).
	\end{equation}
\end{THEOREM}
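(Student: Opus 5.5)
The plan follows the three-step agenda listed before the statement. \emph{Step 1 (uniform bounds).} First check that each $(M^{0,k},A^k,\nu_k)$ is admissible. Averaging preserves monotonicity in $x$, the values $0$ and $1$ outside $[-R^0,R^0]$, the normalization $A^k(0,\cdot)=0$, and the Lipschitz bound $[A^k(\cdot,y_{k,\ell})]_{\Lip}\le \sup_y [A(\cdot,y)]_{\Lip}=:L_A$. Theorem \ref{t:sliceexist} then gives entropy solutions $M^k$ with a common $R(T)=R^0+T(L_A+\|\phi\|_{L^\infty}2R^0)$, independent of $k$.

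\emph{Step 2 (Cauchy property).} For $k<k'$, couple $\nu_k$ and $\nu_{k'}$ by $\bpi_{k,k'}=(r_k,r_{k'})_\sharp\nu$. This coupling need not be optimal, so one must check that \eqref{eq:main-unidir-optimal} holds for any coupling. The $W_1(\nu,\tnu)$ term only arose from $\int|y-\widetilde y|\,\bpi$, so for an arbitrary coupling it is simply replaced by $\int|y-\widetilde y|\,\bpi_{k,k'}\le \sqrt{d-1}\,2^{-k}$. With this coupling, $\cD(t)=\int\|M^k(\cdot,r_k(y),t)-M^{k'}(\cdot,r_{k'}(y),t)\|_{L^1}\,\nu(\dy)$. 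We then need $\cD(0)\to 0$ and $\cA_\infty\to0$. These are martingale-type statements: $M^{0,k}(x,r_k(y))=\mathbb{E}_\nu[M^0(x,\cdot)\mid\mathcal{F}_k](y)$, where $\mathcal{F}_k$ is the dyadic $\sigma$-algebra, and similarly for $A^k$. Since $\bigvee_k\mathcal F_k$ is the Borel $\sigma$-algebra, martingale convergence in $L^1(\nu)$ applies.

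For $M^0$, apply it with values in $L^1([-R^0,R^0])$, an $L^1$-Bochner martingale, or pointwise in $x$ followed by dominated convergence.

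For $A$, note that $\{A(\cdot,y)\}$ is a uniformly Lipschitz, uniformly bounded family on $[0,1]$, hence lies in a compact subset of $C([0,1])$. Bochner-measurability is then fine, and martingale convergence in $L^1(\nu;C([0,1]))$ gives $\int\|A^k(\cdot,r_k(y))-A(\cdot,y)\|_{L^\infty}\,\nu(\dy)\to0$. Strong measurability follows because a Borel map into a separable space is strongly measurable.

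Plugging into \eqref{eq:main-unidir-optimal} with $s=0$ gives $\sup_{t<T}\cD(t)\to0$ as $k,k'\to\infty$. Hence $M^k(\cdot,r_k(\cdot),\cdot)$ is Cauchy in $C([0,T);L^1(\dx\otimes\nu))$, where continuity in time of each term is inherited from Theorem \ref{t:sliceexist}. Let $M$ be the limit. Property (a) of Definition \ref{def:entropysoln} (monotonicity and support bounds with $R(T)$) passes to the limit along an a.e.\ subsequence, and (c) follows from uniform convergence and $M^{0,k}\circ r_k\to M^0$.

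\emph{Step 3 (entropy inequality).} This is the main obstacle. Write \eqref{eq:entropy-M} for $M^k$ with the measure $\nu_k$. Since $\nu_k=(r_k)_\sharp\nu$, it reads as an integral against $\nu(\dy)$ with $y$ replaced by $r_k(y)$. We then test with $\zeta(x,r_k(y),t)$, which converges uniformly to $\zeta(x,y,t)$.

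The first two terms converge by the $L^1$ convergence of $M^k$ and the $L^1$-$L^\infty$ convergence of $A^k$, using the Lipschitz continuity of $m\mapsto\sgn(m-\alpha)(A(m)-A(\alpha))$.

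The nonlocal term $\zeta B[\nu_k,M^k]\,\p_x|M^k-\alpha|$ is the delicate one. First, $B[\nu_k,M^k](x,r_k(y),t)\to B[\nu,M](x,y,t)$ uniformly in $x$ in $L^1(\nu)$, using $\|\n_2\phi\|_{L^\infty}2^{-k}$ together with the $W_1$ bound $\|\phi\|_{L^\infty}\cD$; moreover, $x\mapsto B$ is uniformly Lipschitz. Second, integrate by parts in $x$: $\int \zeta B\,\p_x|M-\alpha|\,\dx=-\int |M-\alpha|\,\p_x(\zeta B)\,\dx$, where $\p_xB=\phi*(\nu\p_1M)$. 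It then remains to show $\phi*(\nu_k\p_1M^k)\to\phi*(\nu\p_1 M)$ in a sense strong enough to pair with $L^1$ convergence of $|M^k-\alpha|$.

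If this convergence holds only weakly, I would instead avoid the integration by parts and use the structure of $\p_x|M-\alpha|$ as a measure bounded by $\p_xM$. Concretely, I would express the nonlocal term as $\int\zeta B\,\p_x(\eta_\alpha\circ M)$ and pass to the limit via weak-$*$ convergence of the measures $\p_x(\eta_\alpha(M^k))\nu$ (Helly, tightness from $R(T)$), paired with the uniformly converging continuous functions $\zeta B_k$. The difficulty is that $\eta_\alpha\circ M^k$ converges only in $L^1$, which gives distributional and hence weak-$*$ convergence of $\p_x$ of it on bounded total variation sets. That suffices, since $\zeta B_k\to\zeta B$ uniformly in $x$, in $L^1(\nu)$ in $y$, uniformly Lipschitz in $x$.

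Finally, \eqref{e:partialtMkconv} follows from the equation $\p_tM=-\p_x A(M,y)+B\,\p_xM$ by the same convergences. Uniqueness is given by Theorem \ref{th:uniqueness-main}.
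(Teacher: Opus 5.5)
Your proposal is correct and follows the paper's proof essentially step for step. The Cauchy property comes from \eqref{eq:main-unidir-optimal} applied to $M^k$, $M^{k'}$ with the coupling $(r_k,r_{k'})_\sharp\nu$, and the entropy inequality passes to the limit by pairing the uniformly convergent $\zeta(x,r_k(y),t)\,B[\nu_k,M^k](x,r_k(y),t)$ against measures of at most unit mass per slice, then integrating by parts against the $x$-Lipschitz limit $\zeta B[\nu,M]$; your ``fallback'' route is exactly the paper's argument.

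The differences are minor. The paper notes that this coupling is actually optimal: since $r_k\circ r_{k'}=r_k$ and $r_k$ sends each point to a nearest grid center, it is an optimal map from $\nu_{k'}$ to $\nu_k$. So your extension of the stability estimate to arbitrary couplings is valid but not needed. Your martingale argument also supplies a justification of \eqref{e:M0kconv} and \eqref{e:Akconv} that the paper only asserts.
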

\begin{proof} 
	We first show that the sequence $(M^k(\cdot, r_k(\cdot), \cdot))_{k=1}^\infty$ is Cauchy in $C([0,T), L^1(\mathbb{R}\times \mathbb{R}^{d-1}, \dx \otimes \nu))$ and thus that \eqref{e:MTkconv} holds for some function $M$.  We then verify that the entropy conditions enjoyed by $M^k$ are inherited by $M$ in the limit.  The convergence \eqref{e:partialtMkconv} falls out as a consequence of these calculations, as we show at the end of the proof.

	\subsubsection{Step 1: Existence of a limit}
	
	We start by noting that, by construction, $r_k$ is an optimal transport map (with respect to the cost $c(y,z) = |y-z|$) from $\nu$ to $\nu_k$ (and also from $\nu_{\widetilde{k}}$ to $\nu_k$ for any $\widetilde{k}\ge k$).  Furthermore, since we have $|r_k(y) - y|\le C 2^{-k}$ for $\nu$-a.e. $y\in \mathbb{R}^{d-1}$, it follows that 
	\begin{equation}
		\label{e:nukconv}
		W_1(\nu_k, \nu) \le C 2^{-k},
		\qquad k\in \mathbb{N}.
	\end{equation}
	
	Next, the discretizations  \eqref{eq:M0k-def-existence} and \eqref{eq:Ak-def-existence} are designed exactly so that 
	\begin{equation} 
		\label{e:M0kconv}
		\int_{\mathbb{R}^{d-1}} \bigl\| M^{0,k}(\cdot,r_k(y)) - M^0(\cdot,y) \bigr\|_{L^1(\mathbb{R})} \,\nu(\dd y) \to 0,
	\end{equation} 
	\begin{equation} 
		\label{e:Akconv}
		\int_{\mathbb{R}^{d-1}} \bigl\| A^k(\cdot,r_k(y)) - A(\cdot,y) \bigr\|_{L^\infty([0,1])} \,\nu(\dd y) \to 0.
	\end{equation}
	
	Therefore, we may apply \eqref{eq:main-unidir-optimal} to $M^k$ and $M^{\widetilde{k}}$ to obtain 
	\begin{equation}
		\label{e:DklCauchy}
		\sup_{0\le t<T} \cD^{k,\widetilde{k}}(t)
		\le
		\bigg( \cD^{k,\widetilde{k}}(0) 
		+ C \sqrt{ 2R(T) \cA^{k,\widetilde{k}}_\infty T} 
		+ 
		CT
		\, \big( \cA^{k,\widetilde{k}}_\infty + R(T)\|\n_2\phi\|_{L^\infty} W_1(\nu_k,\nu_{\widetilde{k}}) \big) \bigg)e^{C\|\phi\|_{L^\infty}T},
	\end{equation}
	where 
	\begin{align*} 
		\cD^{k,\widetilde{k}}(\tau) & = \int_{\mathbb{R}^{d-1}} \bigl\| M^{k}(\cdot,r_k(y),\tau) - M^{\widetilde{k}}(\cdot,r_{\widetilde{k}}(y),\tau) \bigr\|_{L^1(\mathbb{R})} \,\nu(\dd y),
		\qquad \tau\in [0,T),  \\
		\cA^{k,\widetilde{k}} & = \int_{\mathbb{R}^{d-1}} \bigl\| A^k(\cdot,r_k(y)) - A^{\widetilde{k}}(\cdot,r_{\widetilde{k}}(y)) \bigr\|_{L^\infty([0,1])} \,\nu(\dd y), \\
		R(T) & = R^0 + T \big( \sup_y [A(\cdot, y)]_{\Lip} + \sup_y \Phi(2R^0, y)\big),
	\end{align*} and $R^0>0$ is chosen large enough so that $\{x:M^0(x,y)\notin \{0,1\}\} \subset [-R^0,R^0]$ for $\nu$-a.e. $y\in \mathbb{R}^{d-1}$. The inequality \eqref{e:DklCauchy}, together with the convergences \eqref{e:M0kconv}, \eqref{e:Akconv}, guarantees that the sequence $(M^k(\cdot, r^k(\cdot), \cdot))_{k=1}^\infty$ is Cauchy (and thus converges to some function $M$) in the space $C([0,T);L^1_{\loc}(\mathbb{R}\times \mathbb{R}^{d-1}, \dx \otimes \nu))$.
	
	\begin{REMARK}
		Use of the $L^\infty$ norm in $m$ rather than the Lipschitz seminorm is crucial for \eqref{e:Akconv}, since $A(m,y)$ is only assumed to be Lipschitz in $m$ (with a uniform-in-$y$ bound on the Lipschitz seminorm).  One cannot expect in general to generate a nicer $A$ than what we have assumed from arbitrary $\rho^0\in \cP_c(\mathbb{R}\times \mathbb{R}^{d-1})$ and $u^0\in L^\infty(\rho^0)$.  This is why the stability estimate \eqref{eq:main-unidir-optimal} is crucial for us.  However, if we somehow knew that $A$ were better behaved (for example, if $\partial_m A(m,y)$ were continuous in~$m$), then we would instead be able to replace the $L^\infty$ norm in \eqref{e:Akconv} with a Lipschitz seminorm, and we could use \eqref{eq:stab-pi} instead of \eqref{eq:main-unidir-optimal} in our existence proof.  This issue also has implications for the rate of convergence; see Sections \ref{ss:rate} and \ref{ss:rate2} for more details. 
	\end{REMARK}

	\subsubsection{Step 2: Verification of the entropy conditions}
	
	The main task that remains is to show that the limit $M$ constructed above satisfies the entropy condition \eqref{e:entropydistributional}.  We use the fact that each $M^k$ is an entropy solution associated to $(M^{0,k}, A^k, \nu_k)$, and therefore satisfies a version of \eqref{eq:entropy-M}, which we record after a few manipulations using the definition $\nu_k = (r_k)_{\sharp}\nu$.  Fix an arbitrary nonnegative $\zeta\in C^\infty_c(\mathbb{R}\times \mathbb{R}^{d-1}\times (0,T))$, and set 
	\[
	\widehat{M}^k(x,y,t) = M^k(x,r_k(y),t),
	\qquad 
	\widehat{A}^k(m,y) = A^k(m,r_k(y)),
	\qquad 
	\widehat{\zeta}^k(x,y,t) = \zeta(x,r_k(y),t),
	\]
	\[
	q_\alpha(m,y) = \sgn(m-\alpha) \big( A(m,y) - A(\alpha,y) \big),
	\qquad 
	q_\alpha^k(m,y) = \sgn(m-\alpha) \big( \widehat{A}^k(m,y) - \widehat{A}^k(\alpha,y) \big).
	\]

	Then the resulting version of \eqref{eq:entropy-M} reads as follows.
	\begin{equation}
		\label{e:Mkentropy}
		0 \le \int_0^T\!\!\int_{\mathbb{R}^{d-1}}\int_{\mathbb{R}}  \bigl[ |\widehat{M}^k - \alpha| \partial_t \widehat{\zeta}^k +
		q_\alpha^k(\widehat{M}^k, y)
		\partial_x \widehat{\zeta}^k 
		+ \widehat{\zeta}^k B[\nu_k, M^k](x,r_k(y),t) \partial_x |\widehat{M}^k - \alpha| \bigl]\,\dx\,\nu(\dy)\,\dt.
	\end{equation}
	
	We show that each of the terms on the left side above converges to its natural limit, and thus that \eqref{eq:entropy-M} holds for $M$ as well.  We note first that the contributions from $\widehat{\zeta}^k$, $\partial_t \widehat{\zeta}^k$, and $\partial_x \widehat{\zeta}^k$ pose no difficulty.  Indeed, for $\nu$-a.e. $y\in \mathbb{R}^{d-1}$ and all $x\in \mathbb{R}$, $t\in [0,T)$, we have 
	\[
	|\widehat{\zeta}^k(x,y,t) - \zeta(x,y,t)| = |\zeta(x,r_k(y),t) - \zeta(x,y,t)| \le (\sup |\n_2 \zeta|)\cdot |r_k(y) - y|\le C_\zeta 2^{-k}.
	\]
	Similar bounds hold for $\partial_t \widehat{\zeta}^k$ and $\partial_x \widehat{\zeta}^k$.  Here and in what follows, $C_\zeta$ will denote a constant that may depend on $\diam\supp \zeta$ and the supremum of $\zeta$ and its partial derivatives of order up to $2$. 
	
	The convergence \eqref{e:MTkconv} says that $\widehat{M}^k\to M$ in $C([0,T);L^1_{\loc}(\mathbb{R}\times \mathbb{R}^{d-1}, \dx \otimes \nu))$, and the convergence of the first term in \eqref{e:Mkentropy} follows immediately:
	\[
	\int_0^T\!\!\int_{\mathbb{R}^{d-1}}\int_{\mathbb{R}}  |\widehat{M}^k - \alpha| \partial_t \widehat{\zeta}^k \,\dx\,\nu(\dy)\,\dt
	\to 
	\int_0^T\!\!\int_{\mathbb{R}^{d-1}}\int_{\mathbb{R}}  |M - \alpha| \partial_t \zeta \,\dx\,\nu(\dy)\,\dt, \quad \text{ as } k\to \infty.
	\]
	Next, we write 
	\begin{align*}
		& \int_0^T\!\!\int_{\mathbb{R}^{d-1}} |q_\alpha^k(\widehat{M}^k,y)\widehat{\zeta}^k - q_\alpha(M,y)\zeta| \, \,\dx\, \nu(\dy)\,\dt \\
		&  \;\; \le 
		\int\!\!\!\int\!\!\!\int_{\supp \widehat{\zeta}^k} \bigl[ |q_\alpha^k(\widehat{M}^k,y) - q_\alpha(\widehat{M}^k,y)| +  |q_\alpha(\widehat{M}^k,y) - q_\alpha(M,y)| \bigl]  \,\dx\, \nu(\dy)\,\dt 
		+ C_\zeta 2^{-k} \|A\|_{L^\infty(\dx \otimes \nu)} \\
		& \;\; \le C_\zeta \int_{\mathbb{R}^{d-1}} \|A^k(\cdot, r_k(y)) - A(\cdot, y)\|_{L^\infty} \nu(\dy) + C_\zeta \|\partial_m A\|_{L^\infty} \int_{\mathbb{R}^{d-1}} \|M^k(\cdot, r_k(y),t) - M(\cdot,y)\|_{L^1} \nu(\dy) + C_\zeta 2^{-k} \|A\|_{L^\infty}. 
	\end{align*}
	The right side converges to zero  by \eqref{e:Akconv} and \eqref{e:MTkconv}.  
	
	It remains to deal with the nonlocal term.  We first show that the quantity $B[\nu_k,M^k](\cdot, r_k(\cdot), \cdot)$ converges to $B[\nu, M]$ in $L^\infty(\dx\otimes \nu\otimes \dt)$. Indeed, for $\nu$-a.e. $y$, we have
	\begin{align*}
		& |B[\nu_k, M^k](x, r_k(y), t) - B[\nu,M](x,y,t)|\\
		& \quad \le \int \!\!\!\int |\Phi(x-\xi, r_k(y) - r_k(\eta)) - \Phi(x - \xi, y - \eta)| \partial_1 \widehat{M}^k(\dd\xi, \eta, t) \, \nu(\dd\eta)
		+ \int\!\!\!\int \phi(x - \xi, y - \eta)|(\widehat{M}^k - M)(\xi, \eta, t)| \,\dd \xi \, \nu(\dd\eta) \\
		& \quad \le C 2^{-k} R(T) \|\n_2 \phi\|_{L^\infty} + \|\phi\|_{L^\infty} \sup_{t\in [0,T)} \int \|M^k(\cdot, r_k(y),t) - M(\cdot, y, t)\|_{L^1(\mathbb{R})}\,\nu(\dy), 
	\end{align*}
	and the right side converges to zero by \eqref{e:MTkconv}.  This allows us to treat the full nonlocal term in \eqref{e:Mkentropy} as follows:
	\begin{align*}
		& \bigg| \int_0^T\!\!\int_{\mathbb{R}^{d-1}}\int_{\mathbb{R}}  
		\widehat{\zeta}^k B[\nu_k, M^k](x,r_k(y),t) \partial_x |\widehat{M}^k - \alpha| \bigl]\,\dx\,\nu(\dy)\,\dt
		- 
		\int_0^T\!\!\int_{\mathbb{R}^{d-1}}\int_{\mathbb{R}}  
		\zeta B[\nu, M] \partial_x |M^k - \alpha| \bigl]\,\dx\,\nu(\dy)\,\dt \bigg| \\
		& \quad \le \int_0^T\!\!\int_{\mathbb{R}^{d-1}}\int_{\mathbb{R}} \big| \widehat{\zeta}^k B[\nu_k, M^k](x, r_k(y), t) - \zeta B[\nu,M](x,y,t) \big|  \partial_x |\widehat{M}^k - \alpha| \,\dx\,\nu(\dy)\,\dt \\
		& \qquad + \int_0^T\!\!\int_{\mathbb{R}^{d-1}}\int_{\mathbb{R}} \big| \partial_x\big( \zeta B[\nu,M] \big) \big| |\widehat{M}^k - M| \,\dx\,\nu(\dy)\,\dt \\
		& \quad \le \|\widehat{\zeta}^k B[\nu_k, M^k](\cdot, r_k(\cdot), \cdot) - \zeta B[\nu,M] \|_{L^\infty(\dx \otimes \nu \otimes \dt)} + C_\zeta \|\phi\|_{L^\infty} \sup_{t\in [0,T)} \int \|M^k(\cdot, r_k(y), t) - M(\cdot, y, t)\|_{L^1} \,\nu(\dy).
	\end{align*}
	We have already shown that both terms on the right converge to zero. 
	
	Combining the three convergences above, we may take $k\to \infty$ in all terms in \eqref{e:Mkentropy} to conclude that 
	\begin{equation}
		0 \le \int_0^T\!\!\int_{\mathbb{R}^{d-1}}\int_{\mathbb{R}}  \bigl[ |M - \alpha| \partial_t \zeta +
		q_\alpha(M, y)
		\partial_x \zeta 
		+ \zeta B[\nu, M] \partial_x |M - \alpha| \bigl]\,\dx\,\nu(\dy)\,\dt,
	\end{equation}
	which is the same as the required \eqref{eq:entropy-M}.
	
	For the final claim \eqref{e:partialtMkconv}, we note that, since the entropy inequality implies that the scalar balance law \eqref{e:ScalarBalance} holds in the sense of distributions, it suffices to take the weak-$*$ limit on both sides of the equation 
	\[
	\partial_t M^k \nu_k = - \partial_x(A^k(M^k,y)) \nu_k + B[\nu_k,M^k]  \partial_x M^k \nu_k.
	\]
	Our previous computations involving the limits  \eqref{e:Mkentropy} show that the right side of this identity converges weak-$*$ to the measure $-\partial_x(A(M,y))\nu + B[\nu,M]\partial_x M \nu = \partial_t M \nu$.  This finishes the proof of Theorem \ref{thm:existence-main}.
\end{proof}

\subsection{Rates of convergence}

\label{ss:rate}

In our analysis so far, we have been careful to make sure our arguments work for general admissible triples $(M^0, A, \nu)$, without assuming additional regularity.  However, if such regularity is available, we can use it to obtain rates of convergence for the approximation schemes we have already discussed above.   

\begin{COROL}
	\label{c:MktoMrate}
	Assume the hypotheses of Theorem \ref{thm:existence-main}, and let $M$ denote the unique entropy solution generated from $(M^0, A, \nu)$.  Assume that $M^0$ satisfies the following H\"older-type estimate, for some $\beta\in (0,1]$ and some constant $C$:
	\begin{equation}
		\label{e:M0Holdery}
		\|M^0(\cdot,y) - M^0(\cdot,\widetilde{y})\|_{L^1(\mathbb{R})}
		\le C |y-\widetilde{y}|^\beta,
		\qquad \nu\text{-a.e. } y,\widetilde{y}\in \mathbb{R}^{d-1},
	\end{equation}
	Assume additionally that $A$ satisfies either (for some $\gamma\in (0,1]$)
	\begin{equation}
		\label{e:AHoldery}
		|A(m,y) - A(m,\widetilde{y})| \le C|y - \widetilde{y}|^\gamma,
		\qquad m\in [0,1],\;\nu\text{-a.e. } y,\widetilde{y}\in \mathbb{R}^{d-1},
	\end{equation}
	or 
	\begin{equation}
		\label{e:partialmAHoldery}
		|\partial_m A(m,y) - \partial_m A(m,\widetilde{y})| 
		\le C |y-\widetilde{y}|^\beta,
		\qquad m\in [0,1], \;\nu\text{-a.e. } y,\widetilde{y}\in \mathbb{R}^{d-1}.
	\end{equation}
	\begin{enumerate}[label = (\alph*)]
		\item If $A$ satisfies \eqref{e:AHoldery}, then
		\begin{equation}
			\label{e:MTkconvrate1}
			\sup_{t\in [0,T)}\int_{\mathbb{R}^{d-1}} \bigl\| M^{k}(\cdot,r_k(y),t) - M(\cdot,y,t) \bigr\|_{L^1(\mathbb{R})} \,\nu(\dd y) \le C(1+T) 2^{-\min\{\beta, \frac{\gamma}{2}\} k}.    
		\end{equation}
		\item If $A$ satisfies \eqref{e:partialmAHoldery}, then
		\begin{equation} 
			\label{e:MTkconvrate2}
			\sup_{t\in [0,T)}\int_{\mathbb{R}^{d-1}} \bigl\| M^{k}(\cdot,r_k(y),t) - M(\cdot,y,t) \bigr\|_{L^1(\mathbb{R})} \,\nu(\dd y) \le C(1+T) 2^{-\beta k}.
		\end{equation}
	\end{enumerate}
\end{COROL}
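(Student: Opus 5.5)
Both rates follow from comparing $M^k$ directly with $M$ through Theorem \ref{th:uniqueness-main}. The coupling will be $\bpi=(r_k\times \mathrm{id})_\sharp\nu$ between $\nu_k$ and $\nu$; it is optimal since $r_k$ is an optimal transport map. With this choice $\cD(t)$ is exactly the quantity in \eqref{e:MTkconvrate1}--\eqref{e:MTkconvrate2}, and $W_1(\nu_k,\nu)\le C2^{-k}$ by \eqref{e:nukconv}. Both $M^k$ and $M$ admit a common $R(T)\le R^0+CT$, because averaging preserves the support bounds and the Lipschitz bound on $A$. So the work reduces to bounding the initial discrepancy $\cD(0)$ and the flux discrepancy. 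The $(1+T)$ prefactor absorbs the factors $R(T)$ and $T$, while $e^{C\|\phi\|_{L^\infty}T}$ is absorbed into $C$ (allowed to depend on $T$).

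\textbf{Initial data.} For $\nu$-a.e. $y\in Q_{k,\ell}$, Jensen and \eqref{e:M0Holdery} give
\[
\|M^{0,k}(\cdot,r_k(y))-M^0(\cdot,y)\|_{L^1}\le \frac{1}{\nu(Q_{k,\ell})}\int_{Q_{k,\ell}}\|M^0(\cdot,\eta)-M^0(\cdot,y)\|_{L^1}\,\nu(\dd\eta)\le C2^{-\beta k},
\]
since $|\eta-y|\le C2^{-k}$. Hence $\cD(0)\le C2^{-\beta k}$.

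\textbf{Case (a).} The same averaging argument with \eqref{e:AHoldery} gives $\cA_\infty\le C2^{-\gamma k}$. Plugging into \eqref{eq:main-unidir-optimal} with $s=0$ yields
\[
\cD(t)\le C\bigl(2^{-\beta k}+\sqrt{R(T)T}\,2^{-\gamma k/2}+T(2^{-\gamma k}+R(T)2^{-k})\bigr).
\]
Since $\beta,\gamma\le1$, this is at most $C(1+T)2^{-\min\{\beta,\gamma/2\}k}$. The square-root term is the reason for the $\gamma/2$.

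\textbf{Case (b).} Here we use the linear estimate \eqref{eq:stab-pi} instead, which needs the Lipschitz seminorm of the flux difference. Since $A^k(\cdot,r_k(y))-A(\cdot,y)$ is an average of $A(\cdot,\eta)-A(\cdot,y)$ over $\eta\in Q_{k,\ell}$, and $A(0,\cdot)=0$, its $m$-derivative is the corresponding average of $\partial_mA(\cdot,\eta)-\partial_mA(\cdot,y)$. By \eqref{e:partialmAHoldery} this gives
\[
[A^k(\cdot,r_k(y))-A(\cdot,y)]_{\Lip}\le C2^{-\beta k}.
\]
Then \eqref{eq:stab-pi} with $s=0$ gives $\cD(t)\le C2^{-\beta k}+CT2^{-\beta k}+8TR(T)\|\n_2\phi\|_{L^\infty}C2^{-k}$, which is $\le C(1+T)2^{-\beta k}$.

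\textbf{Main obstacle.} The delicate point is justifying that differentiation in $m$ commutes with the $\nu$-average defining $A^k$. We only know $A(\cdot,\eta)$ is Lipschitz, so $\partial_mA$ exists a.e. in $m$. The plan is to work directly with difference quotients $(A(m_1,\eta)-A(m_2,\eta))/(m_1-m_2)=\int_{m_2}^{m_1}\partial_mA\,\dm/(m_1-m_2)$, and average \eqref{e:partialmAHoldery} inside this integral. This avoids any pointwise differentiation of $A^k$.

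A lesser issue is that the $R(T)$ factor makes the bound of size $R^0T+T^2$ rather than linear in $T$. For fixed $T$ this is harmless and can be absorbed into the constant.
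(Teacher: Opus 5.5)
Your proposal is correct and follows the paper's proof. The paper likewise derives the three elementary averaging bounds \eqref{e:M0kconvrate}, \eqref{e:Akconvrate}, \eqref{e:AkLipconvrate} and substitutes them, together with \eqref{e:nukconv}, into \eqref{eq:main-unidir-optimal} for case (a) and \eqref{eq:stab-pi} for case (b), implicitly using the coupling induced by $r_k$. Your added details are accurate refinements of points the paper leaves implicit: the difference-quotient justification in case (b), and the observation that the constant in $C(1+T)$ must be allowed to depend on $T$ because of the factors $e^{C\|\phi\|_{L^\infty}T}$ and $TR(T)$.
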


\begin{proof}
	Under the stated hypotheses, a few elementary estimates yield the following.  Whenever \eqref{e:M0Holdery} holds, we have 
	\begin{equation} 
		\label{e:M0kconvrate}
		\int_{\mathbb{R}^{d-1}} \bigl\| M^{0,k}(\cdot,r_k(y)) - M^0(\cdot,y) \bigr\|_{L^1(\mathbb{R})} \,\nu(\dd y) \le C 2^{-\beta k}.
	\end{equation} 
	Whenever \eqref{e:AHoldery} holds, we have 
	\begin{equation} 
		\label{e:Akconvrate}
		\int_{\mathbb{R}^{d-1}} \sup_{m\in [0,1]} \big| A^k(m,r_k(y)) - A(m,y) \big| \,\nu(\dd y) \le C 2^{-\gamma k}
	\end{equation}
	And finally, whenever \eqref{e:partialmAHoldery} holds, 
	\begin{equation} 
		\label{e:AkLipconvrate}
		\int_{\mathbb{R}^{d-1}} \bigl[ A^k(\cdot,r_k(y)) - A(\cdot,y) \bigr]_{\Lip([0,1])} \,\nu(\dd y) \le C 2^{-\beta k}
	\end{equation}
	Substituting these bounds (together with \eqref{e:nukconv}) into either \eqref{eq:main-unidir-optimal} or  \eqref{eq:stab-pi} yields \eqref{e:MTkconvrate1} and \eqref{e:MTkconvrate2}. 
\end{proof}

Combining Corollary \ref{c:MktoMrate} with the last part of Theorem \ref{t:SPCSentropy}, we may obtain a rate of convergence for a sequence of SPCS solutions to a general entropy solution of \eqref{e:ScalarBalance}.  

\begin{COROL}\label{c:SPCSrate}
	Let $(M^0, A, \nu)$ be an admissible triple.  Assume $M^0$ satisfies \eqref{e:M0Holdery} 
	and that $A$ satisfies 
	\begin{equation}
		\label{e:AHolderm}
		|\partial_m A(m,y) - \partial_m A(\widetilde{m},y)|\le C|m-\widetilde{m}|^\epsilon, \qquad m,\widetilde{m}\in [0,1],\;\nu\text{-a.e. } y\in \mathbb{R}^{d-1}.
	\end{equation}
	We will assume additionally that $A$ satisfies either \eqref{e:AHoldery} or \eqref{e:partialmAHoldery}.  Define $(M^{0,k}, A^k, \nu_k)$ as in Theorem \ref{t:existence} (so that each $A^k$ also satisfies \eqref{e:AHolderm}, with $\nu$ replaced by $\nu_k$ but with the same constant $C$); then, for each $k$, let $(M^{0,k}_N, A^k_N)$ be any SPCS initial data and flux that guarantee the convergence \eqref{e:MNtoMdiscrate} for $M^k$. Then the sequence $(M^k_{N(k)})_{k=1}^\infty$ (with $N(k)$ specified below) satisfies the following.  
	\begin{enumerate}[label = (\alph*)]
		\item If $A$ satisfies \eqref{e:AHoldery} and $N(k)\sim 2^{\min\{\beta, \frac{\gamma}{2}\} k/\epsilon}$, then 
		\begin{equation}
			\sup_{t\in [0,T)} \int_{\mathbb{R}^{d-1}} \|M^k_{N(k)}(\cdot, r_k(y), t) - M(\cdot, y, t)\|_{L^1(\mathbb{R})} \nu(\dy) \le C (1+T) 2^{-\min\{\beta, \frac{\gamma}{2}\} k}. 
		\end{equation}  
		\item If $A$ satisfies \eqref{e:partialmAHoldery} and $N(k)\sim 2^{\beta k/\epsilon}$, then 
		\begin{equation}
			\sup_{t\in [0,T)} \int_{\mathbb{R}^{d-1}} \|M^k_{N(k)}(\cdot, r_k(y), t) - M(\cdot, y, t)\|_{L^1(\mathbb{R})} \nu(\dy) \le C (1+T) 2^{-\beta k}. 
		\end{equation}  
	\end{enumerate}
\end{COROL}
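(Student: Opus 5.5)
The plan is a triangle inequality in the space $L^1(\mathbb{R}\times\mathbb{R}^{d-1},\dx\otimes\nu)$, uniformly in $t\in[0,T)$:
\[
\int \|M^k_{N(k)}(\cdot,r_k(y),t)-M(\cdot,y,t)\|_{L^1}\,\nu(\dy)
\le \int \|M^k_{N(k)}(\cdot,r_k(y),t)-M^k(\cdot,r_k(y),t)\|_{L^1}\,\nu(\dy)
+ \int \|M^k(\cdot,r_k(y),t)-M(\cdot,y,t)\|_{L^1}\,\nu(\dy).
\]
The second term is controlled directly by Corollary \ref{c:MktoMrate}. Under \eqref{e:AHoldery} it is at most $C(1+T)2^{-\min\{\beta,\gamma/2\}k}$ by \eqref{e:MTkconvrate1}. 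Under \eqref{e:partialmAHoldery} it is at most $C(1+T)2^{-\beta k}$ by \eqref{e:MTkconvrate2}.

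For the first term, change variables using $\nu_k=(r_k)_\sharp\nu$. This rewrites it as
\[
\sum_\ell \nu(Q_{k,\ell})\,\|M^k_N(\cdot,y_{k,\ell},t)-M^k(\cdot,y_{k,\ell},t)\|_{L^1},
\]
which is exactly the quantity estimated in \eqref{e:MNtoMdiscrate} of Theorem \ref{t:sliceexist}, with $\mu_\ell=\nu(Q_{k,\ell})$. The hypothesis needed there is that $\partial_m A^k(\cdot,y_{k,\ell})$ is $\epsilon$-H\"older. Averaging over $Q_{k,\ell}$ as in \eqref{eq:Ak-def-existence} preserves \eqref{e:AHolderm} with the same constant: differentiate under the $\nu$-average and use the triangle inequality. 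So the first term is at most $C_k(1+T)N^{-\epsilon}$.

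The main obstacle is the dependence of $C_k$ on $k$. If $C_k$ grew with $k$, the choice of $N(k)$ would have to compensate. I would therefore go back through the proof of \eqref{e:MNtoMdiscrate}, i.e. the 1D argument of \cite{LeslieTan2023} applied slice by slice, and check that the constant depends only on the following quantities:
\begin{itemize}
\item $R^0$, which is uniform in $k$ because averaging preserves the support bound;
\item $\sup_\ell[A^k(\cdot,y_{k,\ell})]_{\Lip}\le\sup_y[A(\cdot,y)]_{\Lip}$;
\item the H\"older constant in \eqref{e:AHolderm};
\item $\|\phi\|_{L^\infty}$ and $R(T)$.
\end{itemize}
None of these depend on the number of slices $L_k$. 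One concrete way to carry out this check is to apply the first stability estimate \eqref{eq:stab-pi} with $\nu=\widetilde\nu=\nu_k$, so that the $W_1$ term vanishes. The discretization errors are then only $\|M^{0,k}-M^{0,k}_N\|_{L^1}\lesssim R^0/N$, from Lemma \ref{l:ICspecialrate}, and $[A^k-A^k_N]_{\Lip}\lesssim N^{-\epsilon}$, from piecewise-linear interpolation of an $\epsilon$-H\"older derivative on a mesh of size $\sim 1/N$. Both are uniform in $k$, and this gives $C(1+T)N^{-\epsilon}$ with $C$ independent of $k$. One point to confirm is that the mass nodes $\theta_{i,\ell}$ furnished by Lemma \ref{l:ICspecialrate} have spacing $O(1/N)$, which is needed for the Lipschitz interpolation bound.

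Given that uniformity, choose $N(k)\sim 2^{\min\{\beta,\gamma/2\}k/\epsilon}$ in case (a) and $N(k)\sim 2^{\beta k/\epsilon}$ in case (b). Then $N(k)^{-\epsilon}$ matches the rate of the second term, and adding the two bounds gives both claimed estimates.
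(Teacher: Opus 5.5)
Your argument is the paper's argument. The paper obtains the corollary by combining Corollary \ref{c:MktoMrate} (for $M^k\to M$) with the rate \eqref{e:MNtoMdiscrate} from Theorem \ref{t:sliceexist} (for $M^k_N\to M^k$, after your rewriting of the $\nu$-integral as a $\nu_k$-sum), and then choosing $N(k)$ so that $N(k)^{-\epsilon}$ matches the transverse rate.

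Your check that the constant in \eqref{e:MNtoMdiscrate} is uniform in $k$ is a reasonable addition that the paper leaves implicit. Running it through \eqref{eq:stab-pi} with $\nu=\widetilde\nu=\nu_k$ and the diagonal coupling works. However, the point you flag does fail as stated. Lemma \ref{l:ICspecialrate} gives no control on the masses $m_{i,\ell,N}$. Its $L^\infty$ bound on $X^0-X^0_N$ comes from discretizing space uniformly, so a single node can carry mass of order one, for example at an atom of $\partial_x M^{0,k}(\cdot,y_{k,\ell})$. On such a mass interval the linear interpolant of $A^k$ is not $N^{-\epsilon}$-close in Lipschitz seminorm.

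The fix is to use uniform mass nodes $\theta_i=i/N$ with $x^0_i=X^{0,k}(i/N,y_{k,\ell})$. Where these coincide, perturb them slightly apart: by rule (a), initially coincident SPCS particles stick forever, which can violate the Oleinik condition. By \eqref{e:W1id} and monotonicity of $X^{0,k}$, the $L^1$ error of the initial data is still $O(R^0/N)$. The interpolation error is $O(N^{-\epsilon})$, so you obtain $C(1+T)N^{-\epsilon}$ with $C$ independent of $k$, as you wanted.
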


\begin{REMARK}
	We note that the total number of agents involved in the discretization in Statement (a) of the Corollary is $O(2^{k\big( \min\{\beta, \frac{\gamma}{2}\}/\epsilon + (d-1)\big)})$, while the total number of agents involved in Statement (b) is $O(2^{k(d-1 + \frac{\beta}{\epsilon})})$.  Thus, in (b), the rate of convergence `per agent' is $O(n^{-\beta/(d-1 + \frac{\beta}{\epsilon})})$, where $n$ is the number of agents.  This of course reduces to $O(n^{-1/d})$ when $\beta = \epsilon = 1$, whereas the `best-case' scenario for the estimate in statement (a) is a rate of $O(n^{-1/(2d-1)})$.
\end{REMARK}

\section{Weak solutions of the unidirectional Euler Alignment system}

\label{s:Recovery}

In this section we demonstrate how to use the theory developed in the previous sections to generate uniquely determined weak solutions of the unidirectional Euler Alignment system (Section \ref{ss:entropicselection}), and how to approximate them via sticky particle dynamics (Sections \ref{ss:approximation} and \ref{ss:rate2}).  We start by making precise what we mean by `weak solution.'

\medskip
\begin{DEF}\label{def:weak_EA}
	Let \(\rho^0 \in \mathcal{P}_c(\mathbb{R}\times\mathbb{R}^{d-1})\) be a compactly supported probability measure, and let \(u^0 \in L^\infty(\rho^0)\) be a measurable velocity field. We say that a pair \((\rho, u)\) is a \textit{weak solution} to the unidirectional Euler Alignment system \eqref{e:EAuni} on \([0,T)\) associated to the initial data \((\rho^0, u^0)\) if the following conditions hold:
	\begin{enumerate}[label=(\alph*)]
		\item \(\rho \in C([0,T); \mathcal{P}_c(\mathbb{R}\times\mathbb{R}^{d-1}))\).
		\item $u(t)\in L^\infty(\rho(t))$ for all $t\in [0,T)$.
		\item The pair \((\rho, u)\) satisfies  \eqref{e:EAuni} in the sense of distributions on \(\mathbb{R}\times\mathbb{R}^{d-1}\times(0,T)\).
		\item As $t\to 0+$, we have $\rho(t)\stackrel{*}{\rightharpoonup}\rho^0$ and $\rho u(t)\stackrel{*}{\rightharpoonup}\rho^0 u^0$ in $\cM(\mathbb{R}^d)$.
	\end{enumerate}
\end{DEF}

\subsection{The entropic selection principle}

\label{ss:entropicselection}

We now state two fundamental results.  The first of these, Theorem \ref{t:entropicselection}, provides a procedure, which we refer to as the \textit{entropic selection principle}, for generating a uniquely determined weak solution of the unidirectional Euler Alignment system \eqref{e:EAuni}.  Proposition  \ref{p:entropicdiscrete} asserts that the procedure associates atomic initial data with atomic solutions.  

\begin{THEOREM}
	\label{t:entropicselection} Assume $\phi$ satisfies \eqref{e:phieven} and \eqref{e:phiLip2}.
	Suppose $\rho^0\in \cP_c(\mathbb{R}\times \mathbb{R}^{d-1})$ and $u^0\in L^\infty(\rho^0)$.  Define $(\rho, u)$ through the following procedure:
	\begin{enumerate}[label = \arabic*.]
		\item (Construct the admissible triple) Let $p_2:\mathbb{R}\times \mathbb{R}^{d-1}\to \mathbb{R}$ denote projection onto the second component: $p_2(x,y) = y$, and denote $\nu = (p_2)_{\sharp} \rho^0$.  Disintegrate $\rho^0$ via $\rho^0(\dx, \dy) = \rho^0_y(\dx) \nu(\dy)$ and define $M^0:\mathbb{R}\times \mathbb{R}^{d-1}\to [0,1]$ via $M^0(x,y) = \rho^0_y((-\infty,x])$.  Define $\psi^0 = u^0 + \Phi*\rho^0$ and $A:[0,1]\times \mathbb{R}^{d-1}\to \mathbb{R}$ via  
		\begin{equation}
			\label{e:defA}
			A(m,y) = \int_0^m \psi^0(X^0(\widetilde{m},y),y)\,\dd\widetilde{m},
			\qquad X^0(m,y) = \inf\{x\in \mathbb{R}: M^0(x,y)\ge m\} = (M^0(\cdot, y))^{-1}(m).
		\end{equation}
		\item (Generate the solution to the scalar balance law) Let $M$ denote the unique entropy solution of \eqref{e:ScalarBalance} associated to the admissible triple $(M^0,A,\nu)$.
		\item (Generate the solution of the Euler Alignment system) Define $\rho(t)$ via $\rho(\dx, \dy, t) = \partial_x M(\dx, y, t)\nu(\dy)$ and $P(t)$ via $P(\dx, \dy, t) = -\partial_t M(\dx, y, t)\nu(\dy)$, and let $u(t)\in L^\infty(\rho(t))$ denote the Radon-Nikodym derivative of $P(t)$ with respect to $\rho(t)$; that is, $u(t) = \frac{\dd P(t)}{\dd\rho(t)}$. 
	\end{enumerate}
	Then $(\rho, u)$ is a weak solution of \eqref{e:EAuni} in the sense of Definition \ref{def:weak_EA}.  If $\nu$ is atomic, then the assumption \eqref{e:phiLip2} can be relaxed to~\eqref{e:phiL1locx}.
\end{THEOREM}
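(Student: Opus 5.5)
The plan is to verify in turn that the triple built in Step 1 is admissible, that $(\rho,u)$ has the regularity required by Definition~\ref{def:weak_EA}, and then that it satisfies \eqref{e:EAuni} distributionally and attains its initial data.

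\textbf{Admissibility.} Compact support of $\rho^0$ gives compact support of $\nu$ and the bound $R^0$. The function $M^0(\cdot,y)$ is a nondecreasing CDF, and measurability in $y$ follows from the disintegration. Since $\Phi*\rho^0$ is bounded on bounded sets, $\psi^0$ is bounded on $\supp\rho^0$. Hence $A(\cdot,y)$ is Lipschitz with $[A(\cdot,y)]_{\Lip}\le \|u^0\|_{L^\infty}+\sup|\Phi*\rho^0|$, uniformly in $y$, and $A(0,y)=0$. Theorem~\ref{t:existence} (or Theorem~\ref{t:sliceexist} when $\nu$ is atomic, which uses only \eqref{e:phiL1locx}) then provides $M$.

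\textbf{Regularity.} Property (a) of Definition~\ref{def:entropysoln} shows that $\rho(t)$ is a probability measure supported in $[-R(T),R(T)]\times\supp\nu$. Continuity in $t$ follows from $M\in C([0,T);L^1)$ together with \eqref{e:W1id} and \eqref{e:Wassersteinrelationship} (take $\tnu=\nu$ and the diagonal coupling). The entropy inequality with $\alpha\le 0$ and $\alpha\ge 1$ gives the balance law
\[
\partial_t M=-\partial_x A(M,y)+B[\nu,M]\,\partial_x M
\]
in distributions. Both terms on the right are measures. By the BV chain rule (Lemma~\ref{l:BVChain}), $|\partial_x A(M,y)|\le [A]_{\Lip}\,\partial_x M$, and $B$ is bounded on bounded sets. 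Therefore $P(t)\ll\rho(t)$ with density bounded by $[A]_{\Lip}+\sup|B|$, which gives (b). Since the time regularity of $\partial_t M(t)$ for each fixed $t$ is delicate, I would first obtain $P$ as a measure on space-time and disintegrate it in $t$. For atomic $\nu$ one can instead pass through the BV statement of Theorem~\ref{t:sliceexist} or the limit \eqref{e:partialtMkconv}.

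\textbf{Equations.} Differentiating the balance law in $x$ gives $\partial_t\rho=-\partial_x P$, so the continuity equation holds. For the momentum equation, I would show that
\[
\partial_t(\rho\psi)+\partial_x(\rho\psi u)=0,\qquad \psi=u+\Phi*\rho,
\]
and then undo the formal computation of the introduction, using the evenness \eqref{e:phieven} to turn $\partial_t(\rho\,\Phi*\rho)+\partial_x(\rho u\,\Phi*\rho)$ into the alignment term. To get the $\psi$-equation, I would set $Q=A(M,y)$. The chain rule writes $\partial_x A(M,y)=g\,\partial_xM$ with $g$ Borel and $|g|\le[A]_{\Lip}$, so $P=(g-B)\rho$ and $u=g-B$ $\rho$-a.e. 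Hence $\psi=g$ $\rho$-a.e., where $\Phi*\rho=B$ by construction. It then remains to check the distributional identity $\partial_tQ+u\,\partial_xQ=0$, which amounts to proving $\partial_t A(M,y)=g\,\partial_tM$, i.e. a chain rule in $(x,t)$ jointly for the BV function $M(\cdot,y,\cdot)$.

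\textbf{Main obstacle.} I expect this chain rule step to be the hardest part. At jump points of $M$ the factor $g$ must be the difference quotient $[[A]]/[[M]]$, which is consistent in $x$ and $t$ because the Vol'pert averaging is the same for both derivatives (the ``single $g$'' clause of Lemma~\ref{l:BVChain}). It also requires $M(\cdot,y,\cdot)\in BV_{\loc}$ in space-time for $\nu$-a.e. $y$. For atomic $\nu$ this is given by Theorem~\ref{t:sliceexist}. For general $\nu$ I would try to deduce it from the balance law, since $|\partial_tM|\le C\,\partial_xM$, and otherwise fall back on approximation: verify the weak formulation for the $M^k$ and pass to the limit using \eqref{e:MTkconv}, \eqref{e:partialtMkconv}, and the uniform convergence of $B[\nu_k,M^k]$.

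\textbf{Initial data.} The convergence $\rho(t)\stackrel{*}{\rightharpoonup}\rho^0$ follows from \eqref{e:MIC}. For $\rho u(t)\stackrel{*}{\rightharpoonup}\rho^0u^0$, I would test $P(t)=-\partial_tM$ against $\zeta$ and write it as $\int[\partial_x A(M,y)-B\partial_xM]\zeta$. This expression is continuous as $t\to0^+$: integrate by parts in $x$ and use \eqref{e:MIC} for both $A(M,y)$ and $B$. The limit is $\int[\partial_xA(M^0,y)-(\Phi*\rho^0)\rho^0]\zeta$, and by \eqref{e:geninvid} we have $\partial_xA(M^0,y)=\psi^0\rho^0_y$, so the limit equals $\rho^0u^0$ tested against $\zeta$.
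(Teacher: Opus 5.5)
Your proposal is correct and follows essentially the same route as the paper. The continuity equation comes straight from the definitions of $\rho$ and $P$, and the single-$g$ clause of Lemma~\ref{l:BVChain}, applied jointly in $(x,t)$, gives $\partial_t(\rho\psi)+\partial_x(\rho u\psi)=0$. The distributional balance law then identifies $\psi=u+\Phi*\rho$, and the initial data are recovered from \eqref{e:MIC} together with \eqref{e:geninvid}. Your additional checks are correct, and the paper leaves them implicit: admissibility of the triple, time continuity of $\rho$, the bound $|P(t)|\le([A]_{\Lip}+\sup|B|)\rho(t)$, and space-time $BV_{\loc}$ regularity of $M(\cdot,y,\cdot)$ via $|\partial_t M|\le C\,\partial_x M$.
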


\begin{proof} 
	The continuity equation $\partial_t \rho + \partial_x (\rho u) = 0$ follows simply from the definitions of $P$ and $u$:
	\[
	\partial_t \rho + \partial_x (\rho u) = \partial_t \partial_x M \nu + \partial_x P = 0.
	\]
	The BV chain rule (c.f. Lemma \ref{l:BVChain}) guarantees that we can define $\psi(t)\in L^\infty(\rho(t))$ via 
	\begin{equation}
		\partial_x (A(M,\cdot)) =  \psi \partial_x M,
		\qquad 
		\partial_t (A(M,\cdot)) = \psi\partial_t M.
	\end{equation}
	Therefore, 
	\[
	\partial_x(A(M,y)) \nu = \rho \psi,
	\qquad \partial_t (A(M,y)) \nu = -P\psi = -\rho u \psi.
	\]
	This immediately implies that $\partial_t(\rho \psi) + \partial_x (\rho u \psi) = 0$ (which is \eqref{e:EAunipsi}$_2$).  In order to upgrade to the original formulation  \eqref{e:EAuni}$_2$, we need to establish the formula $\psi = u + \Phi*\rho$ directly from the procedure outlined in the statement of Theorem \ref{t:entropicselection}.  The distributional form of the scalar balance law provides the link:
	\[
	\rho u = P = -\partial_t M \nu = \partial_x (A(M,y))\nu - (\Phi*\nu\partial_x M) \partial_x M \nu = (\psi - \Phi*\rho) \rho.
	\]
	Substituting this relationship into \eqref{e:EAunipsi} immediately yields \eqref{e:EAuni}$_2$.
	
	We now consider the initial conditions and take $t\to 0+$.  The convergence $\rho(t)\stackrel{*}{\rightharpoonup} \rho^0$ follows from the corresponding convergence of $M(t)$ to $M^0$.  Continuity of $\Phi*\rho$ then implies that $(\Phi*\rho(t))\rho(t)\stackrel{*}{\rightharpoonup} (\Phi*\rho^0)\rho^0$. Next, we have
	\[
	\rho \psi(t) = \partial_x (A(M(t),\cdot))\nu \stackrel{*}{\rightharpoonup} \partial_x (A(M^0, \cdot)) \nu = \partial_x \bigg( \int_{(-\infty, x]} \psi^0 \partial_x M^0 \bigg)\nu = \rho^0 \psi^0,
	\]
	where the penultimate identity follows from the definition \eqref{e:defA} of $A$ and the identity \eqref{e:geninvid}.  Combining the two convergences $\rho \psi(t)\stackrel{*}{\rightharpoonup} \rho^0\psi^0$ and $(\Phi*\rho)\rho(t)\stackrel{*}{\rightharpoonup} (\Phi*\rho^0)\rho^0$ with the relationships $\psi = u + \Phi*\rho$ and $\psi^0 = u^0 + \Phi*\rho^0$ gives us the desired convergence $\rho u(t)\stackrel{*}{\rightharpoonup} \rho^0 u^0$.
\end{proof}

\begin{PROP}
	\label{p:entropicdiscrete}
	Assume $\phi$ satisfies \eqref{e:phieven} and \eqref{e:phiL1locx}.  Assume $\rho^0\in \cP_c(\mathbb{R}^d)$ is atomic: $\rho^0 = \sum_{i=1}^N m_i \delta(\cdot - \bm{x}_i^0)$ and fix $u^0\in  L^\infty(\rho^0)$ (real-valued), so that $\rho^0 u^0$ is of the form $\rho^0 u^0 = \sum_{i=1}^N m_i v_i^0 \delta(\cdot - \bm{x}_i^0)$.  Let $(m_i, \bm{x}_i(t), v_i(t))_{i=1}^N$ denote the unidirectional sticky particle Cucker--Smale dynamics associated to $(m_i, \bm{x}_i^0, v_i^0)_{i=1}^N$.  Then the solution $(\rho, u)$ of \eqref{e:EAuni} generated by the entropic selection principle in Theorem \ref{t:entropicselection} is given by 
	\begin{equation} 
		\label{e:EASPCS}
		\rho(t) = \sum_{i=1}^N m_i \delta(\cdot - \bm{x}_i(t)),
		\qquad 
		\rho u(t) = \sum_{i=1}^N m_i v_i(t) \delta(\cdot - \bm{x}_i(t)),
		\qquad t\ge 0.
	\end{equation}
\end{PROP}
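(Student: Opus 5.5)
We will prove Proposition \ref{p:entropicdiscrete} by running the entropic selection procedure of Theorem \ref{t:entropicselection} on the atomic data and showing that it returns the SPCS dynamics. The main tools are Theorem \ref{t:SPCSentropy} and the uniqueness part of Theorem \ref{th:uniqueness-main}.

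First we group the atoms by horizontal slice. Let $y_1,\ldots,y_L$ be the distinct values of $p_2(\bm{x}_i^0)$, and set $\mu_\ell=\sum_{i:p_2(\bm{x}_i^0)=y_\ell} m_i$. Relabel the particles on slice $\ell$ as $(x^0_{i,\ell},y_\ell)$, $i=1,\ldots,I_\ell$, with $x^0_{1,\ell}\le\cdots\le x^0_{I_\ell,\ell}$. Give each relabelled particle the conditional mass $m_{i,\ell}=m_i/\mu_\ell$ and velocity $v^0_{i,\ell}$. Coincident atoms can be merged with averaged momentum, or kept distinct; rule (b) of SPCS handles either choice consistently.

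With this labelling, step 1 of Theorem \ref{t:entropicselection} produces the following data:
\begin{itemize}
\item $\nu=\sum_\ell\mu_\ell\delta_{y_\ell}$, which is \eqref{e:nuSPCS};
\item $M^0(x,y_\ell)=\sum_i m_{i,\ell}H(x-x^0_{i,\ell})$, which is \eqref{e:M0SPCS};
\item the generalized inverse $X^0(\cdot,y_\ell)$, which equals $x^0_{i,\ell}$ on $(\theta_{i-1,\ell},\theta_{i,\ell}]$.
\end{itemize}
By \eqref{e:defA}, $\partial_mA(m,y_\ell)=\psi^0(x^0_{i,\ell},y_\ell)$ on $(\theta_{i-1,\ell},\theta_{i,\ell})$. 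This value is $v^0_{i,\ell}+\sum_{p,j}\mu_pm_{j,p}\Phi(x^0_{i,\ell}-x^0_{j,p},y_\ell-y_p)=\psi^0_{i,\ell}$. Hence $A$ is exactly the piecewise linear flux \eqref{e:ASPCS}.

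One subtlety arises when atoms coincide in position but carry different velocities. Then $u^0$ is not a function on $\supp\rho^0$, so the data must be read as merged (one averaged velocity per location), and the two descriptions must be checked to agree. This compatibility check is the only place where care is needed, and even there the fix is mechanical. The case of distinct initial positions is the main one.

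Next, Theorem \ref{t:SPCSentropy} shows that $M(x,y_\ell,t)=\sum_im_{i,\ell}H(x-x_{i,\ell}(t))$ is an entropy solution for this admissible triple. By uniqueness (Theorem \ref{th:uniqueness-main}, whose $\nu=\widetilde\nu$ case needs only \eqref{e:phiL1locx}), it is the solution produced in step 2. Step 3 then gives
\[
\rho(t)=\partial_xM\,\nu=\sum_{\ell,i}\mu_\ell m_{i,\ell}\,\delta_{(x_{i,\ell}(t),y_\ell)},
\]
which is the first identity in \eqref{e:EASPCS}.

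For the momentum, we compute $-\partial_tM$ distributionally on each slice. The trajectories $x_{i,\ell}$ are continuous and piecewise $C^1$. Away from the finitely many collision times this gives $-\partial_tM(\cdot,y_\ell,t)=\sum_im_{i,\ell}\dot x_{i,\ell}(t)\delta_{x_{i,\ell}(t)}$, with $\dot x_{i,\ell}=v_{i,\ell}$ under the right-continuous convention. Particles in a common cluster share one position and, by \eqref{e:collisioncons2}, one velocity, so the formula is unambiguous. It holds for every $t$ if $\partial_tM(t)$ is understood as the right time-derivative, or equivalently as the weak-$*$ limit used in Theorem \ref{thm:existence-main}. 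Thus $P(t)=\sum m_iv_i(t)\delta_{\bm{x}_i(t)}$, and therefore $u(t)=dP/d\rho$ equals $v_i(t)$ at $\bm{x}_i(t)$, which gives the second identity in \eqref{e:EASPCS}.
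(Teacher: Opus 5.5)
Your proposal is correct and follows essentially the route the paper intends (it defers to the analogous Proposition 6.4 of \cite{LeslieTan2023}): identify the triple produced in Step 1 of Theorem \ref{t:entropicselection} with the SPCS triple \eqref{e:nuSPCS}--\eqref{e:ASPCS}, invoke Theorem \ref{t:SPCSentropy} (which already contains uniqueness), and read off $\rho$ and $P$ from the explicit $M$. Your treatment of the two delicate points is sound: merging coincident atoms, and handling collision times via right derivatives, which agrees with defining $-\partial_t M(t)=\partial_x(A(M(t),y))-B[\nu,M]\partial_x M(t)$ and using \eqref{e:collisionconspsi}.
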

We omit the proof of this proposition, which is analogous to that of Proposition 6.4 in \cite{LeslieTan2023}. We refer to solutions of \eqref{e:EAuni} of the form \eqref{e:EASPCS} as `sticky particle' weak solutions of the unidirectional Euler Alignment system.

\subsection{Approximation by atomic solutions}

\label{ss:approximation}

We finally spell out explicitly the sense in which the SPCS weak solutions generated in Proposition \ref{p:entropicdiscrete} can be used to approximate solutions arising from general unidirectional initial data $(\rho^0, u^0)$.  Theorem~\ref{t:SPCSconvgen} provides weak-$*$ convergence of the density and momentum profiles for arbitrary weak solutions.  We explore additional assumptions in the next subsection that imply a rate of convergence of the density profile.  

\begin{THEOREM}
	\label{t:SPCSconvgen}
	Assume $\phi$ satisfies \eqref{e:phieven} and \eqref{e:phiLip2}.
	Suppose $\rho^0\in \cP_c(\mathbb{R}^{d})$ and $u^0\in L^\infty(\rho^0)$, and let $(\rho, u)$ be the solution generated from $(\rho^0, u^0)$ by the entropic selection principle in Theorem \ref{t:entropicselection}.  There exists a sequence $(\rho_n, u_n)_{n=1}^\infty$ of sticky particle weak solutions such that $\rho_n(t)\stackrel{*}{\rightharpoonup}\rho(t)$ and $\rho_n u_n(t)\stackrel{*}{\rightharpoonup}\rho u(t)$ for all $t\ge 0$. 
\end{THEOREM}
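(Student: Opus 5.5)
Our plan is a two-level diagonal argument at the level of the scalar balance law, followed by passage to the Euler Alignment variables via Theorem \ref{t:entropicselection} and Proposition \ref{p:entropicdiscrete}.

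\textbf{Step 1: the approximating triples.} Let $(M^0,A,\nu)$ be the admissible triple built from $(\rho^0,u^0)$ in Step 1 of Theorem \ref{t:entropicselection}, and $M$ its entropy solution. For each $k$, form $(M^{0,k},A^k,\nu_k)$ as in \eqref{e:nuk}--\eqref{eq:Ak-def-existence}, with entropy solution $M^k$. Theorem \ref{thm:existence-main} gives \eqref{e:MTkconv} and \eqref{e:partialtMkconv}. For fixed $k$, Theorem \ref{t:sliceexist} supplies SPCS entropy solutions $M^k_N$ on the finitely many slices with $(M^k_N-M^k)(\cdot,y_{k,\ell},\cdot)\to0$ in $C([0,T);L^1)$ and $\partial_t M^k_N\stackrel{*}{\rightharpoonup}\partial_t M^k$ slice by slice. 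We will check that these SPCS data actually come from atomic pairs $(\rho^0_n,u^0_n)$, so that the entropic selection principle applied to them reproduces the same triple; this is what Proposition \ref{p:entropicdiscrete} requires.

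\textbf{Step 2: diagonal choice of $N(k)$.} Since the weak-$*$ topology on uniformly compactly supported measures is metrizable, fix a metric, for instance the bounded Lipschitz distance. For each $k$ choose $N(k)$ so that two conditions hold for all $t\in[0,T)$:
\begin{itemize}
\item $\sum_\ell\nu(Q_{k,\ell})\|M^k_{N(k)}-M^k\|_{L^1}(t)<1/k$;
\item the bounded-Lipschitz distance between $\nu_k\partial_tM^k_{N(k)}(t)$ and $\nu_k\partial_tM^k(t)$ is below $1/k$.
\end{itemize}
The second bound is needed uniformly in $t$, not only pointwise. For this we plan to use the identity
\[
-\partial_t M\,\nu=\partial_x A(M,y)\,\nu-B\,\partial_xM\,\nu .
\]
Testing it against bounded Lipschitz functions, integrating by parts in $x$, and using $L^\infty$ bounds on $A$ and $B$ should bound these distances by $L^1$ differences of $M$ and $A$ uniformly in $t$. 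A countable dense set of times plus a diagonal extraction is an acceptable fallback. Set $\rho_n=\partial_xM^n_{N(n)}\nu_n$ and $\rho_nu_n=-\partial_tM^n_{N(n)}\nu_n$; by Proposition \ref{p:entropicdiscrete} these are sticky particle weak solutions.

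\textbf{Step 3: convergence.} For the densities, test with $f$ bounded and Lipschitz and integrate by parts in $x$:
\[
\int f\,\dd\rho_n-\int f\,\dd\rho=-\int\!\!\int\partial_xf\,\big(M^n_{N(n)}(x,r_n(y))-M(x,y)\big)\,\dx\,\nu(\dy)+O(2^{-n}[f]_{\Lip}).
\]
The error term comes from replacing $f(x,y_{n,\ell})$ by $f(x,y)$. The right side tends to zero by \eqref{e:MTkconv}, the choice of $N(n)$, and the uniform support bound $R(T)$. For the momenta, combine the second bound in Step 2 with \eqref{e:partialtMkconv}. Identify $-\partial_tM\,\nu=\rho u$ through Step 3 of Theorem \ref{t:entropicselection}.

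\textbf{Main obstacle.} The momentum convergence is the delicate part, because \eqref{e:partialtMkconv} and \eqref{e:ptMNtoptMdisc} are only pointwise in $t$ and hold for fixed $k$. The uniform-in-time estimate from the balance-law identity in Step 2 is the key point to verify. In particular, $B[\nu_k,M^k_N]$ must converge uniformly, which should follow as in the proof of Theorem \ref{thm:existence-main} from $\|\phi\|_{L^\infty}$ times the $L^1$ differences. To extend from $t<T$ to all $t\ge0$, repeat the construction with $T=1,2,\dots$ and diagonalize once more.
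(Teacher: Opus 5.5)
Your proposal is correct. Its architecture is the paper's: transverse discretization via Theorem \ref{thm:existence-main}, SPCS approximation of each $M^k$ via Theorem \ref{t:sliceexist}, identification with sticky particle weak solutions via Proposition \ref{p:entropicdiscrete}, and then a diagonal choice. Your integration-by-parts bound for the densities is simply a direct proof of what the paper gets from \eqref{e:Wassersteinrelationship} and \eqref{e:nukconv}.

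The real difference is in the diagonal step. The paper does a soft extraction. It fixes a countable dense family $(\zeta_i)$ in $C(\overline{B(0,R)})$, extracts nested subsequences along which the pairings converge at rational times, and then appeals to continuity in time.

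You instead choose $N(k)$ quantitatively from an estimate that is uniform in $t$. You test the fixed-time identity $-\partial_t M\,\nu=\partial_x(A(M,y))\,\nu-B[\nu,M]\,\partial_xM\,\nu$ against a Lipschitz $f\in C_c$; this is the same reading the paper uses to prove \eqref{e:partialtMkconv}. You then integrate by parts and use $|B[\nu_k,M^k_N]-B[\nu_k,M^k]|\le\|\phi\|_{L^\infty}\|M^k_N-M^k\|_{L^1(\dx\otimes\nu_k)}$. This gives
\[
\Big|\int f\,\nu_k\,\partial_t\big(M^k_N-M^k\big)(t)\Big|\le C_f\Big(\|(M^k_N-M^k)(t)\|_{L^1(\dx\otimes\nu_k)}+\max_\ell\|A^k_N(\cdot,y_{k,\ell})-A^k(\cdot,y_{k,\ell})\|_{L^\infty([0,1])}\Big),
\]
where $C_f$ depends only on $\|f\|_\infty$, $[f]_{\Lip}$, $R(T)$, $\|\phi\|_{L^\infty}$ and $\sup_y[A(\cdot,y)]_{\Lip}$.

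What this buys you:
\begin{itemize}
\item You get convergence at every time directly, with an explicit $N(k)$ and no subsequences.
\item You avoid the passage from rational to arbitrary times. In the paper's argument that passage tacitly needs equicontinuity in $n$ of $t\mapsto\rho_n u_n(t)$, not merely continuity of each member of the sequence.
\end{itemize}

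What it costs is one fact that is not in the statement of Theorem \ref{t:sliceexist}: uniform convergence $A^k_N\to A^k$ on each slice. You must take this from the construction. For the natural choice, $A^k_N$ is the piecewise linear interpolant of $A^k$ at the breakpoints $\theta_{i,\ell}$. Then the error is $O(\max_i m_{i,\ell})$, and in fact $A^k_N(M^k_N,\cdot)=A^k(M^k_N,\cdot)$ exactly.

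Two smaller points:
\begin{itemize}
\item For the signed momentum measures, convergence against Lipschitz test functions (or your bounded-Lipschitz metrizability claim) yields weak-$*$ convergence only together with a uniform total-variation bound. You have that bound from the maximum principle.
\item Your final diagonalization over $T=1,2,\dots$ correctly supplies the ``for all $t\ge0$'' in the statement. The paper's proof, which works on $[0,T)$, leaves this implicit.
\end{itemize}
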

\begin{proof}
	Let $(M^0, A, \nu)$ be the admissible triple from Step 1 of the procedure in Theorem \ref{t:entropicselection}.  For each $k\in \mathbb{N}$, define $(M^{0,k}, A^k, \nu_k)$ as in \eqref{e:nuk}, \eqref{eq:M0k-def-existence}, \eqref{eq:Ak-def-existence}.  Define $\rho^{0,k}$ and $u^{0,k}$ via 
	\[
	\rho^{0,k}(\dx, \dy) = \partial_1 M^{0,k}(\dx, y)\nu_k(\dy),
	\qquad \qquad 
	\rho^{0,k} u^{0,k} =  \partial_x(A^k(M^{0,k}, \cdot))\nu_k - (\Phi*\rho^{0,k})\rho^{0,k}.
	\]
	Let $M^k$ denote the entropy solution of \eqref{e:ScalarBalance} generated from $(M^{0,k}, A^k, \nu_k)$ by Theorem \ref{t:existence}, and let $(\rho^k, u^k)$ denote the solution generated from $(\rho^{0,k}, u^{0,k})$ from Theorem \ref{t:entropicselection}.  Combining \eqref{e:Wassersteinrelationship} and \eqref{e:nukconv} yields 
	\begin{equation}
		\label{e:W1vsL1M}
		W_1(\rho^k(t), \rho(t)) \le C2^{-k} + \int_{\mathbb{R}^{d-1}} \|M^k(\cdot, r_k(y), t) - M(\cdot, y, t)\|_{L^1(\mathbb{R})} \,\nu(\dy), 
	\end{equation} 
	so that by \eqref{e:MTkconv}, we have $\rho^k(t)\stackrel{*}{\rightharpoonup} \rho(t)$.  The fact that $\rho^k u^k(t)\stackrel{*}{\rightharpoonup} \rho u(t)$ is exactly \eqref{e:partialtMkconv}.  
	
	Similarly, Theorem \ref{t:sliceexist} provides for each $k$ the existence of a sequence $(\rho^k_N, u^k_N)_{N=1}^\infty$ of sticky particle weak solutions such that $(\rho^k_N(t))_{N=1}^\infty$ and $(\rho^k_N u^k_N(t))_{N=1}^\infty$ converge weak-$*$ to $\rho^k(t)$ and $\rho^k u^k(t)$, respectively, for each $t\in [0,T)$.
	
	Choose $R>0$ large enough so that $\supp \rho^0$ is contained in the open ball $B(0,R)$ of radius $R$ in $\mathbb{R}^d$, centered at the origin.  Let $(\zeta_i)_{i=1}^\infty$ be an enumeration of a countable dense subset of the space of continuous functions on $\overline{B(0,R)}$ under the uniform norm.  We can find a sequence of integers $(N_{k_1})_{k_1=1}^\infty$ such that $\int \zeta_1 \rho_{N_{k_1}}^{k_1}(t)\to \int \zeta_1 \rho(t)$ and $\int \zeta_1 \rho_{N_{k_1}}^{k_1}u_{N_{k_1}}^{k_1}(t)\to \int \zeta_1 \rho u(t)$, respectively, as $k_1\to \infty$, and for all rational $t\in [0,T)$.  Having chosen $(N_{k_{j-1}})_{k_{j-1}=1}^\infty$, choose a further subsequence $(N_{k_j})_{k_j=1}^\infty$ such that  $\int \zeta_j \rho_{N_{k_j}}^{k_j}(t)\to \int \zeta_j \rho(t)$ and $\int \zeta_j \rho_{N_{k_j}}^{k_j}u_{N_{k_j}}^{k_j}(t)\to \int \zeta_j \rho u(t)$, for all rational $t\in [0,T)$.  Relabeling $(\rho_n, u_n) := (\rho^{k_n}_{N_{k_n}}, u^{k_n}_{N_{k_n}})$, a density argument shows that we have the required convergence against all $\zeta\in C(\overline{B(0,R)})$ (and thus all $\zeta\in C_c(\mathbb{R}^d)$) for rational times.  Finally, continuity in time of $\rho$, $\rho u$, and each $\rho_n$ and $\rho_n u_n$, allows us to extend the convergence from rational $t$ to all $t\in [0,T)$.
\end{proof}

\subsection{Rates of Convergence}
\label{ss:rate2}

Under additional assumptions on our initial conditions $(\rho^0, u^0)$, we can guarantee a rate of convergence of a sequence of sticky particle weak solution density profiles $(\rho^k_{N(k)})_{k=1}^\infty$ (with $N(k)$ explicitly specified) to the density profile $\rho$ of the weak solution generated by Theorem \ref{t:entropicselection}.  We formulate two results toward this end; let us provide some context before presenting their full statements and proofs.  

The simpler of our two statements is Corollary \ref{c:SPCSratestrong}; its hypotheses are designed so that Corollary \ref{c:SPCSrate} can be applied, and its proof is essentially a matter of translating between the scalar balance law formulation and the density/momentum formulation.  However, the assumptions on $(\rho^0, u^0)$ needed to guarantee H\"older regularity of $\partial_m A(m,y) = \psi^0(X^0(m,y),y)$ in $m$ (c.f. \eqref{e:AHolderm}) and $y$ (c.f. \eqref{e:partialmAHoldery}) are rather stringent, mostly because $m\mapsto X^0(m,y)$ contains jumps across intervals of vacuum, no matter how nice $\rho^0$ and $u^0$ are otherwise.  

The more subtle (and arguably more interesting) of these two results is Theorem \ref{t:SPCSconvrateweak}, which is valid under much milder hypotheses than Corollary \ref{c:SPCSratestrong}.  Its proof leverages Corollary \ref{c:MktoMrate}(a) to discretize in $y$, but the subsequent horizontal discretization avoids making additional assumptions on the flux by leveraging information directly from (the discrete-in-$y$ versions of) $\rho^0$ and $u^0$ via Lemma \ref{l:ICspecialrate}.  Since the first step of the discretization is based on the estimate \eqref{eq:main-unidir-optimal} rather than \eqref{eq:stab-pi}, the end result suffers from a bottleneck in the convergence rate related to the square root on the flux term in \eqref{eq:main-unidir-optimal}; however, its more flexible hypotheses give it a distinct advantage over Corollary \ref{c:SPCSratestrong} in terms of applicability.  Note in particular that Corollary~\ref{c:SPCSratestrong} requires a uniform lower bound on the density within its support, while no such condition is needed in Theorem \ref{t:SPCSconvrateweak}.

\begin{COROL}
	\label{c:SPCSratestrong}
	Assume $\phi$ satisfies \eqref{e:phieven} and \eqref{e:phiLip2}.
	Assume that $\rho^0\in \cP_c(\mathbb{R}^d)$ and $u^0\in L^\infty(\rho^0)$.  Assume that $u^0$ has a representative that satisfies a H\"older estimate of the form
	\begin{equation}
		\label{e:u0Holder}
		|u^0(x, y) - u^0(\widetilde{x}, \widetilde{y})|\le C\big(|x - \widetilde{x}|^\delta + |y - \widetilde{y}|^\gamma\big).    
	\end{equation} Assume additionally that $\rho^0$ can be identified with a Lebesgue integrable function, which we continue to denote by $\rho^0$, i.e., $\rho^0\in L^1(\mathbb{R}^d)$, and denote $\nu(y) = \int_{\mathbb{R}} \rho^0(x,y)\,\dx$.
	Assume that $\supp \rho^0$ is of the form 
	\[
	\supp \rho^0 = \{(x,y)\in \mathbb{R}\times \mathbb{R}^{d-1}: a(y)\le x\le b(y)\}, 
	\qquad a,b\in C^1(\supp \nu),
	\qquad a(y)<b(y),
	\]
	and that $\rho^0\ge c>0$ on $\supp \rho^0$.  Finally, assume that $\rho^0$ satisfies the following H\"older estimate:
	\begin{equation} 
		\label{e:rho0Holder}
		|\rho^0(x,y) - \rho^0(x,\widetilde{y})| \le C|y - \widetilde{y}|^\beta,
		\qquad (x,y),(x,\widetilde{y})\in \supp \rho^0.
	\end{equation} 
	Then there exists a sequence of explicitly constructed sticky particle solutions $(\rho_k, u_k)_{k=1}^\infty$, consisting of $O(2^{k(d-1 + \beta)})$ agents, such that 
	\begin{equation}
		\sup_{t\in [0,T)} W_1(\rho_k(t), \rho(t)) \le C(1+T) 2^{-\beta \delta k}. 
	\end{equation}
\end{COROL}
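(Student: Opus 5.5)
The plan is to reduce the statement to Corollary \ref{c:SPCSrate}(b). I would apply it with Hölder-in-$m$ exponent $\epsilon=\delta$ and with the $y$-exponent $\beta$ there replaced by $\beta\delta$. Then I convert the resulting $L^1$ bound on $M$ into a $W_1$ bound on $\rho$.

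\textbf{Counting and conversion.} With $\epsilon=\delta$ and $y$-exponent $\beta\delta$, Corollary \ref{c:SPCSrate}(b) chooses $N(k)\sim 2^{\beta\delta k/\delta}=2^{\beta k}$ agents per slice. There are $O(2^{k(d-1)})$ slices, which gives the stated $O(2^{k(d-1+\beta)})$ agents. It also gives the rate $C(1+T)2^{-\beta\delta k}$ for $\sup_t\int\|M^k_{N(k)}(\cdot,r_k(y),t)-M(\cdot,y,t)\|_{L^1}\,\nu(\dy)$. Set $\rho_k(\dx,\dy,t)=\partial_x M^k_{N(k)}(\dx,y,t)\,\nu_k(\dy)$; by Proposition \ref{p:entropicdiscrete} this is a sticky particle solution. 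Then \eqref{e:Wassersteinrelationship} with the coupling $(\mathrm{id},r_k)_\sharp\nu$, together with \eqref{e:nukconv}, gives
\[
W_1(\rho_k(t),\rho(t))\le C2^{-k}+C(1+T)2^{-\beta\delta k}.
\]
This is the claim, since $\beta\delta\le 1$. So the real work is verifying the hypotheses \eqref{e:M0Holdery}, \eqref{e:AHolderm} and \eqref{e:partialmAHoldery} with the exponents above.

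\textbf{Regularity of $\nu$, $M^0$ and $X^0$.} First I control $\nu$. We have $\nu(y)=\int_{a(y)}^{b(y)}\rho^0(x,y)\,\dx\ge c\,(b-a)(y)$, and $b-a$ is continuous and positive on the compact set $\supp\nu$. Hence $\nu$ is bounded above and below there, and it is Hölder of order $\beta$ by \eqref{e:rho0Holder} and $a,b\in C^1$.

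Next I show $M^0$ is Hölder in $y$. Write $M^0(x,y)=\nu(y)^{-1}\int_{a(y)}^{x}\rho^0(\xi,y)\,\dd\xi$ on $[a(y),b(y)]$. The integrand difference is controlled by \eqref{e:rho0Holder} on the common part of the supports, and the mismatch intervals near the endpoints have length $O(|y-\widetilde y|)$. This gives $\sup_x|M^0(x,y)-M^0(x,\widetilde y)|\le C|y-\widetilde y|^\beta$, hence \eqref{e:M0Holdery}; on the bounded set $\supp\nu$ this also yields the weaker order $\beta\delta$.

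Then I turn to the inverse $X^0$. Since $\partial_x M^0\ge c/\sup\nu$ on $[a(y),b(y)]$, the inverse $X^0(\cdot,y)$ is Lipschitz in $m$, uniformly in $y$. Inverting the uniform-in-$x$ estimate on $M^0$ using this slope lower bound gives $|X^0(m,y)-X^0(m,\widetilde y)|\le C|y-\widetilde y|^\beta$.

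\textbf{Regularity of $\partial_m A$.} We have $\partial_m A(m,y)=u^0(X^0(m,y),y)+(\Phi*\rho^0)(X^0(m,y),y)$. The function $\Phi*\rho^0$ is Lipschitz in both variables, by boundedness of $\phi$ and \eqref{e:phiLip2}. For the $u^0$ term I use \eqref{e:u0Holder}.
\begin{itemize}
\item In $m$, the Lipschitz bound on $X^0$ gives $\partial_m A$ Hölder of order $\delta$, which is \eqref{e:AHolderm} with $\epsilon=\delta$.
\item In $y$, we get $|\partial_mA(m,y)-\partial_mA(m,\widetilde y)|\lesssim |y-\widetilde y|^{\beta\delta}+|y-\widetilde y|^{\gamma}+|y-\widetilde y|^{\beta}$. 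This is \eqref{e:partialmAHoldery} with exponent $\min\{\beta\delta,\gamma\}$.
\end{itemize}

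\textbf{Main obstacle.} The delicate step is this last $y$-estimate. First, one must handle the endpoints $a(y),b(y)$ and the restriction of \eqref{e:rho0Holder} to the support. Second, the transverse exponent comes out as $\min\{\beta\delta,\gamma\}$ rather than $\beta\delta$. I would first check whether the stated rate implicitly assumes $\gamma\ge\beta\delta$; if not, I would carry the minimum through the argument, with the corresponding change in the choice of $N(k)$.
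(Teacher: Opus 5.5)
Your proposal is correct and follows the paper's proof step for step: Hölder control of $\nu$, $M^0$ and $X^0$, then of $\partial_m A$ in $m$ and $y$, then Corollary~\ref{c:SPCSrate}(b) with $\epsilon=\delta$ and transverse exponent $\beta\delta$, and you additionally spell out the agent count and the passage to $W_1$ via \eqref{e:Wassersteinrelationship} with the coupling $(\mathrm{id},r_k)_\sharp\nu$, both of which the paper leaves implicit. Your caveat about $\gamma$ is justified, since the paper's proof writes the transverse term of $\psi^0$ as $|y-\widetilde{y}|^\beta$ where \eqref{e:u0Holder} only gives $|y-\widetilde{y}|^\gamma$, so the stated rate $2^{-\beta\delta k}$ with $O(2^{k(d-1+\beta)})$ agents tacitly requires $\gamma\ge\beta\delta$, and in general the argument yields $2^{-\min\{\beta\delta,\gamma\}k}$ with $N(k)\sim 2^{\min\{\beta\delta,\gamma\}k/\delta}$ (note also that both you and the paper tacitly assume $\rho^0\in L^\infty$ when bounding the endpoint-mismatch contributions to $\nu$ and $M^0$ by $C|y-\widetilde{y}|$).
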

\begin{proof} 
	The stated assumptions guarantee that 
	\begin{equation}
		|\nu(y) - \nu(\widetilde{y})|\le C|y-\widetilde{y}|^\beta,
		\quad y,\widetilde{y}\in \supp \nu,
		\qquad \nu\ge c\min_{y\in \supp \nu} (b(y) - a(y))>0.
	\end{equation}
	This and the assumption \eqref{e:rho0Holder} imply that 
	\begin{equation}
		|M^0(x,y) - M^0(x,\widetilde{y})|\le C|y - \widetilde{y}|^\beta,
		\qquad (x,y), (x,\widetilde{y})\in \supp \rho^0.
	\end{equation}
	This implies in particular that \eqref{e:M0Holdery} holds.  Next, the regularity of $M^0$ and the lower bound on $\rho^0$ imply that 
	\[
	|X^0(m,y) - X^0(\widetilde{m}, \widetilde{y})|\le C\big( |m- \widetilde{m}| + |y - \widetilde{y}|^\beta\big),
	\qquad y,\widetilde{y}\in \supp \nu.
	\]
	The assumption \eqref{e:u0Holder} guarantees that $\psi^0 = u^0 + \Phi*\rho^0$ also satisfies a H\"older estimate of the same form that $u^0$ does.  Therefore, 
	\begin{align*}
		|\partial_m A(m,y) - \partial_m A(\widetilde{m}, \widetilde{y})|
		\le C\big(|X^0(m,y) - X^0(\widetilde{m}, \widetilde{y})|^\delta + |y - \widetilde{y}|^\beta\big) \le C \big( |m-\widetilde{m}|^\delta + |y - \widetilde{y}|^{\beta\delta}\big),
		\qquad y,\widetilde{y} \in \supp \nu.
	\end{align*}
	Corollary \ref{c:SPCSrate} is thus applicable and allows us to finish the proof.
\end{proof}

\begin{THEOREM}
	\label{t:SPCSconvrateweak}
	Assume $\phi$ satisfies \eqref{e:phieven} and \eqref{e:phiLip2}.
	Assume that $\rho^0\in \cP_c(\mathbb{R}^d)$; define $\nu = (p_2)_{\sharp} \rho^0$ and $\rho^0_y$ via $\rho^0(\dx, \dy) = \rho^0_y(\dx) \nu(\dy)$ as usual.  Assume that $u^0$ satisfies \eqref{e:u0Holder} and that $y\mapsto \rho^0_y$ satisfies the following H\"older-type estimate:
	\begin{equation}
		\label{e:rho0Holdery}
		W_1(\rho^0_y, \rho^0_{\widetilde{y}}) \le C |y - \widetilde{y}|^{\beta},
		\qquad y,\widetilde{y}\in \supp \nu.
	\end{equation}
	Then there exists a sequence of sticky particle weak solutions $(\rho_k, u_k)_{k=1}^\infty$, consisting of $O(2^{k( (d-1) + \frac{1}{2} \min\{\beta, \frac{\gamma}{\delta}\})})$ agents, such that
	\begin{equation}
		\sup_{t\in [0,T)} W_1(\rho(t),  \rho_k(t)) \le C 2^{-\frac{k}{2} \min\{\beta \delta, \gamma \}}.
	\end{equation}
\end{THEOREM}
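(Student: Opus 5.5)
Our plan is a two-level discretization, following the discussion before Corollary \ref{c:SPCSratestrong}. Step 1 discretizes in $y$ via Corollary \ref{c:MktoMrate}(a). Step 2 discretizes in $x$ on each slice directly from the data, via Lemma \ref{l:ICspecialrate}, rather than through Theorem \ref{t:sliceexist}. The two errors are then combined through \eqref{e:Wassersteinrelationship}.

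\textbf{Step 1 (transverse discretization).} Let $(M^0,A,\nu)$ be the admissible triple from Theorem \ref{t:entropicselection}, and define $(M^{0,k},A^k,\nu_k)$ by \eqref{e:nuk}--\eqref{eq:Ak-def-existence}. By \eqref{e:W1id}, assumption \eqref{e:rho0Holdery} is exactly \eqref{e:M0Holdery}. For \eqref{e:AHoldery} we would not use the pointwise Hölder bound on $\partial_m A$, since $X^0$ jumps across vacuum. Instead we estimate
\[
|A(m,y)-A(m,\widetilde y)|\le \int_0^m|\psi^0(X^0(\widetilde m,y),y)-\psi^0(X^0(\widetilde m,\widetilde y),\widetilde y)|\,\dd\widetilde m .
\]
Using \eqref{e:u0Holder} for $u^0$, and Lipschitz bounds on $\Phi*\rho^0$ (which follow from \eqref{e:phiLip2} and $W_1(\nu,\nu)$-type estimates), the integrand is at most $C(|X^0(\widetilde m,y)-X^0(\widetilde m,\widetilde y)|^\delta+|y-\widetilde y|^{\gamma})$. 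By Jensen and \eqref{e:W1id},
\[
\int_0^1 |X^0(\widetilde m,y)-X^0(\widetilde m,\widetilde y)|^\delta\,\dd\widetilde m\le W_1(\rho^0_y,\rho^0_{\widetilde y})^\delta\le C|y-\widetilde y|^{\beta\delta}.
\]
So \eqref{e:AHoldery} holds with exponent $\gamma'=\min\{\beta\delta,\gamma\}$. Corollary \ref{c:MktoMrate}(a) then gives an error of order $2^{-\min\{\beta,\gamma'/2\}k}=2^{-\frac k2\min\{\beta\delta,\gamma\}}$, using $\delta\le1$.

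\textbf{Step 2 (horizontal discretization).} On each slice $y_{k,\ell}$, apply Lemma \ref{l:ICspecialrate} with $N$ agents, obtaining $X^{0,k}_N$ with $\|X^{0,k}-X^{0,k}_N\|_{L^\infty}\le 2R^0/N$. Define the SPCS data by letting agent $i$ carry the velocity $u^{0,k}$ averaged over its mass interval. This makes $A^k_N$ the piecewise-linear flux \eqref{e:ASPCS} built from the $\psi^{0,k}$-averages.

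We then compare $M^k_N$ with $M^k$ using \eqref{eq:main-unidir-optimal} on the fixed atomic measure $\nu_k$, so the $W_1(\nu,\widetilde\nu)$ term vanishes. This needs $\cA_\infty\lesssim N^{-\delta}$. That bound is obtained by writing $A^k(m)-A^k_N(m)$ as an integral of $\psi^{0,k}\circ X^{0,k}-\psi^{0,k}\circ X^{0,k}_N$, plus an averaging error. This is the main obstacle: $\psi^{0,k}$ is a $\nu$-average over a cube of $\psi^0$, and $u^{0,k}$ is the associated averaged velocity, so we must show the relevant composition is Hölder-$\delta$ in $x$ up to $O(2^{-\gamma' k})$ errors, which are already absorbed in Step 1. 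We would try to handle this by comparing directly with $\psi^0(X^0(\cdot,\eta),\eta)$ for $\eta\in Q_{k,\ell}$ and integrating in $\eta$, avoiding any regularity of $X^{0,k}$.

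The square root in \eqref{eq:main-unidir-optimal} yields an error of order $N^{-\delta/2}$, and the initial term contributes $N^{-1}$. Choosing $N(k)\sim 2^{k\min\{\beta,\gamma/\delta\}}$ makes this $\lesssim 2^{-\frac k2\min\{\beta\delta,\gamma\}}$. Hmm—to reach the stated agent count $2^{\frac k2\min\{\beta,\gamma/\delta\}}$ per slice, we instead expect the linear-in-$N$ stability from the atomic-$\nu$ theory (or the $x$-Hölder structure via \eqref{e:stab-pi}-type estimates in $m$) to give $N^{-\delta}$. We would therefore try \eqref{eq:stab-pi} for this step if $A^k_N$ converges in Lipschitz seminorm, and otherwise accept the square root.

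\textbf{Step 3 (combining).} Finally, \eqref{e:Wassersteinrelationship} and \eqref{e:W1vsL1M}, together with the triangle inequality, give the $W_1$ bound. The number of slices is $O(2^{k(d-1)})$, which yields the stated total agent count.
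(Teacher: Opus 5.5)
Your Step 1 matches the paper's first step: the Jensen/\eqref{e:W1id} argument gives \eqref{e:AHoldery} with exponent $\min\{\beta\delta,\gamma\}$, and Corollary \ref{c:MktoMrate}(a) then applies. Step 3 is also fine.

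The gap is Step 2, which you leave open. You never actually prove a bound on $A^k-A^k_N$. The route you do carry to the end uses \eqref{eq:main-unidir-optimal} with $\cA_\infty\lesssim N^{-\delta}$; it loses a square root and forces $N(k)\sim 2^{k\min\{\beta,\gamma/\delta\}}$ agents per slice, which is twice the exponent in the stated count. Your fallback is essentially the paper's route. The paper assigns point values $\psi^{0,k}_{i,\ell,N}=\psi^{0,k}(x^{0,k}_{i,\ell,N},y_{k,\ell})$. It then asserts $|\partial_mA^k-\partial_mA^k_N|\le C|X^{0,k}-X^{0,k}_N|^\delta\le CN^{-\delta}$ on each mass interval, using $x$-H\"older continuity of $\psi^{0,k}$ and Lemma \ref{l:ICspecialrate}. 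This Lipschitz-seminorm bound is fed into the linear estimate, so that $N\sim 2^{\frac k2\min\{\beta,\gamma/\delta\}}$ suffices. (The text cites \eqref{eq:main-unidir-optimal} there, but only \eqref{eq:stab-pi} yields $N^{-\delta}$.)

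You are right to distrust that H\"older step, so borrowing it does not close the gap: cube averaging destroys uniform $x$-regularity of $\psi^{0,k}$. Here is an example.
\begin{itemize}
\item Take $d=2$, $\nu$ Lebesgue on $[0,1]$, $\phi$ a small constant and $u^0(x,y)=x$.
\item Let $\rho^0_y$ be $(1-\epsilon(y))\,\mathbf{1}_{[0,1]}(x)\,\dx$ plus a point mass $\epsilon(y)$ at $x=2$, with $\epsilon(y)=(1+y)/10$. Then \eqref{e:rho0Holdery} holds with $\beta=1$.
\item $X^0(\cdot,\eta)$ jumps from $1$ to $2$ at the $\eta$-dependent level $1-\epsilon(\eta)$.
\item $\psi^{0,k}(\cdot,y_{k,\ell})$ equals $\partial_mA^k(M^{0,k}(\cdot),y_{k,\ell})$ a.e.\ on the diffuse part. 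It rises by about $\frac12$ across an $x$-interval of length $O(2^{-k})$ just below $x=1$.
\item With cells of size $\sim N^{-1}\gg 2^{-k}$, some mass interval carries an $O(1)$ oscillation of $\partial_mA^k$. So $[A^k-A^k_N]_{\Lip}$ is not small, whatever velocities are assigned.
\end{itemize}
Neither the paper's pointwise bound nor your hoped-for ``H\"older up to $O(2^{-\gamma' k})$'' statement holds pointwise; only an integrated version does.

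What closes the argument is your averaged choice together with an $L^1$-in-$m$ estimate.
\begin{enumerate}
\item Set $\psi^0_i$ equal to the mean of $\partial_mA^k$ over $J_i=(\theta_{i-1},\theta_i)$, choosing $v^0_i$ accordingly.
\item For $m,m'\in J_i$, write $|X^0(m,\eta)-X^0(m',\eta)|\le|X^0(m,\eta)-X^{0,k}(m)|+4R^0/N+|X^{0,k}(m')-X^0(m',\eta)|$.
\item Use $\int_0^1|X^0(m,\eta)-X^{0,k}(m)|\,\dm=W_1(\rho^0_\eta,\rho^{0,k}_{y_{k,\ell}})\le C2^{-\beta k}$. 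Here $\rho^{0,k}_{y_{k,\ell}}$ is the $\nu$-average of $\rho^0_{\eta'}$ over $Q_{k,\ell}$, and the bound follows from convexity of $W_1$.
\item Apply Jensen and average over $\eta\in Q_{k,\ell}$. This gives $\int_0^1|\partial_m(A^k-A^k_N)|\,\dm\le C(N^{-\delta}+2^{-\beta\delta k})$.
\item The only place \eqref{eq:stab-pi} uses $[A-\widetilde A]_{\Lip}$ is \eqref{eq:BP-flux-by-parts}. There, for nondecreasing $M$, the BV chain rule dominates $|\partial_x f(M)|$, uniformly in $\widetilde M$, by a measure of total mass $\int_0^1|\partial_m(A-\widetilde A)|\,\dm$. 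Hence \eqref{eq:stab-pi} holds with this $L^1$ norm in place of the seminorm.
\end{enumerate}
This gives $\sup_t\|M^k_N-M^k\|_{L^1(\dx\otimes\nu_k)}\le C(N^{-1}+N^{-\delta}+2^{-\beta\delta k})$ with no square root. Choosing $N\sim 2^{\frac k2\min\{\beta,\gamma/\delta\}}$ then yields the stated rate and agent count.
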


\begin{proof}
	Define $M^0$ and $A$ as in Theorem \ref{t:entropicselection}.  Our assumption \eqref{e:rho0Holdery} is equivalent to \eqref{e:M0Holdery}.  Our first aim is therefore to show that \eqref{e:AHoldery} holds with the appropriate exponent $\min\{\beta\delta, \gamma\}$, so that we can apply Corollary \ref{c:MktoMrate}(a).  We estimate as follows, noting that assumption \eqref{e:u0Holder} implies that $\psi^0 = u^0 + \Phi*\rho^0$ satisfies an estimate of the same form and using \eqref{e:W1id} to pass to the third line below.
	\begin{align*}
		|A(m,y) - A(m,\widetilde{y})|
		& \le \int_0^1 \big|\psi^0(X^0(m,y),y) - \psi^0(X^0(m,\widetilde{y}), \widetilde{y})\big| \dm \\
		& \le C \int_0^1  |X^0(m,y) - X^0(m,\widetilde{y})|^\delta \dm + C|y-\widetilde{y}|^\gamma\\
		& \le C W_1(\rho^0_y, \rho^0_{\widetilde{y}})^\delta + C|y - \widetilde{y}|^\gamma \\
		& \le C |y-\widetilde{y}|^{\min\{\beta\delta, \gamma\}}
	\end{align*}
	We may thus define (for each $k\in \N$) the triple $(M^{0,k}, A^k, \nu_k)$ as  in \eqref{e:nuk}, \eqref{eq:M0k-def-existence} and \eqref{eq:Ak-def-existence}, and the entropy solution $M^k$ generated from this admissible triple will satisfy \eqref{e:MTkconvrate1}, per Corollary \ref{c:MktoMrate}(a).  For use below, we also denote by $(\rho^{0,k}, u^{0,k})$ the corresponding Euler Alignment initial data and $(\rho^k, u^k)$ the corresponding weak solution.  Finally, we define $\psi^{0,k} = u^{0,k} + \Phi*\rho^{0,k}$. 
	
	Next, we discretize in the direction of the flow. Our current assumptions do not grant us access to \eqref{e:AHolderm}  (which would have in turn allowed us to use the rate in the last statement of Theorem \ref{t:sliceexist}); we instead use the discretization provided by Lemma~\ref{l:ICspecialrate}.  Write $\nu_k = \sum_{\ell=1}^{L_k} \mu_{k,\ell} \delta(\cdot - y_{k,\ell})$, and let $(m^{k}_{i,\ell, N}, x^{0,k}_{i,\ell,N})_{i=1}^N$, $M^{0,k}_N$, and $X^{0,k}_N$ be as in Lemma \ref{l:ICspecialrate}, so that for each $k$, $\ell$, and $N$, we have
	\[
	\|M^{0,k}(\cdot, y_{k,\ell}) - M_N^{0,k}(\cdot, y_{k,\ell})\|_{L^1(\mathbb{R})}\le \frac{2R^0}{N},
	\qquad 
	\|X^{0,k}(\cdot, y_\ell) - X^{0,k}_N(\cdot, y_\ell)\|_{L^\infty(0,1]} \le \frac{2R^0}{N}.
	\]
	Define 
	\[
	\psi^{0,k}_{i,\ell, N} 
	= \psi^{0,k}(x^{0,k}_{i,\ell, N}, y_{k,\ell}),
	\qquad 
	v^{0,k}_{i,\ell, N}
	= \psi^{0,k}_{i,\ell, N} - \sum_{p = 1}^{L_k} \sum_{j=1}^N \mu_{k,p} m^k_{i,p, N} \Phi( x^{0,k}_{i,\ell, N} - x^{0,k}_{j,p, N}, y_{k,\ell} - y_{k,p}),
	\]
	\[
	\theta^k_{i,\ell, N} = \sum_{j=1}^i m^k_{j,\ell, N},\;\;i=1,\ldots, N,\qquad \theta^k_{0,\ell, N} = 0.
	\]
	Finally, let $(M^{0,k}_N, A^k_N, \nu_k)$ denote the admissible triple associated to the SPCS initial data defined above, and let $M^k_N$ denote the entropy solution generated from it.  Let $(\rho^{0,k}_N, u^{0,k}_N)$ and $(\rho^k_N, u^k_N)$ denote the associated Euler Alignment initial data and weak solution.  For $m\in (\theta^k_{i-1,\ell, N}, \theta^k_{i,\ell, N})$, we have   
	\begin{align*}
		|\partial_m A^k (m,y_{k,\ell}) - \partial_m A^k_N(m,y_{k,\ell})|
		& = |\psi^{0,k}(X^0(m,y_{k,\ell}), y_{k,\ell}) - \psi^{0,k}_{i,\ell, N}| \\
		& = |\psi^{0,k}(X^0(m,y_{k,\ell}), y_{k,\ell}) - \psi^{0,k}(X^{0,k}_N(\theta^k_{i,\ell, N}, y_{k,\ell}), y_{k,\ell})| \\
		& \le C|X^0(m,y_{k,\ell}) - X^{0,k}_N(m,y_{k,\ell})|^\delta \\
		& \le CN^{-\delta}
	\end{align*}
	This allows us to use \eqref{eq:main-unidir-optimal} to conclude that 
	\begin{equation} \sup_{t\in [0,T)} \|M^k_N(t) - M^k(t)\|_{L^1(\dx\otimes \nu_k)} \le CN^{-1} + CN^{-\delta} \le CN^{-\delta}.     
	\end{equation}
	Therefore, 
	\begin{equation}
		\sup_{t\in [0,T)} W_1(\rho(t),\rho^k_N(t)) \le \sup_{t\in [0,T)} W_1(\rho(t), \rho^k(t)) + \sup_{t\in [0,T)} W_1(\rho^k(t), \rho^k_N(t)) \le C 2^{-\frac{k}{2} \min\{\beta \delta, \gamma\}} + C N^{-\delta}    
	\end{equation}
	Choosing $N\sim 2^{\frac{k}{2} \min\{\beta, \frac{\gamma}{\delta}\}}$ finishes the proof.
\end{proof}

\section{Flocking}
\label{s:Flocking}

We now consider the long-time behavior associated with unidirectional Cucker--Smale and Euler Alignment dynamics.  We start with the former, which already contains the main novelty for our purposes.  

\subsection{Flocking for unidirectional Cucker--Smale dynamics}
It has been known for some time (c.f. \cite{HaLiu2008}) that solutions of the Cucker--Smale ODEs experience flocking and exponentially fast velocity alignment for any initial configuration, if the communication protocol $\phi:\mathbb{R}^d\to \mathbb{R}$ is even and `heavy-tailed'.  When we say that $\phi$ is `heavy-tailed', we mean that there exists a radially nonincreasing function $\underline{\phi}:[0,\infty)\to \mathbb{R}$ such that $\phi(\bm{x})\ge \underline{\phi}(|\bm{x}|)$ and $\underline{\phi}$ is nonintegrable at infinity:
\begin{equation}
	\label{e:heavytail}
	\int_1^\infty \underline{\phi} = +\infty.
\end{equation} 
In fact, the authors of \cite{HaLiu2008} already obtain a more quantitative version of the heavy-tail condition that depends on the initial configuration.   Whether \eqref{e:heavytail} or a more refined version is used, however, one can think of the underlying mechanism as follows: Initially, every agent interacts with every other agent.  The heavy tail condition ensures that the spatial decay of these interactions is just slow enough that it is impossible for one agent to completely escape the influence of any other; it becomes trapped within some fixed diameter of the rest of the flock, so that the minimal strength of all pairwise interactions remains bounded below, which drives the exponentially fast velocity alignment.  

Actually, even in Cucker and Smale's paper \cite{CS2007b}, it was shown that flocking can be guaranteed even if some agents never communicate with each other.  What is important is the connectivity of the underlying graph structure of the communication network.  In particular, flocking and exponentially fast velocity alignment is guaranteed as long as the time dependent `weighted Fiedler number' $\kappa_2$ is nonintegrable: $\int_0^\infty \kappa_2(t)\,\dt = +\infty$.  For more on this direction (and a precise definition of the weighted Fiedler number), the reader may consult \cite{CS2007b, MT2014, Tadmor2021review, ShvydkoyBook}, among many other references.  Using the Fiedler number to establish flocking, however, typically results in bounds on the flock diameter (and the rate of alignment) that depend on the number of agents $N$; as such, these techniques often do not extend well in the hydrodynamic setting.  

In the theorem below, we prove that the Cucker--Smale system experiences flocking (at a rate independent of the number of agents) even if the communication degenerates in a cylinder around the agents, provided that there is enough communication with agents outside each cylinder.  The cylinders are invariant under the unidirectional dynamics, so the  communication network is effectively frozen if one imposes an analog of the heavy-tail condition outside the cylinder of degeneracy.  If we take the radius $r$ of the cylinders to be zero, we recover a more standard type of flocking estimate, exactly in line with the quantitative version of Ha and Liu's estimate.    

\begin{THEOREM}
	\label{t:flockingCS}
	Assume $\phi$ satisfies \eqref{e:phieven} and \eqref{e:phiL1locx}. Let $(m_i, \bm{x}_i(t), \bm{v}_i(t))_{i=1}^N$ be a solution of the Cucker--Smale system \eqref{e:CS} associated to the initial data $(\bm{x}_i^0,\bm{v}_i^0)_{i=1}^N$.  Assume that $\sum_{i=1}^N m_i = 1$ and that the initial data (and therefore the solution) is unidirectional; denote $\bm{x}_i(t) = (x_i(t), y_i)$, $\bm{v}_i(t) = (v_i(t), 0)$.  Define furthermore the quantities 
	\begin{equation} 
		\label{e:DAdef}
		\mathscr{D}_1(t) = \max_{i,j} (x_i(t) - x_j(t)),
		\qquad 
		\mathscr{D}_2 = \max_{i,j} |y_i - y_j|,
		\qquad 
		\mathscr{A}(t) = \max_{i,j} (v_i(t) - v_j(t)),
	\end{equation} 
	and define $\mathscr{D}_1^0$ and $\mathscr{A}^0$ analogously in terms of the initial data.  Assume there exists $r>0$ with the following properties (here $B(y,r)$ denotes the open ball of radius $r$ in $\mathbb{R}^{d-1}$ centered at $y$):
	\begin{itemize}
		\item The proportion of the total mass contained in any two balls of radius $r$ in $\mathbb{R}^{d-1}$ is bounded away from $1$:
		\begin{equation}
			\label{e:flockingreq1}
			\sum_{\{i:y_i\in B(y,r)\cup B(z,r)\}} m_i < 1-c,
			\qquad \text{ for all } y,z\in \mathbb{R}^{d-1}
		\end{equation} 
		\item There exists a continuous function $\underline{\phi}:[0,\infty)\times [0,\infty)\to \mathbb{R}$, nonincreasing in both its arguments on $[0,\infty) \times [r,\mathscr{D}_2]$, such that $\phi(x,y)\ge \underline{\phi}(|x|,|y|)$, and $\underline{\phi}$ additionally satisfies (with the same constant $c$ as in \eqref{e:flockingreq1}):
		\begin{equation}
			\label{e:heavytailsub}
			c\int_{\mathscr{D}^0_1}^\infty \underline{\phi}(x,\mathscr{D}_2)\,\dx > \mathscr{A}^0,
		\end{equation}
	\end{itemize}
	Then the solution experiences flocking and exponentially fast alignment:  There exists $\overline{\mathscr{D}}_1>0$ such that 
	\begin{equation}
		\label{e:uniflocking}
		\sup_{t\ge 0} \mathscr{D}_1(t)\le \overline{\mathscr{D}}_1,
		\qquad 
		\mathscr{A}(t)
		\le \mathscr{A}^0 e^{-ct \underline{\phi}(\overline{\mathscr{D}}_1,\mathscr{D}_2)}.
	\end{equation}
\end{THEOREM}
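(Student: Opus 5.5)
We adapt the standard Ha--Liu Lyapunov-functional argument to the unidirectional setting, tracking the horizontal diameter $\mathscr{D}_1(t)$ and the velocity diameter $\mathscr{A}(t)$ jointly. Since all velocities are horizontal, each $y_i$ is frozen, so $\mathscr{D}_2$ is constant. Both $\mathscr{D}_1$ and $\mathscr{A}$ are Lipschitz maxima of finitely many smooth functions, so we may work with their upper Dini derivatives. The immediate relation is
\[
\tfrac{\dd}{\dt}\mathscr{D}_1(t) \le \mathscr{A}(t).
\]

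The key step is a differential inequality of the form
\begin{equation*}
\tfrac{\dd}{\dt}\mathscr{A}(t) \le -c\,\underline{\phi}(\mathscr{D}_1(t),\mathscr{D}_2)\,\mathscr{A}(t).
\end{equation*}
Fix a time $t$ and indices $i,j$ realizing $v_i-v_j=\mathscr{A}(t)$, with $v_i$ maximal and $v_j$ minimal. Then
\[
\dot v_i-\dot v_j=\sum_k m_k\big[\phi(\bm{x}_i-\bm{x}_k)(v_k-v_i)+\phi(\bm{x}_j-\bm{x}_k)(v_j-v_k)\big].
\]
Every summand is nonpositive, since $v_k\le v_i$ and $v_k\ge v_j$. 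We may therefore discard the terms with $y_k\in B(y_i,r)\cup B(y_j,r)$, which is exactly where $\phi$ may vanish. For the remaining $k$ we have $r\le|y_i-y_k|\le\mathscr{D}_2$ and $|x_i-x_k|\le\mathscr{D}_1$, and likewise for $j$. Monotonicity of $\underline{\phi}$ on $[0,\infty)\times[r,\mathscr{D}_2]$ then bounds both $\phi$-factors below by $\underline{\phi}(\mathscr{D}_1,\mathscr{D}_2)$. This gives
\[
\dot v_i-\dot v_j\le-\underline{\phi}(\mathscr{D}_1,\mathscr{D}_2)\sum_{k\ \mathrm{remaining}} m_k\,(v_i-v_j).
\]
By \eqref{e:flockingreq1}, the remaining mass exceeds $c$, which gives the claimed inequality.

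With both inequalities in hand, consider the Lyapunov functional
\[
\mathcal{L}(t)=\mathscr{A}(t)+c\int_{0}^{\mathscr{D}_1(t)}\underline{\phi}(s,\mathscr{D}_2)\,\dd s.
\]
Combining the two inequalities shows that $\mathcal{L}$ is nonincreasing. Hence
\[
c\int_{\mathscr{D}_1^0}^{\mathscr{D}_1(t)}\underline{\phi}(s,\mathscr{D}_2)\,\dd s\le\mathscr{A}^0.
\]
Define $\overline{\mathscr{D}}_1$ by $c\int_{\mathscr{D}_1^0}^{\overline{\mathscr{D}}_1}\underline{\phi}(s,\mathscr{D}_2)\,\dd s=\mathscr{A}^0$. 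Such a point exists by \eqref{e:heavytailsub} and continuity, and it yields the uniform bound $\mathscr{D}_1(t)\le\overline{\mathscr{D}}_1$. Feeding this back into the $\mathscr{A}$-inequality, with $\underline{\phi}$ nonincreasing, gives
\[
\tfrac{\dd}{\dt}\mathscr{A}\le-c\,\underline{\phi}(\overline{\mathscr{D}}_1,\mathscr{D}_2)\,\mathscr{A},
\]
and Gr\"onwall's inequality yields \eqref{e:uniflocking}.

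The main obstacle is the second step: handling the excluded near-cylinder agents rigorously, and justifying the Dini-derivative computation when the maximizing pair switches. For the latter we would use the standard fact that the upper Dini derivative of a max of $C^1$ functions is bounded by the maximum of the derivatives over active indices. A minor point is that $\underline{\phi}$ is only assumed monotone for $|y|\in[r,\mathscr{D}_2]$, which is why the excluded agents must be dropped before bounding. The continuity of $\underline{\phi}$ and the strict inequality in \eqref{e:heavytailsub} guarantee that $\overline{\mathscr{D}}_1$ is finite.
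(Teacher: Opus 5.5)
Your proposal is correct and follows essentially the same route as the paper. It uses the same pointwise bound $\dot{\mathscr{A}}\le -c\,\underline{\phi}(\mathscr{D}_1,\mathscr{D}_2)\mathscr{A}$, obtained by discarding the agents in $B(y_i,r)\cup B(y_j,r)$, and the same Lyapunov functional $\mathscr{L}$; your normalization of $\overline{\mathscr{D}}_1$ is exactly the paper's $c\int_0^{\overline{\mathscr{D}}_1}\underline{\phi}=\mathscr{L}(0)$, and your Dini-derivative bookkeeping stands in for the paper's appeal to Rademacher's theorem.

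Both arguments tacitly use $\underline{\phi}(\cdot,\mathscr{D}_2)\ge 0$, which is needed to replace the remaining mass by $c$ and to make $\mathscr{L}$ nonincreasing. It holds because a nonincreasing function that became negative somewhere would force the integral in \eqref{e:heavytailsub} to equal $-\infty$.
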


\begin{REMARK}
	Our purpose in presenting this theorem is to point out that unidirectional dynamics provide a powerful mechanism for driving flocking and velocity alignment, even when the communication protocol involves degeneracies.  Our hypotheses are almost certainly not optimal, and we fully expect that the statement could be sharpened by using chain connectivity arguments in the spirit of, e.g., \cite{MPT2019} or \cite{STtopo}.
\end{REMARK}

\begin{proof}
	Fix a time $t$ such that $\cA(t)$ is differentiable, then choose a pair $(i,j)$ such that $\mathscr{A}(t) = v_i(t) - v_j(t)$.  Then $v_k(t)-v_i(t)$ and $v_j(t) - v_k(t)$ are nonpositive for any $k$, so
	\begin{align*}
		\dot{\mathscr{A}}(t) 
		& = \dot{v_i}(t) - \dot{v}_j(t) \\
		& = -\sum_{k=1}^N m_k \phi(\bm{x}_i(t) - \bm{x}_k(t)) (v_i(t) - v_k(t))
		+ \sum_{k=1}^N m_k \phi(\bm{x}_j(t) - \bm{x}_k(t)) (v_j(t) - v_k(t)) \\
		& \le \sum_{k:y_k\notin B(y_i,r)\cup B(y_j,r)} m_k \underbrace{\phi(\bm{x}_i(t) - \bm{x}_k(t))}_{\ge \underline{\phi}(\mathscr{D}_1(t), \mathscr{D}_2)} (v_k(t) - v_i(t))
		+ m_k \underbrace{\phi(\bm{x}_j(t) - \bm{x}_k(t))}_{\ge \underline{\phi}(\mathscr{D}_1(t), \mathscr{D}_2)} (v_j(t) - v_k(t)) \\
		& \le - \Bigg( \sum_{k:y_k\notin B(y_i,r)\cup B(y_j,r)} m_k  \Bigg) \underline{\phi}(\mathscr{D}_1(t), \mathscr{D}_2)  \,(v_i(t) - v_j(t)) \\
		& \le - c \underline{\phi}(\mathscr{D}_1(t), \mathscr{D}_2) \mathscr{A}(t).
	\end{align*}
	Rademacher's Theorem implies that the Lipschitz function $\mathscr{A}$ is differentiable for a.e. $t$, so the bound established above is available globally.  From here, it is easy to see that 
	\begin{equation}
		\mathscr{L}(t) := \mathscr{A}(t) + c \int_0^{\mathscr{D}_1(t)} \underline{\phi}(x,\mathscr{D}_2)\,\dx    
	\end{equation}
	is a Lyapunov function for the dynamics.  It follows that if we choose $\overline{\mathscr{D}}_1$ such that 
	\[
	c \int_0^{\overline{\mathscr{D}}_1} \underline{\phi}(x,\mathscr{D}_2)\,\dx = \mathscr{L}(0),
	\]
	which is possible by our assumption \eqref{e:heavytailsub}, then \eqref{e:uniflocking} holds.
\end{proof}

\begin{figure}[htbp]
	
	\begin{minipage}{0.44\linewidth}
		
		\resizebox{\textwidth}{!}{%
			\begin{tikzpicture}[
				>=Latex,
				agent/.style={circle, fill=black, inner sep=1.5pt}, %
				hatch/.style={pattern=north east lines, pattern color=black!50},
				note/.style={align=left, font=\small}
				]

				\begin{scope}[xshift=0cm, scale=0.9, every node/.append style={scale=0.9}]
					
					\node[font=\bfseries] at (0, 4.4) {Support of the Communication Kernel $\phi$};

					\draw[->, thick] (-5.4, 0) -- (4.0, 0) node[right] {$x$};
					\draw[->, thick] (0, -3.8) -- (0, 3.9) node[left] {$y$};

					\fill[hatch] (-3.5, 0.6) rectangle (3.5, 3.5);
					\fill[hatch] (-3.5, -3.5) rectangle (3.5, -0.6);

					\draw[thick] (-3.5, 3.5) -- (3.5, 3.5);
					\draw[thick] (-3.5, 0.6) -- (3.5, 0.6);
					\draw[thick] (-3.5, -0.6) -- (3.5, -0.6);
					\draw[thick] (-3.5, -3.5) -- (3.5, -3.5);

					\node[right, fill=white, inner sep=1.5pt, font=\small] at (0.1, 3.5) {$\mathscr{D}_2$};
					\node[right, fill=white, inner sep=1.5pt, font=\small] at (0.1, 0.6) {$r$};
					\node[right, fill=white, inner sep=1.5pt, font=\small] at (0.1, -0.6) {$-r$};
					\node[right, fill=white, inner sep=1.5pt, font=\small] at (0.1, -3.5) {$-\mathscr{D}_2$};

					\node[fill=white, inner sep=2pt, font=\small] at (2, 2.05) {$\operatorname{supp}(\phi)$};
					\node[fill=white, inner sep=2pt, font=\small] at (2, -2.05) {$\operatorname{supp}(\phi)$};

					\draw[dotted, thick, black] (-3.5, 3.5) -- (-5.2, 3.5);
					\draw[dotted, thick, black] (-3.5, 0.6) -- (-4.2, 0.6);
					\draw[dotted, thick, black] (0, 0) -- (-5.2, 0); 
					\draw[dotted, thick, black] (-3.5, -0.6) -- (-4.2, -0.6);
					\draw[dotted, thick, black] (-3.5, -3.5) -- (-5.2, -3.5);

					\draw[<->, thick, black] (-4.9, 0) -- (-4.9, 3.5) node[midway, fill=white, inner sep=1pt] {$\mathscr{D}_2$};
					\draw[<->, thick, black] (-4.9, 0) -- (-4.9, -3.5) node[midway, fill=white, inner sep=1pt] {$\mathscr{D}_2$};
					
					\draw[<->, thick, black] (-3.9, 0) -- (-3.9, 0.6) node[midway, left] {$r$};
					\draw[<->, thick, black] (-3.9, 0) -- (-3.9, -0.6) node[midway, left] {$r$};
				\end{scope}
			\end{tikzpicture}
		}
		
	\end{minipage}\hfill
	\begin{minipage}{0.54\linewidth}
		
		\resizebox{\textwidth}{!}{%
			\begin{tikzpicture}[
				>=Latex,
				agent/.style={circle, fill=black, inner sep=1.5pt}, %
				hatch/.style={pattern=north east lines, pattern color=black!50},
				note/.style={align=left, font=\small}
				]

				\begin{scope}[xshift=12.2cm]
					
					\node[font=\bfseries] at (0, 4.5) {Communication Domain of Agent $i$};

					\draw[->, thick] (-6.7, -0.4) -- (4.6, -0.4) node[right] {$x$};
					\draw[->, thick] (0, -3.3) -- (0, 4.2) node[left] {$y$};

					\fill[hatch] (-3.5, 1.0) rectangle (4.2, 3.9);
					\fill[hatch] (-3.5, -3.1) rectangle (4.2, -0.2);

					\draw[thick] (-3.5, 3.9) -- (4.2, 3.9);
					\draw[thick] (-3.5, 1.0) -- (4.2, 1.0);
					\draw[thick] (-3.5, -0.2) -- (4.2, -0.2);
					\draw[thick] (-3.5, -3.1) -- (4.2, -3.1);

					\foreach \y in {-2.0, -1.3, -0.6, 0.0, 0.4, 0.8, 1.5} {
						\draw[gray!30, thin] (-3.5, \y) -- (4.2, \y);
					}

					\node[agent] (a1) at (-1.0, -2.0) {}; \draw[->, thick] (a1) -- ++(0.9, 0);
					\node[agent] (a2) at (0.2, -2.0) {};  \draw[->, thick] (a2) -- ++(0.3, 0);
					\node[agent] (a3) at (1.5, -2.0) {};  \draw[->, thick] (a3) -- ++(0.7, 0);

					\node[agent] (b1) at (-1.8, -1.3) {}; \draw[->, thick] (b1) -- ++(0.4, 0);
					\node[agent] (b2) at (-0.5, -1.3) {}; \draw[->, thick] (b2) -- ++(0.8, 0);
					\node[agent] (b3) at (1.0, -1.3) {};  \draw[->, thick] (b3) -- ++(0.2, 0);
					\node[agent] (b4) at (2.2, -1.3) {};  \draw[->, thick] (b4) -- ++(1.0, 0);

					\node[agent] (c1) at (-2.2, -0.6) {}; \draw[->, thick] (c1) -- ++(0.6, 0);
					\node[agent] (c2) at (-0.8, -0.6) {}; \draw[->, thick] (c2) -- ++(0.3, 0);
					\node[agent] (c3) at (0.5, -0.6) {};  \draw[->, thick] (c3) -- ++(1.1, 0); 
					\node[agent] (c4) at (2.0, -0.6) {};  \draw[->, thick] (c4) -- ++(0.5, 0);

					\node[agent] (d1) at (-2.0, 0.0) {}; \draw[->, thick] (d1) -- ++(0.8, 0);
					\node[agent] (d2) at (-0.2, 0.0) {}; \draw[->, thick] (d2) -- ++(0.4, 0);
					\node[agent] (d3) at (1.2, 0.0) {};  \draw[->, thick] (d3) -- ++(0.9, 0);
					\node[agent] (d4) at (2.8, 0.0) {};  \draw[->, thick] (d4) -- ++(0.2, 0);

					\node[agent] (e1) at (-1.5, 0.4) {};  \draw[->, thick] (e1) -- ++(0.3, 0);
					\node[agent] (ci) at (0.8, 0.4) {};   \draw[->, thick] (ci) -- ++(0.7, 0);
					\node[agent] (e3) at (3.2, 0.4) {};   \draw[->, thick] (e3) -- ++(0.5, 0);

					\draw[thick] (ci) circle (3.5pt);
					\node[above right=3pt, font=\small, fill=white, inner sep=1pt] at (ci) {agent $i$};

					\node[agent] (f1) at (-0.5, 0.8) {};  \draw[->, thick] (f1) -- ++(0.9, 0);
					\node[agent] (f2) at (2.4, 0.8) {};   \draw[->, thick] (f2) -- ++(0.4, 0);
					\node[agent] (f3) at (3.5, 0.8) {};   \draw[->, thick] (f3) -- ++(0.6, 0);

					\node[agent] (g1) at (0.5, 1.5) {};   \draw[->, thick] (g1) -- ++(0.5, 0);
					\node[agent] (g2) at (2.0, 1.5) {};   \draw[->, thick] (g2) -- ++(0.8, 0);

					\draw[dotted, thick, black] (-3.5, 3.9) -- (-6.5, 3.9);
					\draw[dotted, thick, black] (-3.5, -3.1) -- (-6.5, -3.1);
					\draw[<->, thick, black] (-6.2, 0.4) -- (-6.2, 3.9) node[midway, fill=white, inner sep=1pt] {$\mathscr{D}_2$};
					\draw[<->, thick, black] (-6.2, 0.4) -- (-6.2, -3.1) node[midway, fill=white, inner sep=1pt] {$\mathscr{D}_2$};

					\draw[dotted, thick, black] (-6.5, 0.4) -- (0.8, 0.4);

					\draw[dotted, thick, black] (-5.5, 1.5) -- (-3.5, 1.5);   
					\draw[dotted, thick, black] (-5.5, -2.0) -- (-3.5, -2.0); 
					\draw[<->, thick, black] (-5.2, -2.0) -- (-5.2, 1.5) node[midway, fill=white, inner sep=1pt] {$\mathscr{D}_2$};

					\draw[dotted, thick, black] (-3.5, 1.0) -- (-4.5, 1.0);
					\draw[dotted, thick, black] (-3.5, -0.2) -- (-4.5, -0.2);
					\draw[<->, thick, black] (-4.2, 0.4) -- (-4.2, 1.0) node[midway, left] {$r$};
					\draw[<->, thick, black] (-4.2, 0.4) -- (-4.2, -0.2) node[midway, left] {$r$};
					
				\end{scope}
				
			\end{tikzpicture}%
		}
		
	\end{minipage}
	
	\caption{A schematic depiction of the flocking paradigm implicated by Theorem \ref{t:flockingCS}.  Each agent may have an associated cylinder of degeneracy around it, inside which it does not communicate with other agents.  Flocking will still occur in this situation if the communication is strong enough outside this cylinder, as long as no two such cylinders contain all the agents. } 
	\label{fig:degenerate-slice-communication}
\end{figure}

\vspace{-3 mm}

\subsection{Flocking for the unidirectional Euler Alignment system}

\begin{THEOREM}
	\label{t:flockingEA}
	Assume $\phi$ satisfies \eqref{e:phieven} and \eqref{e:phiLip2}.  Let $\rho^0$ be a compactly supported probability measure on $\mathbb{R}^d$, i.e., $\rho^0\in \cP_c(\mathbb{R}^d)$.     Assume $u^0\in L^\infty(\rho^0)$.  Let $(\rho, u)$ be the associated solution of \eqref{e:EAuni} generated by the entropic selection principle.  Let $\rho(\dx, \dy, t) = \rho_y(\dx,t) \nu(\dy)$ be the usual disintegration with respect to the projection $(x,y)\mapsto y$. Define the quantities
	\[
	x_+(t) = \max_{y\in \supp\nu} \max \supp \rho_y(t),
	\qquad 
	x_-(t) = \min_{y\in \supp\nu} \min \supp \rho_y(t),
	\]
	\[
	u_+(t) = \inf\{v:\rho(t)(\{(x,y): u(x,y,t)>v\}) = 0\},
	\qquad 
	u_-(t) = \sup\{v:\rho(t)(\{(x,y): u(x,y,t)<v\}) = 0\},
	\]
	\[
	\mathscr{D}_1(t) =  x_+(t) - x_-(t),  
	\qquad 
	\mathscr{D}_2 = \diam\supp \nu,
	\qquad 
	\mathscr{A}(t) = u_+(t) - u_-(t).
	\]
	Define $\mathscr{D}_1^0$ and $\mathscr{A}^0$ analogously.  Assume there exists $r>0$ with the following properties:
	\begin{itemize}
		\item The proportion of the total mass contained in any two balls of radius $r$ in $\mathbb{R}^{d-1}$ is bounded away from $1$:
		\begin{equation}
			\label{e:flockingreq1cont}
			\nu(B(y,r)\cup B(z,r)) < 1-c,
			\qquad \text{ for all } y,z\in \mathbb{R}^{d-1}
		\end{equation} 
		\item There exists a continuous function $\underline{\phi}:[0,\infty)\times [0,\infty)\to \mathbb{R}$, nonincreasing in both its arguments on $[0,\infty) \times [r,\mathscr{D}_2]$, such that $\phi(x,y)\ge \underline{\phi}(|x|,|y|)$, and $\underline{\phi}$ additionally satisfies (with the same constant $c$ as in \eqref{e:flockingreq1cont}):
		\begin{equation}
			\label{e:heavytailsubcont}
			c\int_{\mathscr{D}^0_1}^\infty \underline{\phi}(x,\mathscr{D}_2)\,\dx > \mathscr{A}^0,
		\end{equation}
	\end{itemize}
	Then the solution experiences flocking and exponentially fast alignment:  There exists $\overline{\mathscr{D}}_1>0$ such that 
	\begin{equation}
		\label{e:uniflockingcont}
		\sup_{t\ge 0} \mathscr{D}_1(t)\le \overline{\mathscr{D}}_1,
		\qquad 
		\mathscr{A}(t)
		\le \mathscr{A}^0 e^{-ct \underline{\phi}(\overline{\mathscr{D}}_1,\mathscr{D}_2)}.
	\end{equation}
	Finally, denoting the average velocity of the system by $\overline{u} = \int u^0 \rho^0$, there exists $\rho_\infty\in \cP_c(\mathbb{R}^d)$ such that 
	\begin{equation}
		\label{e:strongflocking}
		W_1(\rho(\cdot + \overline{u}t, \cdot,t), \rho_\infty) \to 0,
		\qquad \text{ as } t\to +\infty.
	\end{equation}
\end{THEOREM}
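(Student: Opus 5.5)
The plan is to transfer Theorem \ref{t:flockingCS} to the continuum by approximating with sticky particle solutions, in the spirit of \cite{LeslieTan2023}. First I would check that the Lyapunov argument of Theorem \ref{t:flockingCS} survives the addition of collisions, so that it applies to the SPCS dynamics and not only to the collisionless Cucker--Smale ODEs. At a collision, velocities are replaced by mass-weighted averages within a cluster, by \eqref{e:collisioncons2}. Hence $\mathscr{A}$ cannot increase across a collision, and $\mathscr{D}_1$ is continuous. Between collisions, the computation in the proof of Theorem \ref{t:flockingCS} goes through verbatim. The only change is that sums run over slices and agents with masses $\mu_\ell m_{i,\ell}$, and the cylinder condition \eqref{e:flockingreq1} is phrased through $\nu$. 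So for SPCS data satisfying the hypotheses, we get $\mathscr{L}(t)\le\mathscr{L}(0)$ and the bounds \eqref{e:uniflocking}.

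Next, I would approximate $(\rho^0,u^0)$ by SPCS data $(\rho^0_n,u^0_n)$ as in Theorem \ref{t:SPCSconvgen}, with three properties. First, $\supp\nu_n$ should lie within a $2^{-k}$-neighborhood of $\supp\nu$. Second, $\mathscr{D}^0_{1,n}\to\mathscr{D}^0_1$ (or at least stays at most $\mathscr{D}^0_1+o(1)$). Third, the velocity range of $u^0_n$ should lie in $[u_-(0),u_+(0)]$. The third property holds because the discretization \eqref{eq:Ak-def-existence} averages $\partial_mA$, so the slopes of $A^k$ stay in the convex hull of $\psi^0$-values. However, the velocities are $\psi$ minus $\Phi*\rho$, so this needs a small error term. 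To absorb that error and the perturbation of $\mathscr{D}_2$ and $\mathscr{D}^0_1$, I would use the strict inequalities in \eqref{e:flockingreq1cont} and \eqref{e:heavytailsubcont} together with continuity of $\underline\phi$. For condition \eqref{e:flockingreq1cont}, I would shrink $r$ slightly and shrink $c$ slightly. The pushforward $\nu_k=(r_k)_\sharp\nu$ moves mass by at most $C2^{-k}$, so balls of radius $r-C2^{-k}$ for $\nu_k$ correspond to balls of radius $r$ for $\nu$. Monotonicity of $\underline\phi$ on $[r,\mathscr{D}_2]$ makes this adjustment harmless. This yields bounds on $\mathscr{D}_{1,n}(t)$ and $\mathscr{A}_n(t)$ that are uniform in $n$, with constants converging to those in \eqref{e:uniflockingcont}.

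Then I would pass to the limit. Weak-$*$ convergence $\rho_n(t)\rightharpoonup\rho(t)$ gives that $\supp\rho(t)$ lies in the limit of the sets $[x_-^n,x_+^n]\times\supp\nu_n$, which yields the bound on $\mathscr{D}_1(t)$. For $\mathscr{A}(t)$, I need essential bounds on $u(t)$ from bounds on $u_n$. I would use $\rho_nu_n\rightharpoonup\rho u$ together with the inequality $a_n\rho_n\le\rho_nu_n\le b_n\rho_n$ in the sense of measures, where the velocity interval $[a_n,b_n]$ can be centered via Galilean shifts. This passes to the limit as $a\rho\le\rho u\le b\rho$, giving $u_+-u_-\le b-a$. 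I expect this step, choosing the approximation so that all hypotheses transfer with only slight loss, to be the main technical obstacle. In particular, the velocity range of the approximating data needs care because the averaging happens at the level of $\psi$ rather than $u$.

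Finally, for \eqref{e:strongflocking}, momentum $\int u\rho$ is conserved because $\phi$ is even. Combined with exponential alignment, this gives $|u-\overline u|\le\mathscr{A}(t)$ $\rho(t)$-a.e. The shifted density $\widetilde\rho(t)=\rho(\cdot+\overline ut,\cdot,t)$ then satisfies, through the continuity equation, $W_1(\widetilde\rho(t),\widetilde\rho(s))\le\int_s^t\mathscr{A}\le C e^{-cs\underline\phi(\overline{\mathscr{D}}_1,\mathscr{D}_2)}$. So $\widetilde\rho(t)$ is Cauchy in $W_1$, and its supports stay in a fixed compact set. The limit $\rho_\infty$ therefore exists in $\cP_c(\mathbb{R}^d)$.
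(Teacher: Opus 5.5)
\textbf{Gap: transferring $\mathscr{A}^0$ to the approximants.} Your overall route matches the one the paper sketches: run the Lyapunov argument of Theorem~\ref{t:flockingCS} for SPCS dynamics, where collisions only average velocities so $\mathscr{A}$ cannot jump up, then approximate and pass to the limit. The step that fails is the transfer of $\mathscr{A}^0$ to the approximants, for the specific data you chose.

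For the cube averages \eqref{eq:M0k-def-existence}--\eqref{eq:Ak-def-existence} underlying Theorem~\ref{t:SPCSconvgen}, $\partial_m A^k(m,y_{k,\ell})$ is the $\nu$-average over $\eta\in Q_{k,\ell}$ of $\psi^0=u^0+\Phi*\rho^0$. This average is taken at the \emph{different} points $X^0(m,\eta)$. The velocity, however, is obtained by subtracting $\Phi*\rho^{0,k}$ at the \emph{single} quantile of the merged slice. The resulting discrepancy is controlled only by $\|\phi\|_{L^\infty}$ times the oscillation of $\eta\mapsto X^0(m,\eta)$ over the cube. That oscillation tends to zero only on $L^1(\nu)$-average, whereas $\mathscr{A}$ is an essential supremum and sees every cube of positive $\nu$-measure.

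Here is a concrete example. Take $d=2$, $\nu$ Lebesgue measure on $[0,1]$, $u^0\equiv 0$, and $\rho^0_\eta=\delta_0$ or $\delta_L$ according to whether $\eta$ lies in a set meeting every dyadic interval in positive but not full measure. Then $\mathscr{A}^0=0$, and the entropic solution is static. Yet each $A^k(\cdot,y_{k,\ell})$ is linear, so the two atoms of every merged slice, at $0$ and $L$, carry the same $\psi$. Their velocities therefore differ by
\[
B[\nu_k,M^{0,k}](L,y_{k,\ell})-B[\nu_k,M^{0,k}](0,y_{k,\ell})=\sum_{p}\nu(Q_{k,p})\int_0^L\phi(s,y_{k,\ell}-y_{k,p})\,\mathrm{d}s,
\]
which is bounded below uniformly in $k$ whenever $\phi>0$. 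So $\liminf_k\mathscr{A}^0_k$ exceeds $\mathscr{A}^0$ by a fixed amount. Strictness of \eqref{e:heavytailsubcont} cannot absorb this. The discrete hypothesis \eqref{e:heavytailsub} may fail, and even when it holds, your limit yields only a bound with an inflated prefactor and an enlarged $\overline{\mathscr{D}}_1$, not \eqref{e:uniflockingcont}.

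\textbf{What a repair needs.} You need approximants whose velocities are genuine convex combinations of values of $u^0$. One option is to average $\rho^0_\eta$ and $u^0\rho^0_\eta$ (rather than $M^0$ and $A$) over each cube, then over small $x$-bins as in Lemma~\ref{l:ICspecialrate}, and take Radon--Nikodym derivatives. This preserves $[u_-(0),u_+(0)]$ and $[x_-(0),x_+(0)]$ exactly. The fluxes of these data, however, are no longer the averages \eqref{eq:Ak-def-existence}. You must therefore prove separately that they converge to $A$ in $L^1(\nu;L^\infty([0,1]))$, before \eqref{eq:main-unidir-optimal} identifies their limit with the entropic solution. One plausible route is a Lebesgue-differentiation argument for $\eta\mapsto(X^0(\cdot,\eta),u^0(X^0(\cdot,\eta),\eta))$ in $L^1(0,1)^2$. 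Theorem~\ref{t:SPCSconvgen} does not supply this step.

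\textbf{Smaller point: shrinking $r$.} After you shrink $r$, monotonicity of $\underline\phi$ on $[r,\mathscr{D}_2]$ says nothing about transverse separations in $[r-C2^{-k},r)$ or slightly above $\mathscr{D}_2$. What works is the following. First use strictness of \eqref{e:heavytailsubcont} to truncate the tail integral at a finite $X$. Then use uniform continuity of $\underline\phi$ on $[0,X]\times[0,\mathscr{D}_2+1]$, or alternatively \eqref{e:phiLip2}, to get the lower bound on $\phi$ up to an additive $o(1)$.

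The collision argument, the limit passage via $a\rho\le\rho u\le b\rho$, and the $W_1$-Cauchy argument for \eqref{e:strongflocking} are fine.
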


We omit the proof of this theorem, which follows from Theorem \ref{t:flockingCS} using exactly the same procedures as was used in \cite{LeslieTan2023} to extend from the discrete setting to the continuous one.  We approximate the solution $(\rho, u)$ by SPCS dynamics and then use Theorem \ref{t:flockingCS} to control the latter.  Note that collisions cannot inhibit flocking, but rather can only enhance it (as noted in \cite{LeslieTan2023}, we have $\mathscr{A}(t)\le \mathscr{A}(t-)$ at collision times).  Therefore, Theorem \ref{t:flockingCS} applies equally well to the classical and sticky particle Cucker--Smale dynamics.

%\bibliographystyle{plain}
%\bibliography{unirefs}

\begin{thebibliography}{10}
	
	\bibitem{AmbrosioDalMaso1990}
	Luigi Ambrosio and Gianni Dal~Maso.
	\newblock A general chain rule for distributional derivatives.
	\newblock {\em Proc. Amer. Math. Soc.}, 108(3):691--702, 1990.
	
	\bibitem{ArnaizCastro2019}
	Victor Arnaiz and \'{A}ngel Castro.
	\newblock Singularity formation for the fractional {E}uler-alignment system in
	1{D}.
	\newblock {\em Trans. Amer. Math. Soc.}, 374(1):487--514, 2021.
	
	\bibitem{BianchiniDaneri2023}
	Stefano Bianchini and Sara Daneri.
	\newblock On the sticky particle solutions to the multi-dimensional
	pressureless {E}uler equations.
	\newblock {\em J. Differential Equations}, 368:173--202, 2023.
	
	\bibitem{BouchutJames1998}
	Fran\c{c}ois Bouchut and Fran\c{c}ois James.
	\newblock One-dimensional transport equations with discontinuous coefficients.
	\newblock {\em Nonlinear Anal.}, 32(7):891--933, 1998.
	
	\bibitem{BouchutJames1999}
	Fran\c{c}ois Bouchut and Fran\c{c}ois James.
	\newblock Duality solutions for pressureless gases, monotone scalar
	conservation laws, and uniqueness.
	\newblock {\em Comm. Partial Differential Equations}, 24(11-12):2173--2189,
	1999.
	
	\bibitem{BouchutPerthame1998}
	Fran\c{c}ois Bouchut and Benoit Perthame.
	\newblock Kru\v{z}kov's estimates for scalar conservation laws revisited.
	\newblock {\em Trans. Amer. Math. Soc.}, 350(7):2847--2870, 1998.
	
	\bibitem{BrenierGangboSavareWestdickenberg2013}
	Yann Brenier, Wilfrid Gangbo, Giuseppe Savar\'{e}, and Michael Westdickenberg.
	\newblock Sticky particle dynamics with interactions.
	\newblock {\em J. Math. Pures Appl.}, 99(5):577--617, 2013.
	
	\bibitem{brenier1998sticky}
	Yann Brenier and Emmanuel Grenier.
	\newblock Sticky particles and scalar conservation laws.
	\newblock {\em SIAM J. Numer. Anal}, 35(6):2317--2328, 1998.
	
	\bibitem{CCP2017}
	Jos\'{e}~A. Carrillo, Young-Pil Choi, and Sergio~P. Perez.
	\newblock A review on attractive-repulsive hydrodynamics for consensus in
	collective behavior.
	\newblock In {\em Active particles. {V}ol. 1. {A}dvances in theory, models, and
		applications}, Model. Simul. Sci. Eng. Technol., pages 259--298.
	Birkh\"auser/Springer, Cham, 2017.
	
	\bibitem{CCTT2016}
	Jos\'{e}~A. Carrillo, Young-Pil Choi, Eitan Tadmor, and Changhui Tan.
	\newblock Critical thresholds in 1{D} {E}uler equations with non-local forces.
	\newblock {\em Math. Models Methods Appl. Sci.}, 26(1):185--206, 2016.
	
	\bibitem{carrillogaltung2025}
	Jos{\'e}~A Carrillo and Sondre Galtung.
	\newblock Equivalence of entropy solutions and gradient flows for pressureless
	{1D} {E}uler systems.
	\newblock {\em arXiv preprint arXiv:2312.04932}, 2023.
	
	\bibitem{CarrilloChoiTadmor2026}
	José~A. Carrillo, Young-Pil Choi, and Eitan Tadmor.
	\newblock Lagrangian formulation and eulerian closure in alignment dynamics.
	\newblock {\em arXiv preprint arXiv:2604.10253}, 2026.
	
	\bibitem{CavallettiSedjroWestdickenberg2015}
	Fabio Cavalletti, Marc Sedjro, and Michael Westdickenberg.
	\newblock A simple proof of global existence for the 1{D} pressureless gas
	dynamics equations.
	\newblock {\em SIAM J. Math. Anal.}, 47(1):66--79, 2015.
	
	\bibitem{CS2007a}
	Felipe Cucker and Steve Smale.
	\newblock Emergent behavior in flocks.
	\newblock {\em IEEE Trans. Automat. Control}, 52(5):852--862, 2007.
	
	\bibitem{CS2007b}
	Felipe Cucker and Steve Smale.
	\newblock On the mathematics of emergence.
	\newblock {\em Jpn. J. Math.}, 2(1):197--227, 2007.
	
	\bibitem{DanchinMuchaPeszekWroblewski2018}
	Rapha\"{e}l Danchin, Piotr~B. Mucha, Jan Peszek, and Bartosz Wr\'{o}blewski.
	\newblock Regular solutions to the fractional {E}uler alignment system in the
	{B}esov spaces framework.
	\newblock {\em Math. Models Methods Appl. Sci.}, 29(1):89--119, 2019.
	
	\bibitem{DietertShvydkoy2019}
	Helge Dietert and Roman Shvydkoy.
	\newblock On {C}ucker-{S}male dynamical systems with degenerate communication.
	\newblock {\em Anal. Appl. (Singap.)}, 19(4):551--573, 2021.
	
	\bibitem{DKRT}
	Tam Do, Alexander Kiselev, Lenya Ryzhik, and Changhui Tan.
	\newblock Global regularity for the fractional {E}uler {A}lignment system.
	\newblock {\em Arch. Ration. Mech. Anal.}, 228(1):1--37, 2018.
	
	\bibitem{ERykovSinai1996}
	Weinan E, Yu~G. Rykov, and Ya~G. Sinai.
	\newblock Generalized variational principles, global weak solutions and
	behavior with random initial data for systems of conservation laws arising in
	adhesion particle dynamics.
	\newblock {\em Comm. Math. Phys.}, 177(2):349--380, 1996.
	
	\bibitem{FigalliKang2019}
	Alessio Figalli and Moon-Jin Kang.
	\newblock A rigorous derivation from the kinetic {C}ucker-{S}male model to the
	pressureless {E}uler system with nonlocal alignment.
	\newblock {\em Anal. PDE}, 12(3):843--866, 2019.
	
	\bibitem{Galtung2025}
	Sondre~Tesdal Galtung.
	\newblock The sticky particle dynamics of the 1{D} pressureless
	{E}uler-alignment system as a gradient flow.
	\newblock {\em Appl. Math. Optim.}, 91(2):Paper No. 27, 49, 2025.
	
	\bibitem{Grenier1995}
	Emmanuel Grenier.
	\newblock Existence globale pour le syst\`eme des gaz sans pression.
	\newblock {\em C. R. Acad. Sci. Paris S\'{e}r. I Math.}, 321(2):171--174, 1995.
	
	\bibitem{HaHuangWang2014wk}
	Seung-Yeal Ha, Feimin Huang, and Yi~Wang.
	\newblock A global unique solvability of entropic weak solution to the
	one-dimensional pressureless {E}uler system with a flocking dissipation.
	\newblock {\em J. Differential Equations}, 257(5):1333--1371, 2014.
	
	\bibitem{HaKimParkZhang2019}
	Seung-Yeal Ha, Jeongho Kim, Jinyeong Park, and Xiongtao Zhang.
	\newblock Complete cluster predictability of the {C}ucker-{S}male flocking
	model on the real line.
	\newblock {\em Arch. Ration. Mech. Anal.}, 231(1):319--365, 2019.
	
	\bibitem{HaLiu2008}
	Seung-Yeal Ha and Jian-Guo Liu.
	\newblock A simple proof of the {C}ucker-{S}male flocking dynamics and
	mean-field limit.
	\newblock {\em Commun. Math. Sci.}, 7(2):297--325, 2009.
	
	\bibitem{HaParkZhang2018}
	Seung-Yeal Ha, Jinyeong Park, and Xiongtao Zhang.
	\newblock A first-order reduction of the {C}ucker-{S}male model on the real
	line and its clustering dynamics.
	\newblock {\em Commun. Math. Sci.}, 16(7):1907--1931, 2018.
	
	\bibitem{HT2008}
	Seung-Yeal Ha and Eitan Tadmor.
	\newblock From particle to kinetic and hydrodynamic descriptions of flocking.
	\newblock {\em Kinet. Relat. Models}, 1(3):415--435, 2008.
	
	\bibitem{HT2016}
	Siming He and Eitan Tadmor.
	\newblock Global regularity of two-dimensional flocking hydrodynamics.
	\newblock {\em C. R. Math. Acad. Sci. Paris}, 355(7):795--805, 2017.
	
	\bibitem{HuangWang2001stickyunique}
	Feimin Huang and Zhen Wang.
	\newblock Well posedness for pressureless flow.
	\newblock {\em Comm. Math. Phys.}, 222(1):117--146, 2001.
	
	\bibitem{Hynd2019LagrangianSP}
	Ryan Hynd.
	\newblock Lagrangian coordinates for the sticky particle system.
	\newblock {\em SIAM J. Math. Anal.}, 51(5):3769--3795, 2019.
	
	\bibitem{Hynd2020probmeasureSP}
	Ryan Hynd.
	\newblock Probability measures on the path space and the sticky particle
	system.
	\newblock {\em Annali della Scuola Normale Superiore di Pisa, Classe di
		Scienze}, XXI(5):1333--1357, 2020.
	
	\bibitem{Hynd2020trajectory}
	Ryan Hynd.
	\newblock A trajectory map for the pressureless {E}uler equations.
	\newblock {\em Trans. Amer. Math. Soc.}, 373(10):6777--6815, 2020.
	
	\bibitem{KangVasseur2015}
	Moon-Jin Kang and Alexis~F. Vasseur.
	\newblock Asymptotic analysis of {V}lasov-type equations under strong local
	alignment regime.
	\newblock {\em Math. Models Methods Appl. Sci.}, 25(11):2153--2173, 2015.
	
	\bibitem{KMT2013}
	Trygve~K. Karper, Antoine Mellet, and Konstantina Trivisa.
	\newblock Existence of weak solutions to kinetic flocking models.
	\newblock {\em SIAM J. Math. Anal.}, 45(1):215--243, 2013.
	
	\bibitem{KMT2014}
	Trygve~K. Karper, Antoine Mellet, and Konstantina Trivisa.
	\newblock On strong local alignment in the kinetic {C}ucker-{S}male model.
	\newblock In {\em Hyperbolic conservation laws and related analysis with
		applications}, volume~49 of {\em Springer Proc. Math. Stat.}, pages 227--242.
	Springer, Heidelberg, 2014.
	
	\bibitem{KMT2015}
	Trygve~K. Karper, Antoine Mellet, and Konstantina Trivisa.
	\newblock Hydrodynamic limit of the kinetic {C}ucker-{S}male flocking model.
	\newblock {\em Math. Models Methods Appl. Sci.}, 25(1):131--163, 2015.
	
	\bibitem{KT}
	Alexander Kiselev and Changhui Tan.
	\newblock Global regularity for 1{D} {E}ulerian dynamics with singular
	interaction forces.
	\newblock {\em SIAM J. Math. Anal.}, 50(6):6208--6229, 2018.
	
	\bibitem{Kruzkov1970}
	S.~N. Kru\v{z}kov.
	\newblock First order quasilinear equations with several independent variables.
	\newblock {\em Mat. Sb. (N.S.)}, 81(123):228--255, 1970.
	
	\bibitem{Lear2021unising}
	Daniel Lear.
	\newblock Global existence and limiting behavior of unidirectional flocks for
	the fractional {E}uler alignment system.
	\newblock {\em SIAM J. Math. Anal.}, 55(4):3731--3754, 2023.
	
	\bibitem{LLST2020geometric}
	Daniel Lear, Trevor~M. Leslie, Roman Shvydkoy, and Eitan Tadmor.
	\newblock Geometric structure of mass concentration sets for pressureless
	{E}uler alignment systems.
	\newblock {\em Adv. Math.}, 401:Paper No. 108290, 30, 2022.
	
	\bibitem{LearShvydkoy2021}
	Daniel Lear and Roman Shvydkoy.
	\newblock Unidirectional flocks in hydrodynamic {E}uler alignment system {II}:
	{S}ingular models.
	\newblock {\em Commun. Math. Sci.}, 19(3):807--828, 2021.
	
	\bibitem{LearShvydkoy2019}
	Daniel Lear and Roman Shvydkoy.
	\newblock Existence and stability of unidirectional flocks in hydrodynamic
	{E}uler alignment systems.
	\newblock {\em Anal. PDE}, 15(1):175--196, 2022.
	
	\bibitem{Leslie2019}
	Trevor~M. Leslie.
	\newblock Weak and strong solutions to the forced fractional {E}uler alignment
	system.
	\newblock {\em Nonlinearity}, 32(1):46--87, 2019.
	
	\bibitem{L2019CTC}
	Trevor~M. Leslie.
	\newblock On the {L}agrangian trajectories for the one-dimensional {E}uler
	alignment model without vacuum velocity.
	\newblock {\em Comptes Rendus. Math\'ematique}, 358(4):421--433, 2020.
	
	\bibitem{LS2019}
	Trevor~M. Leslie and Roman Shvydkoy.
	\newblock On the structure of limiting flocks in hydrodynamic {E}uler
	{A}lignment models.
	\newblock {\em Math. Models Methods Appl. Sci.}, 29(13):2419--2431, 2019.
	
	\bibitem{LeslieTan2023}
	Trevor~M. Leslie and Changhui Tan.
	\newblock Sticky particle cucker–smale dynamics and the entropic selection
	principle for the 1d euler-alignment system.
	\newblock {\em Communications in Partial Differential Equations},
	48(5):753--791, 2023.
	
	\bibitem{LeslieTan2024}
	Trevor~M. Leslie and Changhui Tan.
	\newblock Finite- and infinite-time cluster formation for alignment dynamics on
	the real line.
	\newblock {\em J. Evol. Equ.}, 24(1):Paper No. 8, 45, 2024.
	
	\bibitem{LiMiaoTanXue2024}
	Yatao Li, Qianyun Miao, Changhui Tan, and Liutang Xue.
	\newblock Global well-posedness and refined regularity criterion for the
	uni-directional {E}uler-alignment system.
	\newblock {\em Int. Math. Res. Not. IMRN}, (23):14393--14422, 2024.
	
	\bibitem{Lucier1986}
	Bradley~J. Lucier.
	\newblock A moving mesh numerical method for hyperbolic conservation laws.
	\newblock {\em Math. Comp.}, 46(173):59--69, 1986.
	
	\bibitem{MPT2019}
	Javier Morales, Jan Peszek, and Eitan Tadmor.
	\newblock Flocking {W}ith {S}hort-{R}ange {I}nteractions.
	\newblock {\em J. Stat. Phys.}, 176(2):382--397, 2019.
	
	\bibitem{MT2014}
	Sebastien Motsch and Eitan Tadmor.
	\newblock Heterophilious dynamics enhances consensus.
	\newblock {\em SIAM Rev.}, 56(4):577--621, 2014.
	
	\bibitem{NatileSavare2009}
	Luca Natile and Giuseppe Savar\'{e}.
	\newblock A {W}asserstein approach to the one-dimensional sticky particle
	system.
	\newblock {\em SIAM J. Math. Anal.}, 41(4):1340--1365, 2009.
	
	\bibitem{nguyen2008pressureless}
	Truyen Nguyen and Adrian Tudorascu.
	\newblock Pressureless {E}uler/{E}uler--{P}oisson systems via adhesion dynamics
	and scalar conservation laws.
	\newblock {\em SIAM J. Math. Anal.}, 40(2):754--775, 2008.
	
	\bibitem{NguyenTudorascu2015}
	Truyen Nguyen and Adrian Tudorascu.
	\newblock One-dimensional pressureless gas systems with/without viscosity.
	\newblock {\em Comm. Partial Differential Equations}, 40(9):1619--1665, 2015.
	
	\bibitem{Shvydkoy2018NearlyAligned}
	Roman Shvydkoy.
	\newblock Global existence and stability of nearly aligned flocks.
	\newblock {\em Journal of Dynamics and Differential Equations}, Aug 2018.
	
	\bibitem{ShvydkoyBook}
	Roman Shvydkoy.
	\newblock {\em {D}ynamics and {A}nalysis of {A}lignment {M}odels of
		{C}ollective {B}ehavior}, volume~4 of {\em Ne\v{c}as Center Series}.
	\newblock Birkh\"{a}user Basel, 2021.
	
	\bibitem{ShvydkoySurvey}
	Roman Shvydkoy.
	\newblock Environmental averaging.
	\newblock {\em EMS Surv. Math. Sci.}, 11(2):277--413, 2024.
	
	\bibitem{ShvydkoyTadmorI}
	Roman Shvydkoy and Eitan Tadmor.
	\newblock Eulerian dynamics with a commutator forcing.
	\newblock {\em Transactions of Mathematics and Its Applications}, 1(1), 2017.
	
	\bibitem{ShvydkoyTadmorII}
	Roman {Shvydkoy} and Eitan {Tadmor}.
	\newblock Eulerian dynamics with a commutator forcing {II}: {F}locking.
	\newblock {\em Discrete Contin. Dyn. Syst.}, 37(11):5503--5520, 2017.
	
	\bibitem{ShvydkoyTadmorIII}
	Roman Shvydkoy and Eitan Tadmor.
	\newblock Eulerian dynamics with a commutator forcing {III}. {F}ractional
	diffusion of order {$0<\alpha<1$}.
	\newblock {\em Phys. D}, 376/377:131--137, 2018.
	
	\bibitem{STtopo}
	Roman Shvydkoy and Eitan Tadmor.
	\newblock Topologically based fractional diffusion and emergent dynamics with
	short-range interactions.
	\newblock {\em SIAM Journal on Mathematical Analysis}, 52(6):5792--5839, 2020.
	
	\bibitem{Tadmor2021review}
	Eitan Tadmor.
	\newblock On the mathematics of swarming: Emergent behavior in alignment
	dynamics.
	\newblock {\em Notices of the AMS}, 68(4):493--503, 2021.
	
	\bibitem{Tadmor2026}
	Eitan Tadmor.
	\newblock Decrease of entropy and emergence of order in collective dynamics.
	\newblock {\em Commun. Contemp. Math.}, 28(5):Paper No. 2540006, 2026.
	
	\bibitem{TT2014}
	Eitan Tadmor and Changhui Tan.
	\newblock Critical thresholds in flocking hydrodynamics with non-local
	alignment.
	\newblock {\em Philos. Trans. R. Soc. Lond. Ser. A Math. Phys. Eng. Sci.},
	372:20130401, 2014.
	
	\bibitem{tan2020euler}
	Changhui Tan.
	\newblock On the {E}uler-alignment system with weakly singular communication
	weights.
	\newblock {\em Nonlinearity}, 33(4):1907--1924, 2020.
	
	\bibitem{tan2021eulerian}
	Changhui Tan.
	\newblock {E}ulerian dynamics in multi-dimensions with radial symmetry.
	\newblock {\em SIAM J. Math. Anal.}, 53(3):3040--3071, 2021.
	
	\bibitem{WangDing1997}
	Zhen Wang and Xiaqi Ding.
	\newblock Uniqueness of generalized solution for the {C}auchy problem of
	transportation equations.
	\newblock {\em Acta Math. Sci. (English Ed.)}, 17(3):341--352, 1997.
	
	\bibitem{WangHuangDing1997}
	Zhen Wang, Feimin Huang, and Xiaqi Ding.
	\newblock On the {C}auchy problem of transportation equations.
	\newblock {\em Acta Math. Appl. Sinica (English Ser.)}, 13(2):113--122, 1997.
	
	\bibitem{Zeldovich1969sb}
	Ya.~B. Zeldovich.
	\newblock Gravitational instability: An approximate theory for large density
	perturbations.
	\newblock {\em Astron. Astrophys.}, 5:84--89, 1970.
	
\end{thebibliography}

\def\cprime{$'$}

\end{document}